\newif\ifPDF
\newtheorem{theorem}{Theorem}[section]
\newtheorem{lemma}[theorem]{Lemma}
\newtheorem{proposition}[theorem]{Proposition} 
\newtheorem{corollary}[theorem]{Corollary}
\newcommand{\dsum}{\displaystyle\sum}
\newcommand{\bbR}{\mathbb R}
\newcommand{\be}{\mathbf e}
\newcommand{\E}{\epsilon} 
\newcommand{\T}{\tau}
\newcommand{\Tt}{\tau_2}
\newcommand{\Tk}{\tau_k}
\newcommand{\C}{\mathcal C}
\newcommand{\R}{\mathcal R}
\renewcommand{\S}{\mathcal S}
\newcommand{\eps}{\varepsilon}
\newcommand{\red}[1]{{\color{red} #1}}
\newcommand{\green}[1]{{\color{green} #1}}
\newcommand{\td}[1]{{\red{\textbf {[#1]}}}}
\newcommand{\old}[1]{}
\renewcommand{\be}{\begin{equation}}
\newcommand{\ee}{\end{equation}}
\title{Multipodal Structure {and Phase Transitions} in Large Constrained Graphs}
\author{
Richard Kenyon\thanks{Department of Mathematics, Brown University, Providence, RI 02912; rkenyon@math.brown.edu}
\and Charles Radin\thanks{Department of Mathematics, University of Texas, Austin, TX 78712; radin@math.utexas.edu}  
\and Kui Ren \thanks{Department of Mathematics, University of Texas, Austin, TX 78712; ren@math.utexas.edu} 
\and Lorenzo Sadun\thanks{Department of Mathematics, University of Texas, Austin, TX 78712; sadun@math.utexas.edu} 
}
\begin{document}
\maketitle

\begin{abstract}
We study the asymptotics of large, simple, labeled graphs
constrained by the densities of edges and 
of $k$-star subgraphs, $k\ge 2$ fixed.
We prove that under such constraints graphs
are ``multipodal'': asymptotically in the number of vertices 
there is a partition of the
vertices into $M < \infty$ subsets $V_1, V_2, \ldots, V_M$, and a set
of well-defined probabilities $g_{ij}$ of an edge between any $v_i \in
V_i$ and $v_j \in V_j$. For $2\le k\le 30$ we
determine the phase space: the combinations of edge and $k$-star densities achievable
asymptotically. For these models there are special points on the
boundary of the phase space with nonunique asymptotic (graphon)
structure; for the 2-star model we prove 
that the nonuniqueness extends to 
entropy maximizers in the interior of the phase space.

\end{abstract}



\section{Introduction}
\label{SEC:Intro}


We study the asymptotics of large, simple, labeled graphs
constrained to have certain fixed \emph{densities} $t_j$
of subgraphs $H_j$, $1\le j\le \ell$ {(see definition below)}.
To study the asymptotics we use the graphon formalism of Lov\'asz
{\it et al} \cite{LS1, LS2,BCLSV, BCL, LS3}
and the large deviations theorem of
Chatterjee and Varadhan \cite{CV}, from which one can reduce the analysis to the
study of the graphons which maximize the entropy subject to the
density constraints,  as in the previous works \cite{RS1, RS2, RRS}.

Most of this work considers the simple cases, called \emph{$k$-star
  models}, in which $\ell=2$, $H_1$ is an edge, and $H_2$
is a ``$k$-star'': $k\ge 2$ edges with a common vertex.  For these models we prove that all graphons which
maximize the entropy, subject to any realizable values of the density constraints,
are ``multipodal'': there is a partition of the
vertices into $M < \infty$ subsets $V_1, V_2, \ldots, V_M$, and a set
of well-defined probabilities $g_{ij}$ of an edge between any $v_i \in
V_i$ and $v_j \in V_j$. In particular the optimizing graphons are
piecewise constant, attaining only finitely many values. 

For any finite set of constraining subgraphs $H_j$, $1\le j\le \ell$, one can consider the
\emph{phase space} (also called the \emph{feasible region}), 
the subset of the unit cube in $\bbR^\ell$ consisting of accumulation
points of all densities $t=(t_1,\ldots t_\ell)$ achievable by finite
graphs.  The phase space for the 2-star model, and all
graphons corresponding to (that is, with densities on) boundary points of the phase space, were derived in
\cite{AK}.  {We derive} the phase space and bounding graphons
for $k$-star models with $2\le k \le 30$.
In these models there are distinguished points on the boundary
for which 
the graphon is not unique. For the 2-star model we prove that this
nonuniqueness extends to the entropy maximizer in the {interior} of the
phase space.

Extremal graph theory, the study of the boundaries of 
the phase spaces of networks, has a
long and distinguished history; see for instance \cite{B}. Few examples have been
solved, the main ones corresponding to two constraints: edges and 
the complete graph $K_p$ on $p\ge 3$ vertices \cite{R}, which includes
the edge/triangle model discussed below. {In these examples the optimal
graphs are multipodal. However more recently \cite{LS3} examples of
`finitely forced graphons' were found: these are (typically non-multipodal)
graphons uniquely determined by
the values of (finitely many) subgraph densities}. Although we are
mainly interested in entropy-maximizing graphons for constraints in
the \emph{interior} of the phase space, where one can define
phases, (see \cite{RS1} and references therein),
these results for graphons corresponding to boundary points are clearly
relevant to our study, and will be discussed in Section \ref{finitely_forced} and the Conclusion.

The significance of multipodal entropy optimizers emerged in a series 
of three
papers \cite{RS1,RS2,RRS} on a model with constraints on edges
and triangles, rather than edges and stars. In the edge/triangle model
evidence, but not proof, was given that entropy optimizers were
$M$-podal throughout the whole of the phase space, $M$
growing without bound as the two densities approach 1. Here we {\it
  prove} that all optimizers are $M$-podal, with a uniform bound on $M$, 
in any $k$-star model.

A related but different family of models consists of 
exponential random graph models (ERGMs): see for instance \cite{N, Lov} and the many references
therein. In physics terminology the models in \cite{RS1,RS2,RRS} and
this paper are microcanonical whereas the ERGMs based on the same
subgraph densities are the corresponding grand canonical versions
or ensembles. In distinction with statistical mechanics with short
range forces \cite{Ru,TET}, here the microcanonical and grand
canonical ensembles are inequivalent \cite{RS1}:
because the relevant entropy function
is not concave on the phase space, there are large portions of the phase space where the ERGM model gives no information about
the constrained optimal graphons.
On the other hand, all information about the grand canonical
ensemble can be derived from the microcanonical ensemble.
These facts have important
implications regarding the notion of \emph{phases} in random graph models.

In the Conclusion
below we discuss this extent of the loss of information in ERGMs as
compared with microcanonical models.  Continuing the analogy with
statistical mechanics we also discuss the relevance of multipodal
structure in the study of emergent phases in all such parametric families of
large graphs, as the vertex number grows.

\section{Notation and background}
\label{SEC:notation}

Fix distinct positive integers $k_1,\dots, k_\ell,\  \ell \ge 2$, and consider simple (undirected, with
no multiple edges or loops) graphs $G$ with vertex set $V(G)$ of labeled vertices.
For each $k=k_i$, 
set $T_k(G)$ to be the set of graph homomorphisms from a $k$-star
into $G$. We assume $k_1=1$ so the $k_1$-star is an edge.
Let $n=|V(G)|$. The \emph{density} of a subgraph $H$ refers to the
relative fraction of maps from $V(H)$ into $V(G)$ which preserve edges:
the $k$-star density is 
\begin{equation}
t_{k}(G)\equiv \frac{|T_k(G)|}{n^{k+1}}.
\end{equation}

For $\alpha > 0$ and $\T=(\T_1,\dotsm\T_\ell)$ define
$\displaystyle Z^{n,\alpha}_{\T}$ to be the number of graphs on $n$ vertices with
densities
\begin{equation}
t_{k_i}(G) \in (\T_i-\alpha,\T_i+\alpha),\ 1\le i\le \ell.
\end{equation}
We sometimes denote $\T_1$ by $\E$ and $T_{1}(G)$
by $E(G)$. 

Define the \emph{(constrained) entropy density} $s_{\T}$ to be the exponential rate of growth of 
$Z^{n,\alpha}_{\T}$ as a function of $n$: 
\begin{equation}
s_{\T}=\lim_{\alpha\downarrow 0}\lim_{n\to \infty}\frac{\ln(Z^{n,\alpha}_{\T})}{ n^2}.
\end{equation}
The double limit defining the entropy density $s_{\T}$ is known to
exist \cite{RS1}. To analyze it we make
use of a variational characterization of $s_{\T}$, and 
for this we need further notation to analyze limits of graphs as
$n\to \infty$. (This work was recently
developed in \cite{LS1,LS2,BCLSV,BCL,LS3}; see also the recent book \cite{Lov}.)
The (symmetric) adjacency matrices of graphs on $n$ vertices
are
replaced, in this formalism, by symmetric, measurable functions $g:[0,1]^2\to[0,1]$; 
the former are recovered by using a partition of
$[0,1]$ into $n$ consecutive subintervals. The functions $g$ are
called graphons.

For a graphon $g$ define the \emph{degree function} $d(x)$ to be $d(x)=\int^1_0 g(x,y)dy$.
The $k$-star density of $g$, $t_k(g)$, then takes the simple form 
\begin{equation}
 t_k(g) =  \int_0^1 d(x)^k\,dx.
\end{equation}
Finally, the \emph{Shannon entropy density} (\emph{entropy} for short) 
of $g$ is
\begin{equation}
\S(g) = \frac12\int_{[0,1]^2} S[g(x,y)] \,dxdy,
\end{equation} 
where $S$ is the Shannon entropy function 
\begin{equation}\label{EQ:Shannon}
S(w) =  -w\log w -(1-w)\log(1-w).
\end{equation}

The following is a minor variant of a result in \cite{RS1}
(itself an adaption of a proof in \cite{CV}):
\begin{theorem}[The Variational Principle.]
For any feasible set $\T$ of values of the densities $t(g)$ we have 
$s_{\T} = \max [\S(g)]$, where the
maximum is over all graphons $g$ with $t(g)=\T$.
\end{theorem}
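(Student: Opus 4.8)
The plan is to deduce the statement from the Chatterjee--Varadhan large deviation principle (LDP) for dense Erd\H{o}s--R\'enyi graphs, as in \cite{CV,RS1}. The starting observation is that the uniform measure on the $2^{\binom{n}{2}}$ labeled graphs on $n$ vertices is precisely the law of $G(n,\tfrac12)$, so that
\begin{equation*}
Z^{n,\alpha}_{\T} = 2^{\binom{n}{2}}\,\bbP\!\left(\tilde G_n \in A_\alpha\right),
\end{equation*}
where $\tilde G_n$ is the graphon associated to $G(n,\tfrac12)$ and $A_\alpha=\{g:t_{k_i}(g)\in(\T_i-\alpha,\T_i+\alpha),\ 1\le i\le\ell\}$ is the (open) density window, regarded as a subset of the space of reduced graphons with the cut metric.

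First I would invoke the LDP of \cite{CV}: the laws of $\tilde G_n$ satisfy a large deviation principle in the cut metric with speed $n^2$ and good rate function $I(g)=\tfrac12\int_{[0,1]^2} h(g(x,y))\,dxdy$, where $h(u)=u\log(2u)+(1-u)\log\big(2(1-u)\big)$. The key algebraic identity is $h(u)=\log 2 - S(u)$, whence $I(g)=\tfrac{\log 2}{2}-\S(g)$. Because the homomorphism densities $g\mapsto t_{k_i}(g)=\int_0^1 d(x)^{k_i}\,dx$ are continuous in the cut metric, $A_\alpha$ is open and its closure $\overline{A_\alpha}$ is contained in the corresponding closed window. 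Applying the LDP lower bound on $A_\alpha$ and the upper bound on $\overline{A_\alpha}$, and using $\tfrac{1}{n^2}\log 2^{\binom{n}{2}}\to\tfrac{\log 2}{2}$ together with the identity for $I$, sandwiches the quantity of interest:
\begin{equation*}
\sup_{g\in A_\alpha}\S(g) \le \liminf_{n\to\infty}\frac{1}{n^2}\log Z^{n,\alpha}_{\T} \le \limsup_{n\to\infty}\frac{1}{n^2}\log Z^{n,\alpha}_{\T} \le \sup_{g\in\overline{A_\alpha}}\S(g).
\end{equation*}

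It then remains to send $\alpha\downarrow0$ and to show that both outer bounds converge to $\max\{\S(g):t(g)=\T\}$. Here I would use three facts: the space of reduced graphons is compact in the cut metric (Lov\'asz--Szegedy), the functional $\S$ is upper semicontinuous there (equivalently, $I$ is a good, hence lower semicontinuous, rate function), and each $t_{k_i}$ is cut-metric continuous. As $\alpha\downarrow0$ the sets $\overline{A_\alpha}$ decrease to $A_0=\{g:t(g)=\T\}$, so $\sup_{\overline{A_\alpha}}\S$ is nonincreasing and bounded below by $\sup_{A_0}\S$; for the reverse bound, pick near-maximizers $g_\alpha\in\overline{A_\alpha}$, extract a cut-metric convergent subsequence $g_\alpha\to g^*$ by compactness, note $t(g^*)=\T$ by continuity of $t$, and apply upper semicontinuity to get $\S(g^*)\ge\limsup_{\alpha\downarrow0}\S(g_\alpha)$. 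The same compactness and upper semicontinuity show the supremum over $A_0$ is attained, so it is a genuine maximum, and that $\sup_{A_\alpha}\S\to\max_{A_0}\S$ as well. Feasibility of $\T$ guarantees $A_0\neq\emptyset$. Since both outer bounds in the sandwich then share the limit $\max\{\S(g):t(g)=\T\}$, the double limit defining $s_{\T}$ equals this maximum.

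The main obstacle is precisely this passage to the boundary: matching the open-window lower bound against the closed-window upper bound and exchanging $\lim_{\alpha\downarrow0}$ with the supremum so as to land exactly on the constraint $t(g)=\T$, while simultaneously promoting the supremum to an attained maximum. By contrast, the identification of the counting measure with $G(n,\tfrac12)$, the entropy/rate-function identity $h=\log 2-S$, and the cut-metric continuity of the density functionals are routine; the analytic content is concentrated in the compactness of graphon space and the upper semicontinuity of $\S$, which are exactly what make the boundary passage and the attainment work.
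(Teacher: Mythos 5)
Your proposal is correct and follows essentially the same route as the paper, which does not prove the theorem in-line but explicitly attributes it to an adaptation of the Chatterjee--Varadhan large deviation principle carried out in \cite{RS1}: the identification of the counting measure with $G(n,\tfrac12)$, the rate-function identity $I(g)=\tfrac{\log 2}{2}-\S(g)$, the open/closed sandwich from the LDP, and the passage $\alpha\downarrow 0$ via compactness of reduced graphon space, cut-metric continuity of the $t_{k_i}$, and upper semicontinuity of $\S$ are precisely the ingredients of that argument. Nothing in your outline is missing or misapplied.
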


\noindent (Some authors use instead the \emph{rate function}
$I(g)\equiv-\S(g)$, and then minimize $I$.)
The existence of a maximizing 
graphon $g=g_{\T}$ for any constraint $t(g)=\T$ was proven in \cite{RS1},
again adapting a proof in \cite{CV}.
If the densities are that of one or more $k$-star subgraphs we refer
to this maximization problem as a \emph{star model}, though we
emphasize that the result applies much more generally \cite{RS1}.

We consider two graphs {\it equivalent} if they are obtained from
one another by relabeling the vertices. For graphons, the analogous operation is applying a
measure-preserving map $\psi$ of $[0,1]$ into itself, replacing $g(x,y)$ with $g(\psi(x),\psi(y))$, see \cite{Lov}. 
The equivalence classes of graphons under relabeling are called \emph{reduced graphons}, and on this space
there is a natural metric, the \emph{cut metric}, with respect to which graphons are equivalent if and only
if they have the same subgraph densities for all possible finite
subgraphs \cite{Lov}. In the remaining sections of the paper, 
to simplify the presentation we will sometimes
use a convention under which a graphon may be said to have some property
if there is a relabeling of the vertices such that it has the property.

The graphons which maximize the constrained entropy tell us what `most' or
`typical' large constrained graphs are like: if $g_{\T}$ is the only reduced graphon maximizing
$\S(g)$ with $t(g)=\T$, then as the number $n$ of vertices diverges and $\alpha_n\to
0$, exponentially most graphs with densities
$t_i(G)\in (\T_i-\alpha_n,\T_i+\alpha_n)$
will have reduced graphon close to $g_{\T}$ \cite{RS1}.
This is based on large deviations from \cite{CV}. 

\section{Multipodal Structure}
\label{SEC:multi}

\subsection{Monotonicity}
In this paper our graphons $g$ have a 
constraint on edge density, which is the integral
of $g$ over $[0,1]^2$, so we treat $g$ as a way of assigning this
conserved
quantity, which for intuitive purposes we term `mass', to the various
regions of $[0,1]^2$.


Except where otherwise indicated, in this section we restrict attention to a
$k$-star model for fixed $k\ge 2$.
Note that the values of $\T(g)$ are determined by the degree function 
$d(x)$, as $t_k = \int_0^1 d(x)^k dx$. 
We first prove a general result for graphons with constraints only on
their degree function. (See Theorem 1.1 in \cite{CDS} for a 
stronger result but with stronger hypotheses.)

\begin{theorem}\label{monotonic} If the degree function $d(x)$ is monotonic 
nondecreasing
and if the graphon $g(x,y)$ maximizes the entropy among graphons with the 
same degree function, then $g(x,y)$ is {doubly monotonic nondecreasing, that is,
outside a set of measure zero,
$g$ is monotonic nondecreasing
in each variable}.
\end{theorem}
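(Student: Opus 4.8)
The plan is to characterize the entropy maximizer through its Euler--Lagrange equation and then read off monotonicity from the Lagrange multiplier. Fix the maximizer $g$ and regard it as the solution of maximizing $\S(g)=\tfrac12\int_{[0,1]^2}S[g]$ subject to the family of linear constraints $\int_0^1 g(x,y)\,dy=d(x)$ for a.e.\ $x$, together with the pointwise bounds $0\le g\le1$. Since $S$ is strictly concave, $\S$ is strictly concave on the convex constraint set, so the maximizer is unique. Because each constraint is linear and $g$ is symmetric, the admissible symmetric first-order variations (at interior values) are those $h$ with vanishing row integrals, $\int_0^1 h(x,y)\,dy=0$ for a.e.\ $x$. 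First I would show that at the maximizer the gradient $S'[g]$ is $L^2$-orthogonal to every such $h$.

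The crux is a separability lemma: a symmetric $f\in L^2([0,1]^2)$ is orthogonal to all symmetric $h$ with zero row integrals if and only if $f(x,y)=\mu(x)+\mu(y)$ for some $\mu\in L^2[0,1]$, unique up to an additive constant. I would prove this by expanding in a product basis $\{e_i\otimes e_j\}$ with $e_0\equiv1$: the symmetric functions with zero row integrals are exactly those with no $e_0$ factor, so their orthogonal complement inside the symmetric functions is spanned by $e_0\otimes e_0$ and $e_i\otimes e_0+e_0\otimes e_i$, i.e.\ the additively separable symmetric functions. Equivalently, this says the mixed second difference of $S'[g]$ around every rectangle $(x_1,x_2)\times(y_1,y_2)$ vanishes, which one can also obtain directly from four-point loop perturbations $\pm\eps$ that preserve all row and column sums. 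Applying the lemma gives $S'[g(x,y)]=\mu(x)+\mu(y)$; since $S'(w)=\log\frac{1-w}{w}$ is a strictly decreasing bijection onto $\bbR$ with inverse $\phi(t)=1/(1+e^{t})$, this yields the explicit form $g(x,y)=\phi(\mu(x)+\mu(y))$ with $\phi$ strictly decreasing.

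It then remains to transfer monotonicity from $d$ to $\mu$ and back to $g$. Because $\phi$ is strictly decreasing, the map $s\mapsto\int_0^1\phi(s+\mu(y))\,dy$ is strictly decreasing, and it equals $d(x)$ at $s=\mu(x)$; hence $\mu(x)$ is a strictly decreasing function of $d(x)$. Since $d$ is nondecreasing in $x$ by hypothesis, $\mu$ is nonincreasing in $x$. Finally, for fixed $y$ the quantity $\mu(x)+\mu(y)$ is nonincreasing in $x$, so $g(x,y)=\phi(\mu(x)+\mu(y))$ is nondecreasing in $x$; by symmetry it is nondecreasing in $y$ as well, which is the assertion, with the monotone representative $\phi(\mu(x)+\mu(y))$ agreeing with $g$ off a null set.

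The step I expect to cause the most trouble is the rigorous justification of the Euler--Lagrange equation near the boundary values $g\in\{0,1\}$, where $S'$ blows up and only one-sided variations are admissible, so $S'[g]$ need not lie in $L^2$. The clean resolution is to show first that the maximizer satisfies $0<g<1$ a.e.\ wherever $0<d(x)<1$---intuitively because the infinite slope of $S$ at the endpoints makes it entropically costly to saturate the bounds---so that the interior equation holds globally; on any exceptional set one lets $\mu$ take the values $\pm\infty$ (with $\phi(+\infty)=0$, $\phi(-\infty)=1$), which preserves both the separable form and the monotonicity argument. Verifying this interior property, and confirming that the loop perturbations genuinely span the admissible variations in the measurable (rather than finite-dimensional) setting, are the technical points that need care.
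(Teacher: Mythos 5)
Your route---treating the degree constraint by convex duality, proving a separability lemma for the orthogonal complement of the zero-row-integral variations, and reading monotonicity off the representation $g(x,y)=\phi(\mu(x)+\mu(y))$---is genuinely different from the paper's proof, which never writes down an Euler--Lagrange equation at all: the paper constructs an explicit degree-preserving flow (built from the quantities $\eta(a,b)$, $\kappa(a,b)$ that measure failures of monotonicity) and shows, using only the concavity of $S$, that the flow strictly increases $\S$ unless $g$ is already doubly monotone. Your argument is correct and clean on the set where the maximizer stays strictly inside $(0,1)$, and in that regime it essentially recovers Theorem 1.1 of \cite{CDS}---which the paper pointedly describes as ``a stronger result but with stronger hypotheses.'' Those stronger hypotheses are exactly what your plan needs and cannot supply.

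The genuine gap is the step you flagged, and your proposed resolution of it is false: it is not true that the maximizer satisfies $0<g<1$ a.e.\ wherever $0<d(x)<1$. Take $d(x)=0$ on $[0,1-c]$ and $d(x)=c$ on $(1-c,1]$ with $0<c<1$; this is monotone nondecreasing. Any graphon with this degree function must vanish a.e.\ on $[0,1-c]\times[0,1]$ (and its reflection), and the constraint then forces $g=1$ a.e.\ on $(1-c,1]^2$, so the unique---hence entropy-maximizing---graphon is $\{0,1\}$-valued even though $d\in(0,1)$ on a set of positive measure. On such saturation sets the first-order condition is only a one-sided (KKT) inequality, $S'(g)$ is not an $L^2$ function (indeed not finite), so your orthogonality/separability lemma does not apply; and the fallback of letting $\mu=\pm\infty$ is not derived from any variational identity and hits an $\infty-\infty$ ambiguity exactly where it matters: in the example above $\mu$ would have to be $+\infty$ on $[0,1-c]$ and $-\infty$ on $(1-c,1]$, leaving $\phi\bigl(\mu(x)+\mu(y)\bigr)$ undefined on the mixed region where you need $g=0$. (A smaller wrinkle of the same kind: even when $g\in(0,1)$ a.e., nothing guarantees $S'(g)\in L^2$, so the pairing in your lemma needs justification.) To complete your approach you would have to formulate the Euler--Lagrange conditions as inequalities on the saturation set and show that these inequalities, combined with the equality on the interior set, still force double monotonicity---a substantial missing piece, and precisely the difficulty the paper's rearrangement flow is engineered to bypass.
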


Before providing a rigorous proof, consider the following heuristic. 
Suppose 
that $d(x)$ is monotonically nondecreasing and that $g(x_1,y_1) >g(x_2,y_1)$ 
for some $x_1<x_2$. Since $d(x_1)\le d(x_2)$, 
there must be some other value of $y$,
say $y_2$, such that $g(x_1,y_2) < g(x_2,y_2)$. But then moving mass from 
$(x_1,y_1)$ to $(x_2,y_1)$ and moving the same amount of mass from 
$(x_2,y_2)$ to $(x_1,y_2)$
(and likewise moving mass from $(y_1,x_1)$ to $(y_1,x_2)$ and from 
$(y_2,x_2)$ to $(y_2,x_1)$ to preserve symmetry)
will increase the entropy $\S$ while leaving the degree function $d(x)$ fixed.
The problem with this heuristic is that mass is distributed continuously, 
so we cannot speak of mass ``at a point''. Instead, we must smear out the
changes over sets of positive measure. The following proof is essentially
the above argument, averaged over all possible data $(x_1, x_2, y_1, y_2)$.

\begin{proof} 
For $0<a,b<1$, define
\be \eta(a,b)  =  \int_0^1 \max(g(a,y)-g(b,y),0)\,dy
\ee
and 
$\kappa(a,b)  = \eta(b,a).$
Note that if $b\ge a$ then $\kappa(a,b)-\eta(a,b) = d(b)-d(a) \ge 0$,
and that $g(x,y)$ being doubly monotonic almost everywhere is 
equivalent to $\eta(a,b)$ being almost everywhere zero.
For $b\ge a$ let 
\begin{equation}
\gamma(y,a,b) = \begin{cases} \eta(a,b), & g(b,y) \ge g(a,y) \cr 
\kappa(a,b), & g(b,y) < g(a,y) \end{cases},
\end{equation}
and for $a > b$ let $\gamma(y,a,b) = \gamma(y,b,a)$.
If $g(x,y)$ is doubly monotonic, then $g(b,y) \ge g(a,y)$ for $b \ge a$,
so $\gamma$ is equal to $\eta$ almost everywhere, which is equal to
zero almost everywhere. 

Now evolve the graphon $g$ according to the following integro-differential equation:
\begin{equation} \label{monotonicity-DE} \frac{d\phantom{t}}{dt}g_t(x,y) = \int_0^1  \gamma_t(y,x,b)[g_t(b,y)-g_t(x,y)]\,db
+ \int_0^1  \gamma_t(x,y,b) [g_t(x,b)-g_t(x,y)]\,db.
\end{equation}
Existence and uniqueness of solutions to this equation is straightforward. 
Working in the $L^\infty$ norm, 
the Picard iteration converges to a classical solution, i.e., a family $g_t$ of measurable functions $[0,1]^2 \to [0,1]$
that are pointwise differentiable with respect to $t$.
If $g(x,y)$ is doubly monotonic almost everywhere, 
then equation (\ref{monotonicity-DE}) simplifies to ${d g_t}/{dt}  = 0$. 
Otherwise, we will show that the flow preserves the degree function $d(x)$ and 
increases the entropy $\S(g)$, contradicting the assumption that $g$ is 
an entropy maximizer. 

To see that the degree function is unchanged we compute
\begin{eqnarray}
\frac{d\phantom{t}}{dt} d_t(x) & = & \int_0^1  \frac{d\phantom{t}}{dt}g_t(x,y)
\,dy \cr 
& = & \int_0^1\int_0^1  \gamma_t(y,x,b)[g_t(b,y)-g_t(x,y)] \,dy\,db \cr
&& + \int_0^1\int_0^1 \gamma_t(x,y,b)[g_t(x,b)-g_t(x,y)]\, dy\, db
\end{eqnarray}
The second integrand is anti-symmetric in $b$ and $y$, and so integrates to
zero. For the first integral, fix a value of $b$ and write
\begin{equation}
g_t(b,y)-g_t(x,y) = \max(g_t(b,y)-g_t(x,y),0) - \max(g_t(x,y)-g_t(b,y),0).
\end{equation}

{
Suppose for the moment that $b>x$.  
Then $\gamma(y,x,b) = \eta(x,b)$ if $g(b,y) > g(x,y)$, so the
integral of
$\gamma(y,x,b) (g(b,y)-g(x,y) dy$
over the values of $y$ where $g(b,y)>g(x,y)$ is the same as the integral of
$\eta(x,b) (g(b,y)-g(x,y)) dy$ over those same values,
namely
$\eta(x,b) \kappa(x,b).$
Since $\gamma(y,x,b) = \kappa(x,b)$ if $g(b,y) < g(x,y)$, the integral of
$\gamma(y,x,b) (g(b,y)-g(x,y) dy$
over the values of $y$ where $g(b,y)<g(x,y)$ is the same as the integral of
$\kappa(x,b) (g(b,y)-g(x,y)) dy$ over those values, namely
$-\kappa(x,b) \eta(x,b)$. Adding these together, the integral from $0$ to $1$ is
zero.

On the other hand, if $x>b$, then $\gamma(y,x,b)=\gamma(y,b,x)$, which is
$\eta(b,x)$ if $g(x,y)>g(b,y)$, and is $\kappa(b,x)$ if $g(x,y)<g(b,y)$. Once
again we divide the region of integration for $y$ into two zones depending
on which inequality applies, and contributions of the two regions cancel. Since
the inner integral (over $y$) gives zero for all values of $b$, the double
integral over $b$ and $y$ is zero.
}

To see that $\S(g_t)$ is increasing, we compute
\begin{eqnarray} \frac{d\phantom{t}}{dt}\S(g_t) & = & \iint S'(g_t(x,y)) \frac{d\phantom{t}}{dt}g_t(x,y)\, dx\,dy \cr 
& = & \iint S'(g_t(x,y))\, dx\, dy \int_0^1  \gamma_t(y,x,b)[g_t(b,y)-g_t(x,y)] \cr
&  & \hskip1truein + \gamma_t(x,y,b)[g_t(x,b)-g_t(x,y)] \,db\cr 
& = & 2 \iint S'(g_t(x,y))\, dx\, dy \int \gamma_t(x,y,b)[g_t(x,b)-g_t(x,y)] \,db\cr 
& = & \iiint \gamma_t(x,y,b)[g_t(x,b)-g_t(x,y)][S'(g_t(x,y))-S'(g_t(x,b))]\, dx\,dy\,db
\end{eqnarray}
However, $[g_t(x,b)-g_t(x,y)][S'(g_t(x,y))-S'(g_t(x,b))]$ is strictly positive
when $g_t(x,b) \ne g_t(x,y)$, thanks to the {concavity} of the function $S$. 
So ${d \S(g_t)}/{dt}$ is non-negative, and is strictly positive unless $\gamma_t(x,y,b)$
is zero for (almost) all triples $(x,y,b)$ for which $g_t(x,b) \ne g_t(x,y)$. 
Since the vanishing of $\gamma_t$ is equivalent to the double monotonicity of 
$g_t$, a maximizing $g$ must be doubly monotonic. 
\end{proof}

If $g(x,y)$ is doubly monotonic almost everywhere, we can adjust it on 
a set of measure zero to be doubly monotonic everywhere. Just take 
the adjusted value $\tilde g(a,b)$ 
to be the essential supremum of $g(x,y)$ over all $(x,y)$
with $x<a$ and $y<b$. Since changes over sets of measure zero have no effect on the integrals of $g$, we can therefore assume hereafter that $g(x,y)$ is doubly monotonic whenever $g$ is an entropy maximizer.

We previously defined multipodality in terms of graphs. Here we rephrase the
definition directly in terms of graphons. A graphon $g$ is $M$-podal if the interval $[0,1]$ can be 
split into $M$ regions, called ``clusters'',  such that the value of $g(x,y)$ depends only on which cluster $x$ is in and 
which cluster $y$ is in. 
After applying a measure-preserving transformation of $[0,1]$, we can assume that the clusters are consecutive intervals
$I_1, \ldots, I_M$. The graphon $g$, viewed as a function of two variables, then resembles 
a checkerboard, being constant on each rectangle $I_i \times I_j$.

As a first corollary to this theorem, we have the following result
generalizing slightly a result of \cite{CDS}: 

\begin{proposition}\label{CDSprop}
If $g$ maximizes the entropy among graphons with the 
same degree function, and if the degree function $d$ takes only $M$ values, then $g$ is $M$-podal.
\end{proposition}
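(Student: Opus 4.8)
The plan is to leverage Theorem~\ref{monotonic} directly. Since $g$ maximizes entropy among graphons with a fixed degree function $d$, and since relabeling vertices (applying a measure-preserving map of $[0,1]$) does not change the entropy, I am free to reorder the vertices so that $d$ is monotonic nondecreasing. Concretely, I would first argue that one can choose a measure-preserving rearrangement $\psi$ of $[0,1]$ such that $d\circ\psi$ is nondecreasing; this is the standard monotone rearrangement of a measurable function, and it sends the level sets of $d$ to subintervals of $[0,1]$. Under this relabeling $g$ still maximizes entropy among graphons with degree function $d\circ\psi$, so by Theorem~\ref{monotonic} the rearranged graphon is doubly monotonic nondecreasing, and (by the remark following the theorem) we may take it to be doubly monotonic everywhere.

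Next I would use the hypothesis that $d$ takes only $M$ values. After rearrangement, the nondecreasing function $d\circ\psi$ is a step function: it partitions $[0,1]$ into at most $M$ consecutive intervals $V_1,\dots,V_M$ (the preimages of the $M$ values, now arranged as subintervals), on each of which the degree is constant. I must then show that $g$ is constant on each block $V_i\times V_j$. The key observation is that within any single block, $x$ ranges over an interval on which $d$ is constant, so the inequality $\kappa(a,b)-\eta(a,b)=d(b)-d(a)=0$ forces $\eta(a,b)=\kappa(a,b)$ for $a,b$ in the same block; combined with double monotonicity this pins down the behavior. More directly: fix $i$ and take $a,b$ in the same interval $V_i$ with $a<b$. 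Double monotonicity gives $g(a,y)\le g(b,y)$ for almost every $y$, so $\eta(a,b)=0$, whence $\kappa(a,b)=d(b)-d(a)=0$ as well; thus $g(a,y)=g(b,y)$ for almost every $y$. Hence $g$ depends on its first argument only through which block it lies in, and by symmetry the same holds for the second argument, so $g$ is constant on each $V_i\times V_j$. Assigning $g_{ij}$ to be this constant value exhibits $g$ as $M$-podal.

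The main obstacle I anticipate is purely bookkeeping rather than conceptual: making precise the interaction between the almost-everywhere statements and the block structure. Specifically, after fixing double monotonicity on a measure-zero-modified representative, I need the equality $g(a,y)=g(b,y)$ to hold simultaneously (for almost every $y$) across a positive-measure set of pairs $(a,b)$ within each block, so that one genuinely concludes $g$ is constant on the product block and not merely ``almost monotone with zero jump.'' This is handled by a Fubini argument: the integrated quantity $\int_{V_i}\eta(a,b)\,da=0$ forces $\eta(a,b)=0$ for almost every $a$, and monotonicity in the second variable then propagates constancy to all of $V_j$. I would also note the harmless identification of the $M$ level sets of $d$ with genuine subintervals, which is exactly what the monotone rearrangement provides, so the partition $V_1,\dots,V_M$ is well defined. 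Once these measure-theoretic points are dispatched, the $M$-podality is immediate.
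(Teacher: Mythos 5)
Your proof is correct and follows essentially the same route as the paper: rearrange vertices so that $d$ is monotone, invoke Theorem~\ref{monotonic} to get a doubly monotonic maximizer, and then use constancy of $d$ on each level-set interval to force $g$ to be constant in each variable on the corresponding blocks. The paper compresses the last step into a one-line contradiction (a strict increase of $g$ in one slice would require a strict decrease in another slice to preserve the degree, contradicting monotonicity), whereas you phrase the same fact through the quantities $\eta$ and $\kappa$ from the proof of Theorem~\ref{monotonic}; the content is identical.
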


\begin{proof} We first apply a measure-preserving bijection of $[0,1]$ 
to make $d(x)$ monotone increasing. If $d$ is
  constant on some interval $[x_1,x_2]$ then we claim that for any $y$, $g(x,y)$ is
  also constant for $x\in[x_1,x_2]$, since if $g(x_2,y)>g(x_1,y)$ then there
  would have to be some $y'$ such that $g(x_2,y')<g(x_1,y')$ to assure
  that $d(x_1)=d(x_2)$, contradicting the monotonicity of $g$.
\end{proof}

As a second corollary, we obtain a strong continuity result:
\begin{proposition}If $g$ maximizes the entropy among graphons with a 
given degree function, then $g$ is continuous almost everywhere. 
\end{proposition}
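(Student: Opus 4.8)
The plan is to reduce everything to the monotone structure already established. After relabeling the vertices so that the degree function $d$ is monotone nondecreasing (which changes neither the entropy nor, under the convention stated above, the property of being continuous almost everywhere, exactly as in the proof of Proposition~\ref{CDSprop}), Theorem~\ref{monotonic} together with the adjustment described immediately after it lets me assume that $g$ itself is doubly monotonic nondecreasing everywhere on $[0,1]^2$. It then suffices to prove the purely analytic fact that a bounded function on $[0,1]^2$ which is nondecreasing in each variable separately is continuous at almost every point.

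First I would pin down continuity at an interior point $(x_0,y_0)$. By monotonicity the two diagonal limits
\[
A(x_0,y_0)=\lim_{\eps\downarrow 0} g(x_0+\eps,\,y_0+\eps)=\inf_{x>x_0,\,y>y_0} g(x,y),\qquad D(x_0,y_0)=\lim_{\eps\downarrow 0} g(x_0-\eps,\,y_0-\eps)=\sup_{x<x_0,\,y<y_0} g(x,y)
\]
both exist and satisfy $D(x_0,y_0)\le g(x_0,y_0)\le A(x_0,y_0)$. The key point is that, again by separate monotonicity, the value of $g$ on each of the four open quadrants meeting at $(x_0,y_0)$ lies between $D$ and $A$, and on the upper-right (resp.\ lower-left) quadrant it tends to $A$ (resp.\ $D$); hence $\limsup_{(x,y)\to(x_0,y_0)} g = A(x_0,y_0)$ and $\liminf_{(x,y)\to(x_0,y_0)} g = D(x_0,y_0)$. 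Thus the oscillation of $g$ at $(x_0,y_0)$ equals $A(x_0,y_0)-D(x_0,y_0)$, and $g$ is continuous at $(x_0,y_0)$ precisely when $A(x_0,y_0)=D(x_0,y_0)$. This target is strictly stronger than separate continuity in each variable, which is exactly why one cannot merely invoke the one-dimensional fact that a monotone function has at most countably many jumps.

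The final step is to show $A=D$ almost everywhere, and for this I would show that each of $A$ and $D$ agrees with $g$ almost everywhere. Fix $\delta>0$. Since $0\le g\le 1$, the change of variables $u=x+\eps,\ v=y+\eps$ gives $\iint_{(0,1-\delta)^2} g(x+\eps,y+\eps)\,dx\,dy=\iint_{(\eps,1-\delta+\eps)^2} g\,du\,dv \to \iint_{(0,1-\delta)^2} g$ as $\eps\downarrow 0$, the error being bounded by the measure of the symmetric difference of the two squares. On the other hand $g(x+\eps,y+\eps)\downarrow A(x,y)$ pointwise, so by dominated convergence the same integral tends to $\iint_{(0,1-\delta)^2} A$. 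Hence $\iint_{(0,1-\delta)^2}(A-g)=0$; since $A\ge g$ this forces $A=g$ a.e.\ on $(0,1-\delta)^2$, and letting $\delta\downarrow 0$ gives $A=g$ a.e.\ on $(0,1)^2$. The identical argument with the opposite diagonal translation gives $D=g$ a.e. Therefore $A=D$ a.e., the oscillation vanishes a.e., and $g$ is continuous almost everywhere.

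I expect the main obstacle to be the second paragraph rather than the computation: one must keep in mind that joint (not merely separate) continuity is the correct target, and the crux is the monotonicity argument identifying the genuine two-dimensional $\limsup$ and $\liminf$ of $g$ at a point with the diagonal corner limits $A$ and $D$. Once that identification is secured, the diagonal-translation integral trick makes the almost-everywhere equality $A=D=g$ routine.
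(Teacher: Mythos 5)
Your proof is correct, and it reaches the paper's conclusion by a genuinely different route in its measure-theoretic core. Both you and the paper first reduce, via Theorem \ref{monotonic} and the null-set adjustment following it, to showing that a doubly monotonic function on $[0,1]^2$ is continuous almost everywhere, and both finish with the same sandwich observation: double monotonicity upgrades agreement of the two diagonal corner limits at a point to full joint continuity there. Where you diverge is in proving that those corner limits agree almost everywhere. The paper restricts $g$ to each line $y=x+c$ of slope one, invokes the fact that a bounded monotone function of one variable has at most countably many jump discontinuities, and applies Fubini's theorem to conclude that $g$ is continuous in the $(1,1)$ direction at almost every point. You instead introduce the corner functions $A$ and $D$ and prove $A=g=D$ a.e.\ from the translation identity $\iint g(x+\epsilon,y+\epsilon)\,dx\,dy \to \iint g\,dx\,dy$ combined with dominated convergence and the pointwise inequalities $D \le g \le A$. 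Your argument is more self-contained (no Fubini, no appeal to the countability of jumps of monotone functions), at the cost of being somewhat longer; the paper's is quicker given those standard one-dimensional facts.

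One step needs tightening. Your assertion that $g$ lies between $D(x_0,y_0)$ and $A(x_0,y_0)$ on each of the four open quadrants at $(x_0,y_0)$ is false as stated: take $g$ independent of $x$, equal to the indicator of $\{y\ge y_1\}$ with $y_1>y_0$; then $A(x_0,y_0)=D(x_0,y_0)=0$, yet $g=1$ on part of the open upper-left quadrant. What is true, and all that your identification $\limsup g = A$, $\liminf g = D$ actually requires, is the local version: given $\eta>0$, choose $(u,v)$ in the open lower-left quadrant with $g(u,v)>D(x_0,y_0)-\eta$; then monotonicity gives $g>D(x_0,y_0)-\eta$ on the set $\{x>u\}\cap\{y>v\}$, which is a neighborhood of $(x_0,y_0)$, and symmetrically $g<A(x_0,y_0)+\eta$ on a neighborhood obtained from a point in the open upper-right quadrant. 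Together with the diagonal approaches this yields the oscillation formula, and nothing downstream of it changes. This repaired sandwich is, in effect, exactly the final step of the paper's own proof.
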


\begin{proof}
We may assume that $g$ is doubly monotonic. This implies that $g$ is monotonic
(and of course bounded) on any line $y=x+c$ of slope 1. $dg$ is then a bounded
measure on this line, whose discrete part is supported on a countable number of 
points. In other words, $g(x,x+c)$ can only have a countable number of jump
discontinuities as a function of $x$, and is otherwise continuous. By Fubini's
theorem, $g(x,y)$ must then be continuous in the $(1,1)$ direction for
almost all $(x,y)$. But if 
$g$ is continuous in the $(1,1)$ direction at a point $(a,b)$, then for each
$\epsilon>0$ we can find a $\delta>0$ such that
$g(a+\delta,b+\delta)$ and $g(a-\delta,b-\delta)$ are both within $\epsilon$
of $g(a,b)$. But since $g$ is doubly monotonic, $g(a-\delta,b-\delta)\le
g(x,y) \le g(a+\delta,b+\delta)$ for all $x \in (a-\delta, a+\delta)$ and 
all $y \in (b-\delta, b+\delta)$, so $g$ is continuous at $(a,b)$. 
\end{proof}

The upshot of this proposition is that the value of $g$ at a generic point
$(a,b)$, and the functional derivatives $\delta \S/\delta g$ and
$\delta t_k/\delta g$
at $(a,b)$, control the values of these functions in
a neighborhood of $(a,b)$.
(By the functional derivative $\delta t_k/\delta g$ 
we mean the function such that 
$\int \delta t_k/\delta g(x,y)\ \delta g(x,y)\, dxdy$ 
is the
linear term in the expansion of $t_k(g+\delta g)$.)
We can therefore do functional calculus computations
at points $(a,b)$ and $(c,d)$, and then speak of moving mass from a 
neighborhood of $(a,b)$ to a neighborhood of $(c,d)$. In other words, once
we have (almost everywhere) continuity, {informal arguments such as those 
preceding} the proof of Theorem \ref{monotonic} can be
used directly. 

To be more precise, let $\rho_1$ and $\rho_2$ be symmetric bump functions on $[0,1]^2$ each of total integral 1, 
with $\rho_1$ supported on small neighborhoods of 
$(a,b)$ and $(b,a)$, and with $\rho_2$ supported on small neighborhoods of $(c,d)$ and $(d,c)$. When we speak of moving mass $\epsilon$ from $(a,b)$ to $(c,d)$, we
mean changing $g(x,y)$ to $g'(x,y) = g(x,y) + \epsilon \rho_2(x,y) - \epsilon \rho_1(x,y)$ at each point $(x,y)$. 
As long as $g(x,y)$ is continuous at $(a,b)$ and $(c,d)$ and is neither 0 nor 1 there, this brings about the following change to the entropy:
\begin{equation}
\S(g')-\S(g) = \epsilon \iint \frac{\delta \S}{\delta g(x,y)} \big (\rho_2(x,y)-\rho_1(x,y)\big ) dx\, dy +O(\epsilon^2) \approx 
\epsilon \left [\frac{\delta \S}{\delta g(c,d)}
- \frac{\delta \S}{\delta g(a,b)}\right ].
\end{equation}
If $\frac{\delta \S}{\delta g(c,d)} > \frac{\delta \S}{\delta g(a,b)}$, then we can always 
pick the supports of $\rho_1$ and $\rho_2$ small enough, and the value of $\epsilon$ small enough, that the resulting 
change in entropy is positive.  Similar considerations apply to the $k$-star density $t_k$.

\subsection{$M$-podality}
Here is our main theorem. 

\begin{theorem} \label{maintheorem} For the $k$-star model, any
  graphon $g$ which maximizes the entropy $\S(g)$ and is constrained
  by $t(g)=\T$, is $M$-podal for some $M<\infty$.
\end{theorem}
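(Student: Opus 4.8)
The plan is to combine the structure theorems already proved with a bootstrap argument via the Euler--Lagrange (variational) equations, showing that the degree function $d(x)$ can take only finitely many values; Proposition~\ref{CDSprop} then immediately gives $M$-podality. First I would normalize $g$ to be doubly monotonic (using Theorem~\ref{monotonic} and the remark following it), so that $d(x)$ is monotone nondecreasing and $g$ is continuous almost everywhere. The key device is to write down the first-order optimality condition for the constrained maximization of $\S(g)$ subject to the fixed values of edge density and $k$-star density. Introducing Lagrange multipliers $\alpha$ (for the edge constraint) and $\beta$ (for the $t_k$ constraint), the functional derivative condition $\delta\S/\delta g = \alpha\,\delta t_1/\delta g + \beta\,\delta t_k/\delta g$ reads, at almost every point $(x,y)$,
\begin{equation}
S'(g(x,y)) = \alpha + \beta\,k\bigl[d(x)^{k-1} + d(y)^{k-1}\bigr],
\end{equation}
since $t_k(g)=\int_0^1 d(x)^k\,dx$ contributes $k\,d(x)^{k-1}$ (and symmetrically $k\,d(y)^{k-1}$) to the variation. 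Because $S'(w) = \log\frac{1-w}{w}$ is strictly monotonic and invertible, this equation determines $g(x,y)$ as an explicit function of $d(x)$ and $d(y)$ alone: $g(x,y) = \Phi\bigl(d(x)^{k-1}+d(y)^{k-1}\bigr)$ for a fixed strictly monotone function $\Phi = (S')^{-1}(\alpha + \beta k\,\cdot)$.

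Next I would feed this pointwise formula back into the definition of the degree function to obtain a self-consistency (fixed-point) equation for $d$. Writing $u = d(x)^{k-1}$, integration in $y$ gives
\begin{equation}
d(x) = \int_0^1 \Phi\bigl(d(x)^{k-1} + d(y)^{k-1}\bigr)\,dy =: F(d(x)),
\end{equation}
where $F$ is a single real-valued function of the one real variable $d(x)$ (the integral over $y$ is a fixed number once the multipliers and the whole function $d$ are fixed). The crucial observation is that every value $c$ in the essential range of $d$ must satisfy the same scalar equation $c = F(c)$. The heart of the argument is then to show that this equation has only finitely many solutions in $[0,1]$: $F(c) - c$ is a real-analytic (indeed explicit, built from $\Phi$ and the fixed integral) function of $c$ on the relevant interval, and an analytic function that is not identically zero has only finitely many zeros on a compact interval. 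Hence $d$ takes finitely many values, say $M$ of them, and Proposition~\ref{CDSprop} yields that $g$ is $M$-podal.

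The main obstacle I anticipate is making the Euler--Lagrange equation rigorous and ruling out the degenerate possibilities. Two subtleties demand care. First, the multiplier equation holds only on the interior region where $0 < g(x,y) < 1$; one must separately control the sets where $g$ saturates at $0$ or $1$ (where $S'$ blows up), showing that these ``boundary'' pieces are themselves organized into finitely many blocks consistent with the monotone structure from Theorem~\ref{monotonic}. Second, I must exclude the degenerate case $\beta = 0$ or $F(c)\equiv c$; if $\beta=0$ the problem reduces to unconstrained degree and $g$ is constant, which is trivially finite-podal, so the analyticity/nonvanishing argument only needs to be run in the genuinely constrained regime. The delicate part is justifying that $\Phi$, and hence $F$, is real-analytic on the open interval where the constraint is active and that the composition with the fixed integral preserves analyticity; once that is secured, finiteness of the zero set is immediate and the proof closes via Proposition~\ref{CDSprop}.
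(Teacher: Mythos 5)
Your plan has exactly the skeleton of the paper's proof of Theorem \ref{maintheorem}: derive the Euler--Lagrange equation (the paper's (\ref{EL1})), invert $S'$ to write $g$ as a function of $d(x)^{k-1}+d(y)^{k-1}$, substitute back into $d(x)=\int_0^1 g(x,y)\,dy$ to get a scalar fixed-point equation satisfied by every value of $d$, argue that the relevant function (the paper's $F$ in (\ref{defineF})) is analytic and not identically zero so it has finitely many roots, and conclude with Proposition \ref{CDSprop}. So this is not a different route; the question is whether the steps you defer can be filled in as you suggest.

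There are genuine gaps at precisely the points you flag as obstacles, and your proposed resolutions are not the ones that work. First, the saturation sets $\{g=0\}$ and $\{g=1\}$: you propose to show they are ``organized into finitely many blocks,'' but what is needed---and what the paper's Lemma \ref{lagrange1} actually proves---is that they have \emph{measure zero}, so that (\ref{EL1}) holds almost everywhere on the square. The key idea you are missing is a two-scale entropy comparison: moving $\epsilon$ of mass from a neighborhood of $(1,1)$ (where $g=1$) to a neighborhood of $(0,0)$ (where $g=0$) gains entropy of order $\epsilon\ln(1/\epsilon)$, while restoring the disturbed value of $t_k$ by moving mass inside the region where $g\in(0,1)$ costs only $O(\epsilon)$; since $\epsilon\ln(1/\epsilon)\gg\epsilon$, a maximizer cannot have saturation of positive measure. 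Second, the existence of the Lagrange multipliers is not a routine ``first-order optimality condition'' in this infinite-dimensional, non-smooth setting: the paper proves it by choosing three continuity points at which the $3\times3$ matrix of functional derivatives of $(t_1,t_k,\S)$ is invertible and applying the inverse function theorem to contradict maximality; this in turn uses the almost-everywhere continuity coming from double monotonicity, and it requires $d$ to be non-constant so that $\delta t_k/\delta g$ and $\delta t_1/\delta g$ are independent (which also gives uniqueness of the multipliers, a fact you implicitly use when treating $F$ as a single fixed function). Third, multipliers can only be expected when $\T$ lies in the \emph{interior} of the phase space; the theorem as stated covers boundary $\T$ as well, and the paper disposes of that case separately via Theorem \ref{clique_theorem} (boundary optimizers are g-cliques, g-anticliques, or at most 3-podal), a case your argument silently skips. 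The analyticity of $F$ is the most minor gap---the paper proves it by a convolution/Fourier-decay argument---but it too needs more than ``composition preserves analyticity,'' since $F$ is an integral against the unknown distribution of $d(y)$.
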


\begin{proof} 
When $\T$ is on the boundary of the phase space (the space of
achievable values of the densities $t(g)$)
Theorem \ref{clique_theorem}, below, indicates that the 
graphon is either 1-podal (on the lower boundary) or $\le 3$ podal
on the top boundary. 
So for the remainder of the proof we assume $\T$ is in the interior.

\begin{lemma}\label{lagrange1} For the $k$-star model, let $g$ be a graphon 
  that maximizes the entropy $\S(g)$ subject to the  constraints
 $t(g)=\T$, where $\T$ lies in the interior of the phase space of possible
 densities. Then there exist constants $\beta_1$, $\beta_2$ such that
 the Euler-Lagrange equation
 \begin{equation}\label{EL1}
\beta_1 + \beta_2d(x)^{k-1}+\beta_2 d(y)^{k-1}=\ln \left [ \frac{1}{g(x,y)} - 1 \right ]
\end{equation}
holds for almost every $(x,y)$. Furthermore, the constants $\beta_{1},\beta_2$ are uniquely defined. 
\end{lemma}

\begin{proof}[Proof of lemma]
First note that $g$ cannot take values in $\{0,1\}$ only, since such a
graphon would have zero entropy, and for each $\T$ in the interior of
the phase space it is easy to construct a graphon (even bipodal, see
section \ref{SEC:simul}) with positive entropy.

%
%
%

We claim that $g\in(0,1)$ on a set of full measure. Suppose otherwise.
Then (by double monotonicity) $g(x,y)$ is either 1 on a neighborhood of 
$(1,1)$ or is zero on a neighborhood of $(0,0)$ (or both). By moving 
$\epsilon$ mass from a neighborhood of $(1,1)$ to a neighborhood of 
$(0,0)$, we can increase the entropy by order $\epsilon \ln(1/\epsilon)$,
while leaving the edge density fixed. In the process, we will decrease 
$t_k$, since the monotonicity of $d(x)$ implies that the
functional derivative $\delta t_k/\delta g =
(k/2) (d(x)^{k-1}+d(y)^{k-1})$ is greater near $(1,1)$ than near $(0,0)$.
However, we claim that we can restore the value of $t_k$ by moving mass 
within the region $R_0$ where $g \in (0,1)$, at a cost in entropy
of order $\epsilon$. Since $\epsilon \ln(1/\epsilon) \gg \epsilon$, 
for sufficiently small $\epsilon$ the
combined move will increase entropy while leaving $t_1$ and $t_k$ fixed,
which is a contradiction.

The details of the second movement of mass depend on whether $\delta t_k/
\delta g$ is constant on $R_0$ or not. If $\delta t_k/\delta g$ is not constant,
we can restrict attention to a slightly smaller region $\tilde R_0$ 
where $g$ is bounded
away from 0 and 1, and then move mass from a portion of $\tilde R_0$ 
where $\delta t_k/\delta g$ is smaller
to a portion where $\delta t_k/\delta g$ is larger. This will increase 
$t_k$ to first order in the amount of mass moved. Since $g$ is bounded away
from $0$ and $1$ on $\tilde R_0$, the change in the entropy is bounded by 
a constant times the amount of mass moved, as required.

If $\delta t_k/\delta g$ is constant on $R_0$, then it is constant on
a rectangle within $R_0$. But the only way for $d(x)^{k-1}+d(y)^{k-1}$
to be constant on a rectangle is for $d(x)$ and $d(y)$ to be constant
for $(x,y)$ in that rectangle. By Theorem \ref{monotonic}, this implies that
$g(x,y)$ is constant on the rectangle. Moving mass within the
rectangle will then change neither the entropy nor $t_k$ to first order,
but will change both (with $t_k$ increasing and the entropy
decreasing) to second order. So by moving an amount of mass of order
$\sqrt{\epsilon}$, we can restore the value of $t_k$ at an
$O(\epsilon)$ cost in entropy. This proves our assertion that $g(x,y)\in
(0,1)$ on a set of full measure.

Next we note that the degree function $d(x)$ must take on at least two values, since otherwise
we would be on the lower boundary of the phase space, with $t_k=t_1^k$. This means that the functional derivative
\begin{equation}\label{func_deriv}
 \frac{\delta t_k}{\delta g}(x,y) = \frac{k}{2} \left ( d(x)^{k-1} +
 d(y)^{k-1} \right )
\end{equation}
is not a constant function.  If $\delta \S/\delta g(x,y)$ cannot be written as a linear combination
of $\delta t_1/\delta g(x,y)=1$ and $\delta t_k/\delta g(x,y)$, then we can find three points
$p_i=(x_i,y_i)$ such that $g$ is continuous at each point, and such that the matrix 
\begin{equation}
 \begin{pmatrix} 1 & 1&1 \cr 
\frac{\delta t_k}{\delta g}(p_1) 
 & \frac{\delta t_k}{\delta g}(p_2) 
 & \frac{\delta t_k}{\delta g}(p_3) \cr  
\frac{\delta\S}{\delta g}(p_1)  
& \frac{\delta\S}{\delta g}(p_2)  
& \frac{\delta\S}{\delta g}(p_3)
\end{pmatrix}
\end{equation}
is invertible.  But then, by adjusting the amount of mass near each $p_i$ (and near the reflected points
$p_i'=(y_i,x_i)$), we can independently vary $t_1$, $t_k$ and $\S$ to first order. By the inverse function theorem, 
we can then increase $\S$ while leaving $t_1$ and $t_k$ fixed, which is a contradiction.  Thus $\delta \S/\delta g$ must
be a linear combination of $\delta t_1/\delta g$ and $\delta t_k/\delta g$, which is equation (\ref{EL1}). 
Since $\delta t_1/\delta g$ and $\delta t_k/\delta g$ are linearly independent, the coefficients are unique. 
\end{proof}

Continuing the proof of Theorem \ref{maintheorem}, solving (\ref{EL1}) for $g(x,y)$ gives
\begin{equation}\label{Euler-Lagrange}
g(x,y)= \frac{1}{1+ \exp[{\beta_1 + \beta_2d(x)^{k-1}+\beta_2 d(y)^{k-1}}]}, 
\end{equation}
and integrating with respect to $y$ gives
\begin{equation}\label{degree eqn}
d(x)=\int^1_0 {dy\over  1+ \exp[{\beta_1 + \beta_2d(x)^{k-1}+\beta_2 d(y)^{k-1}}]}. 
\end{equation}
Let $d(x)$ be any solution of (\ref{degree eqn}), let $z$ be a real
variable, and consider the function
\begin{equation}\label{defineF}
F(z)=z -\int^1_0 {dy\over  1+ \exp[{\beta_1 + \beta_2z^{k-1}+\beta_2d(y)^{k-1}}]},
\end{equation}
where the function $d(y)$ is treated as given. 
By equation (\ref{degree eqn}), all actual values of $d(x)$ are roots 
of $F(z)$.  

The second term in (\ref{defineF}) 
is an analytic function of $z$, as follows. 

Write $W=\beta_2 z^{k-1}$ and $Y=\beta_2 d(y)^{k-1}$ then the integral
is 
\begin{equation}
\int {d\mu(Y)\over  1+ \exp({\beta_1 + W +Y})},
\end{equation}
the convolution of an analytic function of $W$ with an integrable
measure $\mu(Y)$. Since the Fourier transform of an analytic function decays exponentially at infinity
and the Fourier transform of an integrable measure is bounded, the Fourier transform of the
convolution decays exponentially at infinity, so the convolution itself  is an analytic function of $W$. Since $W$
is an analytic function of $z$, $F(z)$ is an analytic function of $z$. 

Note that $F(z)$ is strictly negative for $z \le 0$ and strictly positive
for $z \ge 1$ (the integrand being strictly less than $1$).  
Being analytic and not
identically zero, $F(z)$ can only have finitely many roots in any
compact interval.  
By Rolle's Theorem any accumulation point
   of the roots would have to be an accumulation point of the roots
  of $F'(z)$, $F''(z)$, etc. So all derivatives of $F$ would have to
  vanish at that point, making the Taylor series around it identically
  zero.
In particular $F(z)$ can only have finitely many roots between 0 and 1, 
implying there are
only finitely many values of $d(x)$.  By Proposition \ref{CDSprop}, the graphon $g$
is $M$-podal.
Note that the  roots of $F(z)$ are not necessarily values of $d(x)$,
so this construction only gives an upper bound to the actual
value of $M$. 
\end{proof}

The previous proof actually showed more than that optimal graphons are
multipodal. We showed that the possible values
of $d(x)$ are roots of the function $F(z)$ defined in
(\ref{defineF}).  This allows us to prove the following refinement of 
Theorem \ref{maintheorem}

\begin{theorem}\label{maintheorem2}
For any $k$-star model, there exists a fixed $M$ such that all
entropy-maximizing graphons are $m$-podal with $m \le M$.
\end{theorem}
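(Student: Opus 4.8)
The content of Theorem \ref{maintheorem2} beyond Theorem \ref{maintheorem} is the \emph{uniformity} of $M$ in $\T$, so the whole game is to make the bound from the previous proof independent of the constraint. Write $\ell(u)=(1+e^{u})^{-1}$ and let $\mu$ be the law of $d(y)^{k-1}$ on $[0,1]$. By the proof of Theorem \ref{maintheorem}, any maximizer is $m$-podal with $m$ bounded by the number of roots in $[0,1]$ of
\[
F(z;\beta_1,\beta_2,\mu)=z-\int_0^1 \ell\big(\beta_1+\beta_2 z^{k-1}+\beta_2 t\big)\,d\mu(t),
\]
so the plan is to bound this root count by a constant $M(k)$ uniformly over all admissible triples $(\beta_1,\beta_2,\mu)$. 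Differentiating, $F'(z)=1-\beta_2(k-1)z^{k-2}G'(\beta_2 z^{k-1})$ with $G'<0$; hence for $\beta_2\ge 0$ one has $F'>0$ on $(0,1)$, giving a single root and $m=1$. So I may assume $\beta_2<0$, and since $F(0)<0<F(1)$ the function is never identically zero.

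First I would dispose of every regime with bounded multipliers by compactness. If $(\beta_1^{(n)},\beta_2^{(n)})$ stays in a compact subset of $\bbR^2$ and $\mu^{(n)}\to\mu^\ast$ weak-$*$ (probability measures on $[0,1]$ being weak-$*$ compact), then, since $\ell$ and all its derivatives are bounded and continuous, $F(\cdot\,;\beta_1^{(n)},\beta_2^{(n)},\mu^{(n)})$ converges to $F^\ast=F(\cdot\,;\beta_1^\ast,\beta_2^\ast,\mu^\ast)$ in $C^\infty$ on a neighborhood of $[0,1]$. The limit $F^\ast$ is analytic and $\not\equiv 0$, so it has finitely many roots of total order $P^\ast<\infty$. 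Near a root of order $p$ the derivative $F^{\ast(p)}$ is bounded away from $0$, so $F^{(n),(p)}$ is nonvanishing there for large $n$, and a Rolle argument bounds the number of nearby roots of $F^{(n)}$ by $p$; away from the roots of $F^\ast$, $F^{(n)}$ is nonzero. Thus the root count of $F^{(n)}$ is at most $P^\ast$ eventually, and the bound is uniform over any set of triples on which the multipliers stay bounded. Granting that the multipliers remain bounded on compact subsets of the interior of the phase space, this yields a uniform podality bound there, and likewise on the lower boundary, where $\beta_2=0$.

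The real obstacle is the approach to the upper boundary, where $\beta_2\to-\infty$ (with $\beta_1$ diverging comparably). Compactness fails, and one cannot bound the root count for \emph{arbitrary} $\mu$: as $\beta_2\to-\infty$ the integral in $F$ degenerates to a tail (step) function of $\mu$, so an $m$-atom measure can force of order $m$ roots. The crux is therefore to use the self-consistency that the compactness argument throws away, namely that the atoms of $\mu$ sit exactly at the values $d_i^{k-1}$ with $d_i$ themselves roots of $F$. Here I would study the threshold limit $g\to\mathbf 1\big[d(x)^{k-1}+d(y)^{k-1}>\theta\big]$, show that a self-consistent degree profile in this limit can take only $O(1)$ distinct values — matching the at-most-$3$-podal boundary graphons of Theorem \ref{clique_theorem} — and propagate this to a uniform bound on the number of roots of $F$ for all large $|\beta_2|$, with estimates uniform in $\beta_1$ and in the self-consistent $\mu$. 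Untangling the circular dependence of $\mu$ on its own roots as $\beta_2\to-\infty$ is where the genuine difficulty lies.

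With these two ranges in hand — bounded multipliers (the compact interior and the lower boundary) and $|\beta_2|$ large (a neighborhood of the upper boundary) — the compactness of the phase space lets me cover it by finitely many pieces and take the maximum of the resulting bounds to produce a single $M(k)$. As a cross-check, and a possible route to an explicit constant, I would also try to bound the roots directly: with $m(\xi)=G(\beta_2\xi)$, the roots of $F$ are the diagonal crossings of $B(\xi)=m(\xi)^{k-1}$, whose number is at most $2$ plus the number of inflection points of $B$; bounding those inflections uniformly is equivalent to the core difficulty above.
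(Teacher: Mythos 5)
Your proposal correctly isolates the two ingredients the paper also uses --- the root-stability fact that an analytic function with $m$ roots (counted with multiplicity) on a compact interval keeps at most $m$ roots under $C^m$-small perturbation, and a compactness argument --- but it has a genuine gap, one you flag yourself: the regime $\beta_2\to-\infty$ is never handled. Because you run compactness in the parameter space $(\beta_1,\beta_2,\mu)$, which is \emph{not} compact, the whole neighborhood of the upper boundary of the phase space falls outside your argument; there you offer only a program (``study the threshold limit \ldots\ propagate this to a uniform bound''), and you concede that untangling the self-consistency of $\mu$ is ``where the genuine difficulty lies.'' As you yourself observe, no bound uniform over \emph{arbitrary} $\mu$ can exist in that regime, since atomic measures make the integral a staircase with many diagonal crossings; so your covering argument cannot be completed without that missing analysis. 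Moreover, even your interior regime is conditional: it rests on the unproved hypothesis that the multipliers of optimal graphons stay bounded on compact subsets of the interior (``granting that \ldots''). So, as written, this is a plan with the hardest step open, not a proof.

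The paper's proof sidesteps exactly this dead end by taking limits of \emph{graphons} rather than of parameters. If the podality of optimal graphons $g_i$ (for possibly varying constraints $\T_i$) were unbounded, compactness of the space of reduced graphons in the cut metric gives a subsequence converging to some $g_\infty$; since densities are continuous and the entropy $\S$ is upper semicontinuous in the cut metric, $g_\infty$ is again an optimal graphon for the limiting constraint, so it has an associated analytic function $F_\infty\not\equiv 0$ with finitely many roots $M_\infty$, and the root-stability lemma then caps the podality of $g_i$ at $M_\infty$ for large $i$ --- a contradiction. The structural point is that a limit of optimizers is an optimizer, so the limiting object automatically satisfies the self-consistency (atoms of $\mu$ located at roots of $F$, with the right weights) that you were trying to reimpose by hand in the degenerate limit: one never needs root bounds over arbitrary triples $(\beta_1,\beta_2,\mu)$, only over those realized by actual optimal graphons, and compactness holds precisely for that family. (To be fair, the paper's write-up is itself terse about why $F_i\to F_\infty$ in $C^{M_\infty}$, which still requires control of the multipliers along the sequence; but the graphon-level framing is what makes the degenerate regime tractable at all.) To rescue your version, the missing lemma is in effect that degenerating multipliers for \emph{optimal} graphons force cut-metric convergence to the boundary optimizers of Theorem \ref{clique_theorem} --- which is exactly what the paper's compactness argument delivers without extra work.
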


\begin{proof}
If $f(z)$ is an analytic function on a compact interval with $m$ roots
(counted with multiplicity), then any $C^m$-small perturbation of
$f(z)$ will also have at most $m$ roots on the interval. 
Thus if $g$ is a graphon with associated function $F(z)$ with $m$ roots, 
and if $\tilde g$ is an optimal graphon whose degree function is $L^1$-close 
to that
of $g$, then from the convolution, the associated function $\tilde F(z)$ of $\tilde g$ will 
be a $C^m$-small perturbation of $F(z)$ and so will have at most
$m$ roots, and $\tilde g$ will be at most $m$-podal. 

Suppose there is no universal bound $M$ on the podality
of optimal graphons on $R$. Let $g_1, g_2, \ldots$ be a sequence of
optimal graphons (perhaps with different values of $\T$) with
the podality going to infinity, and let $F_i(z)$ be the associated
functions for these graphons. Since the space of graphons is
compact, there is a subsequence that converges to a graphon
$g_\infty$.  The associated
function $F_\infty(z)$ of $g_\infty$ is analytic, and so has only a finite number
$M_\infty$ of roots. But then
for large $i$, $F_i(z)$ has at most $M_\infty$ roots and $g_i$ is at most
$M_\infty$-podal, which is a contradiction. 
\end{proof}

We end this section with an argument which displays the use of the
notion of phase. By definition a phase is a connected open subset of
the phase space in which the entropy $s_\T$ is analytic. (The
connection with statistical mechanics is discussed in the Conclusion.)
The following only simplifies one step in the proof of our main result,
Theorem \ref{maintheorem}, but it shows how the notion of phase can be
relevant. 

\begin{theorem}\label{zero}
For any microcanonical model let $g_0$ be a graphon 
which maximizes the Shannon entropy $\S(g)$ subject to the  constraints
$t(g)=\T$, where $\T$ lies in the interior of the phase space of possible
densities and $s_{\T}$ is differentiable at $\T$.
Then the set $A=g_0^{-1}(\{0,1\})$ has measure zero.
\end{theorem}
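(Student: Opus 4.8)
The plan is to argue by contradiction, exploiting the mismatch between the \emph{unbounded} derivative of the Shannon entropy function $S$ at the endpoints $0$ and $1$ and the \emph{bounded} functional derivatives of the polynomial density constraints; the hypothesis that $s_\T$ is differentiable at $\T$ is what lets us close the argument with no compensating movement of mass. Suppose $A=g^{-1}(\{0,1\})$ has positive measure. Then at least one of $g^{-1}(0)$, $g^{-1}(1)$ has positive measure, and since $S(w)=S(1-w)$ the two cases are symmetric, so I assume $g=1$ on a (necessarily symmetric) set $B$ with $|B|>0$. For small $h>0$ I would introduce the competitor $g_h=g-h\,\mathbf{1}_B$, which is symmetric with values in $[0,1]$, and set $\tilde\T=t(g_h)$.

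First I would record the two first-order computations. Because $S(1)=0$ and $S(1-h)=h\log(1/h)+O(h)$,
\[
\S(g_h)-\S(g)=\tfrac12\,|B|\,S(1-h)=\tfrac12\,|B|\,h\log(1/h)+O(h),
\]
which is positive and of the dominant order $h\log(1/h)$. On the other hand each constraint density $t_i$ is a polynomial functional of $g$ whose functional derivative $\delta t_i/\delta g$ is bounded (by the number of edges of the constraining subgraph, since $0\le g\le1$), so
\[
\tilde\T-\T=-h\int_B\frac{\delta t}{\delta g}\,dx\,dy+O(h^2)=O(h).
\]

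The decisive step uses differentiability. By the Variational Principle, $g_h$ is an admissible graphon at density $\tilde\T$, hence $\S(g_h)\le s_{\tilde\T}$. Since $\T$ is interior and $s$ is differentiable at $\T$,
\[
s_{\tilde\T}=s_\T+\nabla s_\T\cdot(\tilde\T-\T)+o(|\tilde\T-\T|)=s_\T+O(h).
\]
Combining this with $\S(g)=s_\T$ and the entropy computation yields
\[
s_\T+\tfrac12\,|B|\,h\log(1/h)+O(h)\ \le\ s_\T+O(h),
\]
that is $\tfrac12\,|B|\log(1/h)\le O(1)$, which fails for $h$ small since $|B|>0$. This contradiction forces $|A|=0$, the $g=0$ case being handled by the symmetric perturbation $g_h=g+h\,\mathbf{1}_{B}$ on a positive-measure subset of $g^{-1}(0)$.

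I expect the only delicate point to be the clean separation of scales: confirming that the density response is honestly $O(h)$, uniformly controlled by the bounded functional derivatives with a genuine $O(h^2)$ remainder, while the entropy gain is the strictly larger $h\log(1/h)$. Granting this, differentiability of $s_\T$ is exactly the ingredient that upgrades ``the densities moved by $O(h)$'' to ``the maximal achievable entropy at the new densities moved by $O(h)$,'' which is what makes the explicit compensating mass movements of Lemma \ref{lagrange1} unnecessary here. The structural hypothesis doing the work is merely the existence of $\nabla s_\T$, so that the optimal entropy responds linearly, hence by $O(h)$, to an $O(h)$ perturbation of the constraints.
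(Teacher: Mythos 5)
Your proof is correct and is essentially the paper's own argument: both perturb $g$ by a small amount $h$ on the set where it equals $0$ or $1$, observe that the entropy gains the dominant term $\tfrac12 m(A)\,h\log(1/h)$ while the densities move only by $O(h)$, and then use $\S(g_h)\le s_{t(g_h)}$ together with differentiability of $s_\T$ (so $s_{t(g_h)}=s_\T+O(h)$) to reach a contradiction. The only cosmetic difference is that you split $A$ into the $g=0$ and $g=1$ pieces and treat one at a time, whereas the paper shifts the values on all of $A$ at once; the mechanism is identical.
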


\begin{proof}[Proof of theorem]

Define $g_\epsilon$ by moving the value of
$g_0$ on $A$ by $\epsilon$, $0<\epsilon<1$. 

From their definitions the densities satisfy
$t(g_\epsilon)=t(g_0)+O(\epsilon)$. By integrating over $A$ we see that the
Shannon entropy satisfies 

\begin{equation}
\S(g_\epsilon)=\S(g_0) - m(A)\epsilon \ln(\epsilon), 
\end{equation}
so noting that $s_{t(g_\epsilon)}\ge \S(g_\epsilon)$ and $s_{\tau_0}=\S(g_0)$
we get 

\begin{equation}
s_{t(g_\epsilon)}\ge s_{\tau_0}- m(A)\epsilon
\ln(\epsilon). 
\end{equation}
From differentiability, as the vector $\alpha \to 0$
\begin{equation}\label{differentiable}
  s_{\tau_0 + \alpha}=s_{\tau_0}+O(||\alpha||). 
\end{equation} 
So as $\epsilon \to 0$ we have a 
contradiction with (\ref{differentiable})
unless $m(A)=0$, which concludes the proof.

\end{proof}


\section{Phase space}
\label{SEC: space}

We now consider $k$-star models with $2\le k \le 30$.
The phase space is the set of those $(\E,\T)\subset [0,1]^2$ 
which are accumulation points of the values of pairs (edge density,
$k$-star density) for finite graphs. The lower boundary (minimum of $\T$ given
$\E$) is easily seen to
be the Erd\H{o}s-R\'enyi curve: $\T=\E^k$, since H\"older's inequality
gives
\begin{equation}
\T^{1/k} = \|d(x)\|_k \ge \|d(x)\|_1 = \E.
\end{equation} 

We now determine the upper
curve. This was determined for $k=2$ in \cite{AK}, and perhaps was
published for
higher $k$ though we do not know a reference. 

We are looking for the graphon which maximizes $t(g)$ for fixed
$e(g)=\E$, and this time arrange the points of the line so that $d(x)$ is 
monotonically decreasing. As in the proof of Theorem \ref{maintheorem} we can assume that $g$ is monotonic (this time, 
decreasing) in both coordinates.

We call a graphon a {\em g-clique} if it is bipodal of the form
\begin{equation}\label{EQ:Clique}
 g(x,y) = \begin{cases} 1 & x<c \text{ and }y < c \cr 0
 & \hbox{otherwise} \end{cases}
\end{equation}
and a {\em g-anticlique} if it is of the form 
\begin{equation}\label{EQ:Anticlique}
 g(x,y) = \begin{cases} 0 & x>c \text{ and }y > c \cr 1
 & \hbox{otherwise.} \end{cases}
\end{equation}

\begin{theorem} \label{clique_theorem}
For fixed $e(g)=\E$,  and any $2 \le k \le 30$, any
graphon that maximizes the $k$-star density is equivalent to a g-clique or
g-anticlique.
\end{theorem}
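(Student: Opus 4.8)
The plan is to reduce the problem to an optimization over the degree function and then to a finite-dimensional comparison. Since $t_k(g)=\int_0^1 d(x)^k\,dx$ depends on $g$ only through its degree function $d$, and following the reduction already invoked just before the statement (arrange the line so that $d$ is nonincreasing and apply the double-monotonicity mechanism behind Theorem \ref{monotonic}), I would assume the maximizer $g$ is doubly monotonic nonincreasing. The problem then becomes: maximize the strictly convex functional $d\mapsto\int_0^1 d(x)^k\,dx$ among nonincreasing degree functions $d:[0,1]\to[0,1]$ with $\int_0^1 d=\E$.

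Next I would run the Lagrange/Euler--Lagrange argument for $t_k-\lambda t_1$ in the mass-moving style of Lemma \ref{lagrange1}. On the region $R_0=\{0<g<1\}$, stationarity forces $d(x)^{k-1}+d(y)^{k-1}$ to be constant; using the almost-everywhere continuity of optimizers together with monotonicity of $d$, this makes $d$ locally constant wherever $R_0$ projects, so $d$ takes only finitely many values and $g$ is multipodal by Proposition \ref{CDSprop}. Hence the intermediate values of $g$ are confined to finitely many rectangular blocks whose degree pairs $(u,v)$ all lie on the single curve $u^{k-1}+v^{k-1}=c$, and the problem becomes finite-dimensional in the block heights and interval lengths.

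Within this finite-dimensional problem I would exploit strict convexity of $w\mapsto w^k$ to eliminate genuinely intermediate structure. Holding $\int_0^1 d$ fixed while spreading the degree values apart strictly increases $\int_0^1 d^k$; concretely, mass can be moved between an intermediate block and the saturated regions $\{g=1\}$, $\{g=0\}$ so as to preserve the edge density while increasing $t_k$, unless the configuration is already of threshold (staircase) type with $g\in\{0,1\}$. Among such threshold graphons the g-clique (degree values $c,0$) and the g-anticlique (degree values $1,c$) are the one-step cases, with explicit values $t_k=\E^{(k+1)/2}$ for the clique and $t_k=c+(1-c)c^k$ (where $2c-c^2=\E$) for the anticlique.

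The main obstacle is the final step: showing that no multi-step threshold configuration, and no nondegenerate intermediate block permitted by the level-set condition, beats the larger of the clique and anticlique values. After the reductions this is a finite-dimensional inequality in the step heights and lengths, with $t_k$ and $t_1$ explicit analytic functions, and I expect it to be precisely where the hypothesis $2\le k\le 30$ enters: the required comparison is $k$-dependent and, as far as I can see, must be verified per $k$ (by an explicit analysis of the low-dimensional critical-point equations, computer-assisted for the larger values) rather than by a single argument uniform in $k$. Removing the restriction would require such a uniform inequality, which I would flag as the genuinely delicate point.
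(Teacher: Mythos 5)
Your overall strategy---monotone rearrangement, Euler--Lagrange/mass-moving arguments to force $\{0,1\}$ structure, then a finite-dimensional comparison where $2\le k\le 30$ enters---matches the paper's proof in outline, but two of its load-bearing steps are missing or incorrect. First, your claim that stationarity on $R_0=\{0<g<1\}$ plus almost-everywhere continuity forces $d$ to take only finitely many values does not follow. The Euler--Lagrange relation $d(x)^{k-1}+d(y)^{k-1}=\lambda$ constrains $g$ only on $R_0$; it says nothing about the $\{0,1\}$-valued part. A doubly monotone $\{0,1\}$ graphon whose $0/1$ interface is a genuine curve (for instance lying along the level set $x^{k-1}+y^{k-1}=\lambda$, in the spirit of the triangle graphon of Section \ref{finitely_forced}) has empty $R_0$ and a degree function taking a continuum of values, so it passes every test you impose. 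The paper's Step 2 is exactly the missing ingredient: first-order conditions force any non-horizontal, non-vertical piece of the interface to lie on that level curve, and then a second-order perturbation (replacing a diagonal piece by a horizontal-vertical zig-zag) strictly increases $t_k$, which is what rules out curved boundaries and yields a staircase. Your convexity/``spread the degree values'' move eliminates intermediate blocks but never addresses curved $\{0,1\}$ interfaces, and your appeal to Proposition \ref{CDSprop} requires hypotheses ($d$ takes finitely many values; $g$ maximizes entropy among graphons with its degree function) that you have not established. Relatedly, your opening reduction---maximize $\int_0^1 d^k$ over all nonincreasing $d$ with $\int_0^1 d=\E$---drops the constraint that $d$ be realizable as the degree function of a graphon; taken literally its optimum is $d=\mathbf{1}_{[0,\E]}$ with $t_k=\E$, which no graphon achieves (the anticlique only gives $\E/2+O(\E^2)$).

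Second, even granting the staircase reduction, you leave the entire quantitative half of the proof undone. A staircase with many steps could a priori beat both the clique and the anticlique; the paper excludes this in Step 3 by a convexity argument: the contribution of two adjacent corners has the form $A\frac{z^k-1}{z-1}+B\frac{w^k-1}{w-1}$ on the constraint curve $(z-1)(w-1)=C$, which is strictly convex, so corners merge until the graphon is at most $3$-podal. Only then, in Step 4, does the $k$-dependent, computer-assisted comparison enter (checking that $F(x,z(x))<0$ for $0<x<1$ and $2\le k\le 30$ in the $n=3$ case, plus a unimodality argument for $n=2$). You correctly anticipate that the restriction $k\le 30$ enters in a final finite-dimensional comparison, but without the bound on the number of steps that comparison is not finite-dimensional at all, so as written the proposal does not constitute a proof.
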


G-cliques always have $c = \sqrt{\E}$ and $k$-star density
$\E^{(k+1)/2}$.  G-anticliques have $c=1-\sqrt{1-\E}$
and $k$-star density 
\begin{equation}
 c+c^k-c^{k+1}= [1-\sqrt{1-\E}][1+\sqrt{1-\E}(1-\sqrt{1-\E})^{k-1}].
\end{equation}
For $\E$ small, the g-anticlique has $k$-star density ${\E}/{2} +
O(\E^2)$, which is greater than $\E^{(k+1)/2}$.
For $\E$ close to 1, however, the g-clique has a higher $k$-star
density
than the g-anticlique.  

While our proof only covers $k$ up to 30, we conjecture that the result 
holds for all values of $k$. 
The only difficulty in extending to all $k$ is 
comparing the $t_k$-value for a clique, anticlique and a certain 
``tripodal anticlique'' of the form (\ref{c3=1}), below.

\begin{corollary}For 
$2 \le k \le 30$, the upper boundary of the phase space is
\begin{equation}\label{EQ:Upper Bdary}
\T = \begin{cases} [1-\sqrt{1-\E}][1+\sqrt{1-\E}(1-\sqrt{1-\E})^{k-1}]
  &
      \E \le \E_0 \cr
\E^{(k+1)/2} & \E \ge \E_0
\end{cases}
\end{equation}
where $\E_0$ is the value of $\E$ where the two branches of $\T=\T(\E)$ cross.
\end{corollary}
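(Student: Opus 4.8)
The plan is to reformulate the problem purely in terms of the degree function, reduce the optimizer to a piecewise-constant threshold graphon using convexity and monotonicity, and then finish by an explicit comparison of a short list of candidates. Since $t_k(g)=\int_0^1 d(x)^k\,dx$ and $e(g)=\int_0^1 d(x)\,dx=\E$ depend on $g$ only through the degree function $d$, the task is to maximize $\int_0^1 d^k$ over all realizable degree functions $d$ of mean $\E$. By the nonincreasing analogue of Theorem \ref{monotonic} I may assume $g$ is doubly monotone nonincreasing, so that $d$ is nonincreasing, and (as in the continuity corollaries following Theorem \ref{monotonic}) that $g$ is continuous almost everywhere, which legitimizes the pointwise mass-moving arguments used below.

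Because $t\mapsto t^k$ is strictly convex for $k\ge 2$, the objective $d\mapsto\int_0^1 d^k$ is convex on the convex, compact set of realizable degree functions of fixed mean $\E$, so a maximizer occurs at an extreme point. To see that such a maximizer is $\{0,1\}$-valued, I would use the Euler--Lagrange/KKT condition for the Lagrangian $t_k-\mu\,t_1$: its functional derivative is $\tfrac k2\bigl(d(x)^{k-1}+d(y)^{k-1}\bigr)-\mu$, which is monotone in $d(x)$ and $d(y)$, so it forces $g=1$ where $d(x)^{k-1}+d(y)^{k-1}$ is large and $g=0$ where it is small, leaving $0<g<1$ possible only on the level set $d(x)^{k-1}+d(y)^{k-1}=2\mu/k$; convexity then excludes a singular arc of positive measure, so $g\in\{0,1\}$ almost everywhere. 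Thus the optimizer is a threshold graphon, the indicator of a symmetric down-set whose boundary is a decreasing staircase and whose degree function is a decreasing involution. A perturbation analysis of the step positions---each admissible move changes $t_1$ and $t_k$ to first order, with convexity governing the second order---then shows that at a maximizer the staircase has at most two steps. This leaves exactly three candidate families: the g-clique, the g-anticlique, and one genuinely $3$-podal configuration, the tripodal anticlique, consistent with the ``$\le 3$-podal'' assertion invoked in the proof of Theorem \ref{maintheorem}.

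It then remains to compare these finitely many families. For each, $t_k$ is an explicit function of $\E$ once the internal parameters are optimized: the g-clique gives $\E^{(k+1)/2}$ and the g-anticlique gives $c+c^k-c^{k+1}$ with $c=1-\sqrt{1-\E}$, while the tripodal anticlique yields a two-parameter expression to be maximized over its free parameters. The theorem follows once one verifies that, for every $\E\in[0,1]$, the maximum over the tripodal family never exceeds the larger of the clique and anticlique values, so that the overall maximizer is always a g-clique or a g-anticlique.

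The hard part will be exactly this last inequality. It is a transcendental relation in $\E$ and the integer $k$ that does not factor, so I would reduce the tripodal optimization to a one-parameter problem and verify the resulting inequality for each integer $k$ separately; this succeeds for $2\le k\le 30$ but is not established uniformly in $k$, which is precisely why the statement is confined to that range. A secondary difficulty is making the ``at most two steps'' reduction fully rigorous: excluding $\ge 4$-podal staircases requires combining the Euler--Lagrange condition with the strict convexity of $t^k$ carefully enough to rule out higher-order critical configurations, rather than arguing only heuristically.
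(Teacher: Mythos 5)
Your outline follows the paper's own proof of Theorem \ref{clique_theorem} (from which the corollary is immediate) quite closely: reduce to a $\{0,1\}$-valued graphon via the variational equation plus a second-order mass-moving argument, show the $0$--$1$ interface is a monotone staircase, bound the number of steps by convexity, and finish with an explicit comparison whose only $k$-dependence is a per-$k$ verification that forces $k \le 30$. One remark on framing: the convex-analysis opening (maximizer at an extreme point of the set of realizable degree functions) is not load-bearing --- you revert to the Euler--Lagrange/KKT argument, which is exactly the paper's Step 1 --- and it would need care anyway, since $d \mapsto \int_0^1 d^k$ is weakly lower, not upper, semicontinuous, so compactness of the constraint set and an extreme-point principle do not combine as easily as stated.

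The genuine gap is in your enumeration of candidates. A symmetric two-step staircase region comes in two distinct families, not one: besides the tripodal anticlique $\{x<c_1\}\cup\{y<c_1\}\cup\{x<c_2,\,y<c_2\}$ (which touches the lines $x=1$ and $y=1$), there is the ``weak clique'' $\{x<c_1,\,y<c_2\}\cup\{x<c_2,\,y<c_1\}$ with $c_2<1$, which touches the coordinate axes instead. Your list --- clique, anticlique, tripodal anticlique --- omits the latter, so your final comparison stage would skip an entire family of stationary configurations and the proof would be incomplete as written. The paper eliminates this family (its case $n=2$, $c_2<1$) by setting $z=c_2/c_1$ and observing that
\begin{equation}
\frac{t_k}{t_1^{(k+1)/2}} \;=\; \frac{z^k+z-1}{(2z-1)^{(k+1)/2}}
\end{equation}
is unimodal for $z>1$, so the maximum over the family occurs at an endpoint, $c_2=c_1$ (clique) or $c_2=1$ (anticlique). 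Notably this step works for \emph{all} $k$, so it does not disturb your correct diagnosis that only the tripodal-anticlique comparison is responsible for the restriction $2\le k\le 30$. With this case added, your plan coincides with the paper's argument.
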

For $k=2$ the crossing point is
$\E_0=1/2$, for $k=3$ it is $\E_0=3/4$, and as $k \to \infty$ it
approaches $1$. The boundary of the phase space for the 2-star model
is shown in Fig.~\ref{FIG:Phase-Boundary}.
\begin{figure}[ht]
\centering
\includegraphics[angle=0,width=0.4\textwidth]{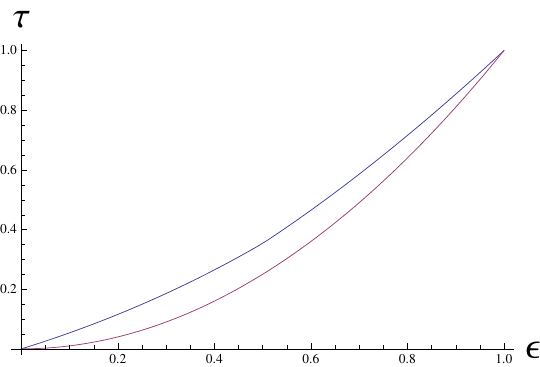}\hskip
1cm 
\includegraphics[angle=0,width=0.39\textwidth]{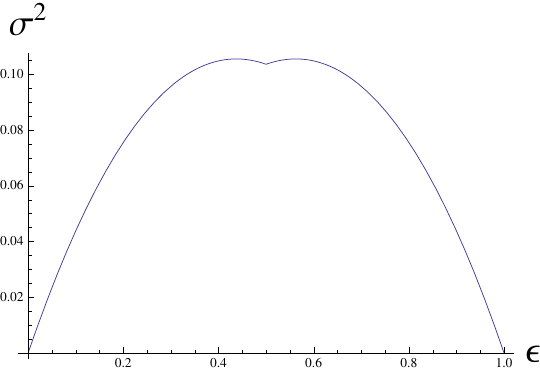} 
\caption{Boundary of the phase space for the 2-star case. Left: true
  phase boundary; Right: Plot of $\E$ versus $\sigma^2=\T-\E^2$; in
  this case the lower boundary becomes the $x$-axis.}
\label{FIG:Phase-Boundary}
\end{figure}

\subsection{Proof of Theorem \ref{clique_theorem}}

The proof has three steps:
\begin{enumerate}
\item Showing that $g(x,y)$ only takes on the values 0 and 1.
\item Showing that $g$ is at most $3$-podal.
\item Showing that $g$ is bipodal. (This is the only step that uses $k\le 30$.)
\end{enumerate}

\subsubsection{Step 1: Showing that we have a 0-1 graphon}
\label{SEC: 0-1}

The variational equation for maximizing $t_k(g)$ while fixing $t_1(g)$ is
(see (\ref{EL1}) without the `$\S$' term)
\begin{equation}\label{ddlambda}
d(x)^{k-1} + d(y)^{k-1} =  \lambda
\end{equation}
for some unknown constant $\lambda$, whenever 
$0< g(x,y) < 1$. 
When $g(x,y)=0$ we have $d(x)^{k-1}+d(y)^{k-1} \le \lambda$, and when $g(x,y)=1$
we have $d(x)^{k-1}+d(y)^{k-1}  \ge \lambda$.  
Since $ d(x)^{k-1}+d(y)^{k-1}$ is a decreasing function of both $x$ and $y$, this
means that there is a strip (of possibly zero thickness) running
roughly from the northwest corner of $[0,1]^2$ to the southeast
corner, where $g(x,y)$ is strictly between 0 and 1, and where
$ d(x)^{k-1}+d(y)^{k-1}  =\lambda$. All points to the southwest of this strip have
$g(x,y)=1$, and all points to the northeast have $g(x,y)=0$. (The
boundaries of the strip
are necessarily monotone paths).

We claim that the strip has zero thickness, and hence is actually a
boundary path between the $g=1$ zone and the $g=0$ zone. To see this,
suppose that the strip contains a small ball, and hence contains a
small rectangle. Since $d(x)^{k-1}+d(y)^{k-1}$ is constant on this rectangle,
$d(x)$ is constant and $d(y)$ is constant. We can then increase the
k-star density (i.e. $k$-th moment of $d(x)$) by moving some mass from
right to left, and from top to bottom on the mirror image region, as in the proof of Theorem \ref{maintheorem}. 
As in that proof, this move
increases $t_k(g)$ to second order in the amount of mass moved.
Since we assumed that our graphon was a maximizer (and not just a
stationary point), we have a contradiction.

\subsubsection{Step 2: Showing $g$ is at most $3$-podal}
\label{SEC:zig-zag}

By the above argument, the boundary between the region where $g=0$ and the region where $g=1$ is
a monotone decreasing path $\gamma$ from $(0,1)$ to $(1,0)$, symmetric under reflection in $\{x=y\}$. 
Let $(c,c)$ be the point where it crosses the line $\{x=y\}$, and let
$\gamma_0$ be the portion of $\gamma$ between $(0,1)$ and $(c,c)$.
We compute
\begin{eqnarray}  t_k  =&  \int_0^1 y^k dx & \hbox{since $d(x)=y(x)$}
\cr 
 =&   \int_{\gamma_0} y^k dx - x^k dy & \hbox{using reflection symmetry} \cr 
 =&  -c^{k+1} + \int_{\gamma_0} (y^k + k x^{k-1} y) dx & \hbox{since $-x^k dy = 
k x^{k-1} y dx - d(x^ky)$} \cr 
 =&  -c^{k+1} + \int_0^c (y^k + k x^{k-1} y) dx & \hbox{since there are no $dy$
terms.} 
\end{eqnarray}

Fix $c$. To maximize $t_k$ for this value of $c$ 
we need to maximize, over $y$, the above integral subject to the constraints that:
first, $y=y(x)\in[c,1]$ is a non-increasing 
function of $x$, 
and second, the integral $\int_0^c y dx$  is fixed. 
If there are any nontrivial functions $\delta y(x)$ such that $\int_0^c \delta y(x) dx = 0$ and such that 
$y(x) + t \delta y(x)$ is non-increasing (as a function of $x$) 
for all values of $t$ 
sufficiently close to 0, then 
$f(t) = \int_0^c (y +t \delta y)^k + k x^{k-1} (y + t \delta y) dx$ must have a local maximum at $t=0$. However, $f''(0) = k(k-1) \int_0^c y^{k-2} \delta y^2 dx > 0$. Thus such a function $\delta y$ cannot exist. 

{We now claim that $y$ takes at most one value between $c$ and $1$.
Suppose that the range of $y$ includes two points $y_1$ and $y_2$ that are strictly between $c$ and $1$.  That is, suppose the preimages of 
$(y_i-\epsilon, y_i+\epsilon)$ have positive measure for any $\epsilon > 0$.  
Let 
$\phi_1$ and $\phi_2$ be a positive bump functions supported in non-overlapping $\epsilon$-neighborhoods of $y_1$ and $y_2$.  
Then take
\begin{equation}
\delta y(x) =  C_1 \phi_1(y(x)) - C_2 \phi_2(y(x)),
\end{equation}
where the constants $C_1$ and $C_2$ are chosen so that 
$\int \delta y(x) dx = 0$. 
Let $M$ be the maximal value of $|C_i \phi_i'(y) |$, which is finite 
because $\phi_i$
is smooth. Then $y(x) + t \delta y(x)$ is monotonic for all $|t| < M^{-1}$, and thus $y$ is not maximal.
This completes the proof of the claim. 

Since $y$ takes at most one value between $c$ and $1$, $y(x)$ takes the form 
\be y(x) = \begin{cases} 1 & x < x_1 , \cr y_0 & x_1 < x < x_2, \cr  
c  & x > x_2. \end{cases}
\ee

There are now several cases to consider.
\begin{itemize}
\item
Suppose that $x_2<c$. Then we can vary $x_2$ and $y_0$ keeping $t_1$ and $c$ fixed to increase $t_k$:
we write
\begin{equation}
t_k=x_1^k(1-y_0)+y_0^k(x_2-x_1)+x_2^k(y_0-c)+c^k(c-x_2)
\end{equation}
and substituting $x:=x_2-x_1$ and $z:=y_0-c$ this becomes
\begin{equation}
t_k=-x_1^k z+(z+c)^kx+(x+x_1)^kz-c^k x+constant
\end{equation}
and $t_1=2xz+constant$, where $constant$ refers to positive quantities independent of $x_2,y_0$. 
Let $C=xz$ be fixed, so that $t_1$ is fixed. 
Then we have
\begin{equation}
\frac{t_k}C=\frac{(z+c)^k-c^k}z + \frac{(x+x_1)^k-x_1^k}x+ constant
\end{equation}
which, replacing $x$ with $C_2/z$, is a polynomial in $z$ and $1/z$ with nonnegative coefficients, hence convex. 
It is thus maximized at the endpoints of definition of $x,z$,
that is, $y_0=1$ or $x_2=c$. 

\item
Suppose that $x_2=c$ and $x_1>0$.  In this case we vary each of $x_1,c,y_0$
forwards and backwards in ``time'' according to the differential equation
\be \dot x_1(t)=\frac{\alpha}{1-y_1(t)},\qquad 
\dot c(t) = \frac{\beta}{y_0(t)-c(t)},\qquad 
\dot y_0(t) = \frac{\gamma}{c(t)-x_1(t)},
\ee
where a dot denotes a derivative with respect to the time parameter $t$, 
and with constants $\alpha,\beta,\gamma$ satisfying $\alpha+\beta+\gamma=0$.
Then $\dot t_1=0$.
We will show that the second 
derivative $\ddot t_k$ is positive at $t=0$ 
for some choice of $\alpha,\beta,\gamma$,
implying that we are not at a local maximum of $t_k$. 

We compute
\be \dot t_k = \alpha(kx_1^{k-1}+\frac{1-y_0^k}{1-y_0}) + 
\beta(kc^{k-1}+\frac{y_0^k-c^k}{y_0-c})+\gamma(ky_0^{k-1}
+\frac{c^k-x_1^k}{c-x_1}).\ee
Differentiating again, we have 
\be \ddot t_K = \alpha^2A+\beta^2 B+\gamma^2 C+\alpha\gamma D +\beta\gamma E \ee
where the coefficients
\begin{eqnarray} A & = & k(k-1) x_1^{k-2}/(1-y_0), \cr
B & = & (k(k-1) c^{k-2}
+ \frac{\partial}{\partial c} \frac{y_0^k-c^k}{y_0-c}   )/(y-c), \cr
 C &=& k(k-1) y_0^{k-2} /(c-x_1), \cr
D & = & \frac{1}{c-x_1}  \frac{\partial}{\partial y_0} \frac{1-y_0^k}{1-y_0}
+ \frac{1}{1-y_0}  \frac{\partial}{\partial x_1} \frac{c^k-x_1^k}{c-x_1}, \cr 
E & = & \frac{1}{c-x_1}  \frac{\partial}{\partial y_0} \frac{y_0^k-c^k}{y_0-c}
+ \frac{1}{y_0-c}  \frac{\partial}{\partial c} \frac{c^k-x_1^k}{c-x_1},
\end{eqnarray}

are all positive.
Now taking $\alpha=-E$ and $\beta=D$ (evaluated at $t=0$) 
for example, the last two terms cancel and we find $\ddot t_k(0)>0$. 

\item
If $x_1=0$ and $x_2=c$, $y(x)$ takes the form 
\be y(x) = \begin{cases} y_0 & x < c , \cr c & c < x < y_0, \cr  
0  & x > y_0, \end{cases}
\ee
In this case set $z=y_0/c$.  We then have
\begin{equation}
t_1=c^2(2z-1)
\end{equation}
\begin{equation}
t_k=c y_0^k+y_0c^k-c^{k+1}=c^{k+1}(z^k+z-1),
\end{equation}

which gives 
\begin{equation}
\frac{t_k}{t_1^{(k+1)/2}}=\frac{z^k+z-1}{(2z-1)^{(k+1)/2}}.
\end{equation}
This function of $z$ is unimodal for $z>1$ (decreasing, then increasing, as can be seen by replacing $z$ with $w=2z-1$)
so the maximum of $t_k$ for fixed $t_1$ occurs at either $y_0=1$ or $y_0=c$, resulting in an anticlique or clique.
\end{itemize}

Thus $y$ cannot take \emph{any} values strictly between $1$ and $c$.}

{Summarizing this section, we have shown that} the path $\gamma_0$ from $(0,1)$ to $(c,c)$ must be first horizontal, then
vertical, and then horizontal, 
although one of the horizontal segments can have zero  length. 

This implies
that $g$ is either bipodal (a clique or anticlique) or tripodal, and if tripodal it is of the form
\be\label{c3=1}g(x,y) = \left\{\begin{array}{ll}1&x<c_1\\c_2&c_1<x<c_2\\c_1&c_2<x<1.\end{array}\right.\ee

\subsubsection{Step 3: Showing that $g$ is bipodal}

If $g$ has the form (\ref{c3=1}),
the edge and $k$-star densities are:
\begin{equation} t_1 = c_1 + (c_2-c_1)c_2 + (1-c_2)c_1 = 2c_1-2c_1c_2+c_2^2 \qquad 
t_k = c_1 + (c_2-c_1)c_2^k + (1-c_2)c_1^k.
\end{equation}
Taking derivatives with respect to $c_j$ gives:
\begin{eqnarray}\label{dotet} 
&\partial_1 t_1 = 2(1-c_2);  & \partial_2 t_1=2(c_2-c_1); \cr & \partial_1 t_k = 1-c_2^k+k(1-c_2)c_1^{k-1}; &
\partial_2 t_k = (c_2^k-c_1^k)+k(c_2-c_1)c_2^{k-1}. 
\end{eqnarray}

An important quantity is the ratio $r_j = 2 \frac{\partial_j t_k}{\partial_j t_1}$. For $j=1,2$, this works out to
\begin{eqnarray}\label{rvalues}
r_1 & = & kc_1^{k-1} + \frac{{1} - c_2^k}{{1}-c_2} \cr 
r_2 & = &  kc_2^{k-1} + \frac{c_2^k - c_1^k}{c_2-c_1} \end{eqnarray}

We imagine $c_1$ and $c_2$ evolving in time so as to keep $t_1$ fixed, with 
$\dot c_1 = c_2-c_1,~~\dot c_2 = c_2 -
{1}$ (note that this
satisfies $\dot t_1=0$ by (\ref{dotet})). We must have $r_1=r_2$, or else $\dot t_k$ would be nonzero. 
We will show that then $d(r_1-r_2)/dt > 0$, and hence that $t_k$ increases to second order. 
We compute:
\begin{eqnarray} 
\dot r_1 - \dot r_2 & = & k(k-1)[c_1^{k-2} \dot c_1 - c_2^{k-2} \dot
  c_2] \cr
&& + k \left [ -\frac{c_1^{k-1}\dot c_2}{{1}-c_2} -
  \frac{c_2^{k-1}\dot c_2}{c_2-c_1} + \frac{c_1^{k-1}\dot
    c_1}{c_2-c_1} \right ] \cr 
&&+ \frac{c_2^k-c_1^k}{c_2-c_1} \left [ \frac{\dot c_2}{{1}-c_2} +
  \frac{\dot c_2}{c_2-c_1} - \frac{\dot c_1}{c_2-c_1}
\right ]
\end{eqnarray}

Plugging in the values of $\dot c_1$ and $\dot c_2$ then gives
\begin{eqnarray}\label{whoops}
\dot r_1 - \dot r_2 & = & k(k-1) \left[ c_1^{k-2}(c_2-c_1) +
  c_2^{k-2}(1-c_2)\right ] \cr 
&& + k \left [ 2c_1^{k-1} + c_2^{k-1} \frac{1-c_2}{c_2-c_1} \right ]
\cr 
&& - \frac{c_2^k-c_1^k}{c_2-c_1} \left [ 2 +
  \frac{1-c_2}{c_2-c_1}\right ]
\end{eqnarray}
Let $x=c_1/c_2$ and let $z=1/c_2$. Note that $x<1<z$. Then $\dot r_1
- \dot r_2$ is  $c_2^{k-1}$ times
\begin{eqnarray} f(x,z) &:= &k(k-1)[x^{k-2}(1-x) + z-1] + k[2x^{k-1} +
    \frac{z-1}{1-x}] - \left ( \frac{1-x^k}{1-x}\right )
\left [ 2 + \frac{z-1}{1-x} \right ] 
\cr 
&=& k(k-1)[x^{k-2}(1-x) + z-1] + 2[(k-1)x^{k-1}-x^{k-2} - \cdots -1]
\cr 
&& +(z-1)[(k-1) + (k-2)x + \cdots + x^{k-2}].
\end{eqnarray}
This is an order $k-1$ polynomial in $x$ and linear in $z$. Likewise,
the constraint $r_1=r_2$ becomes 
\begin{equation}
 0 = F(x,z) : = kx^{k-1} + \frac{z^k-1}{z-1} -k -
\frac{1-x^k}{1-x}.
\end{equation}

Note that $F(1,1) =f(1,1)=0$.  By implicitly differentiating $F$ we
see that $dx/dz = -1$ at $(x,z)=(1,1)$, hence
that $df/dz|_{(1,1)} = \frac32k(k-1)>0$.  Hence $\dot r_1
- \dot r_2$ is positive when $x$ is slightly less than 1. 
If $f(x,z)$ ever fails to be positive on the set where $F(x,z)=0$ then, by the intermediate value
theorem, there is a value of $z$, and a corresponding value of $x$,
for which $f(x,z)=0$ and $F(x,z)=0$.  The intersection
of the two degree $k-1$ algebraic curves
$F=0$ and $f=0$ would then have to contain a real point $(x,z)$ with
$0<x<1$.  

This is easy to check. Solving $f(x,z)=0$ for $z$, we convert $F(x,z)$ into a function of $x$ alone. Plotting this function for $2 \le k \le 30$ and $0 < x < 1$ shows that the function is always negative, approaching a simple zero at $x=1$; see Fig.~\ref{FIG:F-k} for plots of $F(x,z(x))$ as a function of $x$ for some $k$ values. (We were unable to prove this for all $k$; checking larger
values of $k$ is straightforward, but we stopped at 30.) This eliminates the
case (\ref{c3=1}) for $k \le 30$.


We conclude this section with a qualitative feature of phase diagrams 
for models with constraints on edges and any other simple graph $H$.

\begin{theorem}\label{simplyconnected} The phase diagram  for an edge-$H$ model is simply
  connected.
\end{theorem}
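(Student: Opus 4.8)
The plan is to ignore entropy maximization entirely and argue purely about the shape of the density region. Write $\Omega \subseteq [0,1]^2$ for the phase space of the edge-$H$ model, namely the set of accumulation points of $(e(G), t_H(G))$ over finite graphs $G$. By the Lov\'asz--Szegedy theory used throughout the paper, $\Omega$ equals the set $\{(e(g), t_H(g)) : g \text{ a graphon}\}$ of density pairs realized by graphons. First I would record two soft facts about $\Omega$. It is \emph{compact}, since the space of graphons is compact in the cut metric and both $e(\cdot)$ and $t_H(\cdot)$ are continuous subgraph densities, so $\Omega$ is a continuous image of a compact set. It is \emph{connected}, since the space of graphons is convex---a convex combination of two graphons is again a graphon---hence connected, and $\Omega$ is its continuous image.

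The key geometric input is that every vertical slice of $\Omega$ is an interval. Fix an edge density $\E \in [0,1]$ and let $\cG_\E = \{g : e(g) = \E\}$. Because $e(g) = \int_{[0,1]^2} g$ is linear in $g$, the set $\cG_\E$ is convex and therefore connected, so its continuous image under $t_H$, which is exactly the fiber $\Omega_\E = \{\tau : (\E,\tau) \in \Omega\}$, is a connected (and, by compactness, closed) subset of $\bbR$; that is, $\Omega_\E = [\tau_-(\E), \tau_+(\E)]$ for some $\tau_- \le \tau_+$.

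It then remains to prove the topological statement that a compact connected planar set all of whose vertical fibers are intervals is simply connected, and here I would argue that the complement $\bbR^2 \setminus \Omega$ is connected, which for a planar continuum is equivalent to simple connectivity. Suppose instead that $\bbR^2 \setminus \Omega$ had a bounded component $U$, and pick $p = (\E_0, \tau_0) \in U$. Since $U$ is a component of the open set $\bbR^2 \setminus \Omega$ its boundary lies in $\Omega$, and since $U$ is bounded the maximal vertical open segment of $U$ containing $p$ has two finite endpoints $(\E_0, \tau_1)$ and $(\E_0, \tau_2)$ with $\tau_1 < \tau_0 < \tau_2$; both endpoints lie in $\partial U \subseteq \Omega$. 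But then $\tau_1, \tau_2 \in \Omega_{\E_0}$, and since $\Omega_{\E_0}$ is an interval this forces $\tau_0 \in \Omega_{\E_0}$, i.e. $p \in \Omega$, contradicting $p \in U$. Hence $\Omega$ has no bounded complementary component and is simply connected. The routine density-space facts in the first two paragraphs are not the difficulty; the main obstacle is this last step---pinning down the precise sense of ``simply connected'' and justifying its equivalence with connectedness of the complement for the continuum $\Omega$ (a region trapped between the lower-semicontinuous curve $\tau_-$ and the upper-semicontinuous curve $\tau_+$).
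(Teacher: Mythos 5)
Your first two steps are precisely the paper's proof, in slightly more abstract clothing: the paper fixes $\E$, takes a maximizer $g_\E$ and a minimizer $h_\E$ of $t_H$ on the fiber $\{g : e(g)=\E\}$, and observes that the linear interpolation $m_{\E,a}=a h_\E+(1-a)g_\E$ stays in that fiber (linearity of $e$) while $t_H(m_{\E,a})$ sweeps out all intermediate values (intermediate value theorem). That is exactly your ``convexity of $\cG_\E$ implies the fiber $\Omega_\E$ is an interval.'' The paper stops there, treating ``the region between the upper and lower boundary curves is filled in'' as synonymous with simple connectivity; you instead try to finish rigorously, and your argument that $\Omega$ has no bounded complementary component is correct (the endpoints of the maximal vertical segment do lie in $\partial U\subseteq\Omega$, and the interval property of the fiber then forces $p\in\Omega$). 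If ``simply connected'' is read as ``no holes,'' i.e.\ connected complement --- which is very likely the sense intended by the paper --- your proof is complete and is strictly more careful than the paper's.

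Under the standard definition (path-connected with trivial fundamental group), however, the equivalence you invoke in the last step is genuinely false for planar continua, in both directions, and this is a real gap rather than a technicality. The Warsaw circle is a path-connected planar continuum with trivial $\pi_1$ whose complement has two components; conversely, the topologist's sine curve $T=\{(x,\sin(1/x)) : 0<x\le 1\}\cup(\{0\}\times[-1,1])$ is a planar continuum with \emph{connected} complement that is not path-connected, hence not simply connected. Worse for your strategy: $T$ satisfies every property you have established for $\Omega$ --- compact, connected, every vertical fiber an interval, connected complement --- so no amount of further topology can close the gap from those facts alone; you need one more model-specific input. The cheapest one is the Erd\H{o}s-R\'enyi curve: letting $k$ be the number of edges of $H$, the constant graphons show that $(\E,\E^k)\in\Omega$ for every $\E$, i.e.\ $\Omega$ has a \emph{continuous} section meeting every fiber. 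Then $\Phi\bigl((\E,\tau),t\bigr)=\bigl(\E,(1-t)\tau+t\E^k\bigr)$ is continuous, stays in $\Omega$ by the interval-fiber property, and is a deformation retraction of $\Omega$ onto that curve, which is an arc. So $\Omega$ is contractible, in particular path-connected and simply connected; this one extra observation also upgrades the paper's own proof to the standard meaning of the theorem.
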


\begin{proof} This is just the intermediate value theorem. For each
  fixed edge density $\E$, let $g_\E$ be a graphon that maximizes
  $t_H$ and $h_\E$ a graphon which minimizes $t_H$. Consider the family of graphons 
$m_{\E,a}(x,y) = a h_\E(x,y) + (1-a) g_\E(x,y).$ When $a=0$ we get the maximal
  value of $t_H$, when $a=1$ we get the minimal value, and by
  continuity we have to get all values in between.  In other words,
  the entire phase space between the upper boundary and the lower boundary
  is filled in. 
\end{proof}

Note: This is an application of a technique we learned from Oleg Pikhurko
\cite{P}.

\begin{figure}[!ht]
\centering
\includegraphics[width=3in]{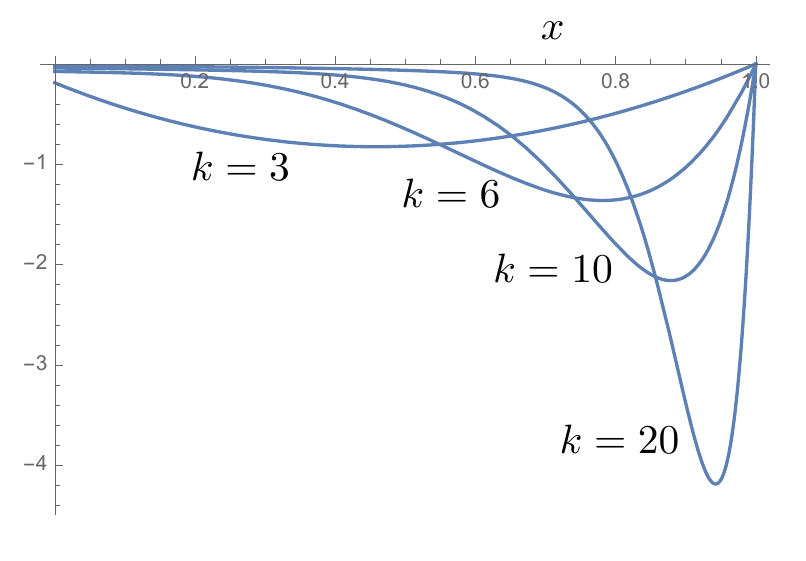} 
\caption{Plots of $F(x,z(x))$ as a function of $x$ for $k=3$, $k=6$, $k=10$ and $k=20$.}
\label{FIG:F-k}
\end{figure}

\section{Phase Transition for $2$-Stars}
\label{SEC: trans}

\begin{theorem}\label{phase_theorem}
For the 2-star model 
there are inequivalent graphons maximizing the constrained entropy on the line
segment $\{(1/2,\Tt)\,|\,\T^*< \Tt\le 2^{-3/2}\}$ for some $\T^*< 2^{-3/2}$. {Moreover,
near $(1/2,2^{-3/2})$, the maximizing graphons do not vary continuously with the constraint parameters.}
\end{theorem}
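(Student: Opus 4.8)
The plan is to exploit the \emph{duality involution} $\sigma(g)=1-g$ on graphons. Since the Shannon entropy function satisfies $S(w)=S(1-w)$, we have $\mathcal S(\sigma g)=\mathcal S(g)$ for every $g$. The degree function transforms as $d_{\sigma g}(x)=1-d_g(x)$, so $\E(\sigma g)=1-\E(g)$ and $\Tt(\sigma g)=1-2\E(g)+\Tt(g)$. In particular, \emph{on the slice $\E=1/2$} the map $\sigma$ preserves both the edge density and the $2$-star density as well as the entropy, so $\sigma$ permutes the entropy maximizers at any point $(1/2,\Tt)$. By Theorem \ref{clique_theorem} the maximizers at the boundary point $(1/2,2^{-3/2})$ are exactly the g-clique (with $c=2^{-1/2}$) and the g-anticlique (with $c=1-2^{-1/2}$), and $\sigma$ interchanges them; neither is fixed by $\sigma$ up to relabeling.

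First I would prove the non-uniqueness. Call a reduced graphon \emph{self-dual} if it is equivalent to $\sigma g$, and set $\T^{\mathrm{sd}}=\sup\{\Tt(g):\E(g)=1/2,\ g\text{ self-dual}\}$. I claim $\T^{\mathrm{sd}}<2^{-3/2}$. Self-duality is a closed condition in the cut metric (as $\sigma$ is a cut-metric isometry), and $\E,\Tt$ are cut-metric continuous, so by compactness of the space of reduced graphons the supremum is attained by a self-dual $h$ with $\E(h)=1/2$. If $\T^{\mathrm{sd}}$ equalled $2^{-3/2}$ then $h$ would maximize $\Tt$ at edge density $1/2$, hence by Theorem \ref{clique_theorem} be a g-clique or g-anticlique, neither of which is self-dual, a contradiction. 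Thus $\T^{\mathrm{sd}}<2^{-3/2}$. Now fix any $\Tt\in(\T^{\mathrm{sd}},2^{-3/2}]$ and let $g$ be an entropy maximizer at $(1/2,\Tt)$ (one exists by the Variational Principle). Since $\Tt(g)=\Tt>\T^{\mathrm{sd}}$, $g$ is not self-dual, so $g$ and $\sigma g$ are inequivalent; both maximize the entropy at $(1/2,\Tt)$, giving the asserted non-uniqueness with $\T^*=\T^{\mathrm{sd}}$.

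For the discontinuity I would produce two families of parameter points converging to $(1/2,2^{-3/2})$ whose maximizers converge to the two \emph{distinct} boundary graphons. For fixed $\E>1/2$ the upper boundary value is $\E^{3/2}$ and, since the clique strictly dominates the anticlique for $\E>1/2$, Theorem \ref{clique_theorem} shows the only graphon realizing $(\E,\E^{3/2})$ is the g-clique $K_\E$. Hence as $\Tt\uparrow\E^{3/2}$ every maximizer at $(\E,\Tt)$ converges in the cut metric to $K_\E$: the entropy tends to $0$, so any subsequential limit is $\{0,1\}$-valued with densities $(\E,\E^{3/2})$, forcing it to be $K_\E$. Choosing $\E_n\downarrow 1/2$ and $\Tt_n<\E_n^{3/2}$ close enough to $\E_n^{3/2}$ that the maximizers at $(\E_n,\Tt_n)$ lie within $1/n$ of $K_{\E_n}$, and using $K_{\E_n}\to K_{1/2}$, yields $(\E_n,\Tt_n)\to(1/2,2^{-3/2})$ with maximizers converging to the g-clique $K_{1/2}$. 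The symmetric construction with $\E_n\uparrow 1/2$ gives maximizers converging to the g-anticlique. As these two limits are inequivalent, no single-valued choice of maximizer can be continuous at $(1/2,2^{-3/2})$, which is the stated failure of continuity (a first-order transition across the line $\E=1/2$).

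The main obstacle is assembling the compactness/convergence steps, which rest on cut-metric compactness, joint continuity of $\E,\Tt,\mathcal S$, and the identification via Theorem \ref{clique_theorem} of the \emph{only} $\Tt$-maximizers as cliques and anticliques; the delicate point is that a near-boundary maximizer has entropy tending to $0$, so its limit is forced to be $\{0,1\}$-valued and thereby pinned down, while the threshold for ``close enough to the boundary'' is $\E$-dependent but harmless since we may let $\Tt_n\to\E_n^{3/2}$ arbitrarily fast. I note that the sharper claim that the maximizer jumps \emph{across} $\E=1/2$ at fixed $\Tt$ is equivalent, via the $\sigma$-symmetry relation for the Lagrange multipliers on the slice $\E=1/2$, to the non-vanishing $\beta_1+\beta_2\neq 0$ (i.e.\ non-differentiability of $s_\T$ along the transition line); the two-path argument above deliberately sidesteps that computation.
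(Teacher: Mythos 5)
Your proof is correct in substance and rests on the same core mechanism as the paper's: the involution $g \mapsto 1-g$ preserves $\S$ and, on the slice $\E = 1/2$, both densities; Theorem \ref{clique_theorem} identifies the g-clique and g-anticlique as the \emph{only} graphons at the corner $(1/2, 2^{-3/2})$, and the involution swaps them; compactness of the space of reduced graphons then forces entropy maximizers near the corner to be non-self-dual, hence non-unique. The packaging differs in two ways, both legitimate. For non-uniqueness, you define the threshold $\T^{\mathrm{sd}}$ as a supremum over self-dual graphons at $\E=1/2$ and show by compactness that it is attained and strictly below $2^{-3/2}$; the paper instead proves a metric lemma (every graphon with densities within $\delta$ of the corner lies within $D/3$ of $g_a$ or $g_c$, where $D$ also bounds the distance from each to the set of self-dual graphons) and concludes maximizers cannot be self-dual. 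Your formulation is arguably cleaner and produces a natural candidate for $\T^*$. For the discontinuity claim, you approach the corner along near-boundary interior points with $\E_n \downarrow 1/2$ and $\E_n \uparrow 1/2$, obtaining maximizers converging to $g_c$ and to $g_a$ respectively, so no selection of maximizers is continuous at the corner; the paper instead runs a path across the line $\E = 1/2$ and uses a parity-plus-symmetry argument to show that a jump occurs \emph{exactly on} that line, which is more informative (it exhibits the coexistence line) but not needed for the stated claim.

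One step of yours is wrong as written, though harmlessly so. To show that maximizers at $(\E,\Tt)$ converge to $K_\E$ as $\Tt \uparrow \E^{3/2}$, you argue that ``the entropy tends to $0$, so any subsequential limit is $\{0,1\}$-valued.'' That inference requires lower semicontinuity of $\S$ in the cut metric, which fails: $\S$ is only upper semicontinuous (finite graphs have $\S = 0$ yet converge to Erd\H{o}s-R\'enyi graphons with $\S > 0$), so a limit of low-entropy graphons need not have low entropy. Simply delete that clause: the subsequential limit has densities $(\E, \E^{3/2})$ by cut-metric continuity of subgraph densities, and Theorem \ref{clique_theorem}, together with the fact that for $\E > 1/2$ the g-clique value strictly exceeds the g-anticlique value, already identifies the limit as $K_\E$ with no reference to entropy at all.
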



\begin{proof}

For any graphon $g$, consider the graphon $g'(x,y) = 1-g(x,y)$. The degree
functions for $g'$ and $g$ are related by $d'(x) = 1-d(x)$. If $g$ has
edge and 2-star densities $\E$ and $\Tt$, then $g'$ has edge density $1-\E$ 
and 2-star density $\int_0^1 (1-d(x))^2 dx = 1-2\E+\Tt$. Furthermore, 
$\S(g')=\S(g)$. This implies that $g'$ maximizes the entropy at $(1-\E, 
1-2\E+\Tt)$ if and only if $g$ maximizes the entropy at $(\E,\Tt)$. 
In particular, if $g$ maximizes the entropy at $(1/2, \Tt)$, then so
does $g'$. To show that $\S(g)$ has a non-unique maximizer along the upper
part of the $\E=1/2$ line, we must only show that a maximizer $g$ is not related to its mirror $g'$ 
by a measure-preserving transformation of $[0,1]$. 

As noted in Section~\ref{SEC: space}, up to such a transformation
there are exactly two graphons corresponding
to $(\E,\Tt)=(1/2, 1/({2\sqrt{2}}))$, namely a g-anticlique $g_a$ and a
g-clique $g_c$. These are not related by reordering, since the values of the degree
function for the g-clique are $\sqrt{2}/2$ and 0, while those for the 
g-anticlique are $1$ and $1 - \sqrt{2}/2$.  Let $D$ be smallest of the
following distances in the cut metric (see Chapter 8 in \cite{Lov}): 
(1) from $g_a$ to $g_c$,
(2) from $g_a$ to the set of symmetric graphons, and (3) from $g_c$ to the set of 
symmetric graphons. 

\begin{lemma}There exists $\delta>0$ such that every graphon with $(\E,\Tt)$ within $\delta$ of 
$(1/2, 2^{-3/2})$ is within $D/3$ of either $g_a$ or $g_c$. 
\end{lemma}

\begin{proof} Suppose otherwise.  Then we could find a sequence of graphons with $(\E,\Tt)$ converging to $(1/2, 2^{-3/2})$ that
have neither $g_a$ nor $g_c$ as an accumulation point. However, the space of reduced graphons  
is known to be compact \cite{Lov}, so there must
be some accumulation point $g_\infty$ that is neither $g_a$ nor $g_c$. Since convergence in the cut metric implies convergence of the density of all subgraphs, 
$t_1(g_\infty)=1/2$ and $t_2(g_\infty) = 2^{-3/2}$. But this contradicts the fact that only $g_a$ and $g_ c$ have edge and 2-star
densities $(1/2, 2^{-3/2})$. 
\end{proof}

By the lemma, no graphon with $\E=1/2$
and $\Tt > 1/2\sqrt{2}- \delta$ is invariant 
(up to reordering) under $g \to 1-g$.  In particular, the entropy maximizers cannot be symmetric, so there must be two (or more)
entropy maximizers, one close to $g_a$ and one close to $g_c$.

Moreover, on a path in the parameter space 
near the upper boundary, from the anticlique on the upper boundary
at $\E=\frac12-\delta$ to the
clique on the upper boundary at $\E=1/2+\delta$, there is a discontinuity in the graphon, where it jumps from being close to $g_a$ to being
close to $g_c$. There must be an odd number of such jumps, and if the path is chosen to be symmetric with respect to
the transformation $\E \to 1-\E$, $\T_2 \to \T_2 + 1-2\E$, the jump points must be arranged symmetrically on the path.
In particular, one of the jumps must be at exactly $\E=1/2$. This shows that the $\E=1/2$ line forms the boundary between
a region where the optimal graphon is close to $g_a$ and another region where the optimal graphon is close to $g_c$.
\end{proof}

\section{Simulations}
\label{SEC:simul}
We now show some numerical simulations in the 2-star model ($\ell=2,
k_1=1, k_2=2$). Our main aim here is to present numerical evidence
that the maximizing graphons in this case are in fact \emph{bipodal},
and to clarify the significance of the degeneracy of 
Theorem~\ref{phase_theorem}.

To find maximizing $K$-podal graphons, we partition the interval $[0, 1]$ into $K$ subintervals $\{I_i\}_{i=1,\dots,k}$ with lengths $c_1,c_2,\cdots,c_K$,
that is, $I_i=[c_0+\dots+c_{i-1},c_0+\dots+c_i]$ (with $c_0=0$). We form a partition of the square $[0,1]^2$ using the product of this partition with itself. 
We are interested in functions $g$ that are piecewise constant on the partition:
\begin{equation}
	g(x,y)=g_{ij},\ \ (x,y)\in I_i\times I_j, \quad 1\le i,j\le K,
\end{equation}
with $g_{ij}=g_{ji}$. We can then verify that the entropy density $\S(g)$, the edge density $t_1(g)$ and the 2-star density $t_2(g)$ become respectively
\begin{equation}
	\S(g)=-\dfrac{1}{2} \dsum_{1\le i,j\le K} [g_{ij}\log g_{ij}+(1-g_{ij})\log(1-g_{ij})]c_i c_j,
\end{equation}
\begin{equation}
	t_1(g) = \dsum_{1\le i, j\le K} g_{ij}c_i c_j,\qquad t_2(g)=\dsum_{1\le i,j,k\le K} g_{ik}g_{kj}c_i c_j.
\end{equation}

Our objective is to solve the following maximization problem:
\begin{equation}
\max_{\{c_j\}_{1\le j\le K}, \{g_{i,j}\}_{1\le i,j\le K}} \S(g), \quad \mbox{subject to:}\quad t_1(g)=\epsilon, \quad t_2(g)=\tau_2, \quad \dsum_{1\le j\le K}c_j=1, \quad g_{ij}=g_{ji}.
\end{equation}

We developed in~\cite{RRS} computational algorithms for solving this
maximization problem
and have benchmarked the algorithms with theoretically known results. For a fixed $\tau\equiv (\E,\Tt)$, our strategy is to first maximize for a fixed number $K$, and then maximize over the number $K$. Let $s_{(\E,\Tt)}^K$ be the maximum achieved by the graphon $g_{_K}$, then the maximum of the original problem is $s_{(\E,\Tt)}=\max_K\{s_{(\E,\Tt)}^K\}$. Our computational resources allow us to go up to $K=16$ at this time. See~\cite{RRS} for more details on the algorithms and their benchmark with existing results.

The most important numerical finding in this work is that, for every pair $(\E,\Tt)$ in the interior of the phase space, the graphons that maximize $\S(g)$ are \emph{bipodal}. We need only four parameters ($c_1$, $g_{11}$, $g_{12}$ and $g_{22}$) to describe bipodal graphons (due to the fact that $c_2=1-c_1$ and $g_{12}=g_{21}$). For maximizing bipodal graphons, we need only three parameters, since (\ref{Euler-Lagrange}) implies that
\begin{equation}\label{EQ:Maxi Graphon}
\left(\frac1{g_{11}}-1\right)\left(\frac1{g_{22}}-1\right)=\left(\frac1{g_{12}}-1\right)^2,
\end{equation}
which was used in our numerical algorithms to simplify the calculations.

We show in Fig.~\ref{FIG:Graphons} maximizing graphons at some typical points in the phase space. The $(\E,\Tt)$ pairs for the plots are respectively: $(0.3,0.16844286)$ and $(0.3,0.10339268)$ for the first column (top to bottom), $(0.5,0.32455844)$ and $(0.5,0.27485281)$ for the second column, and $(0.7,0.56270313)$ and $(0.7,0.50339268)$ for the third column.
\begin{figure}[!ht]
\centering
\includegraphics[angle=0,width=0.23\textwidth]{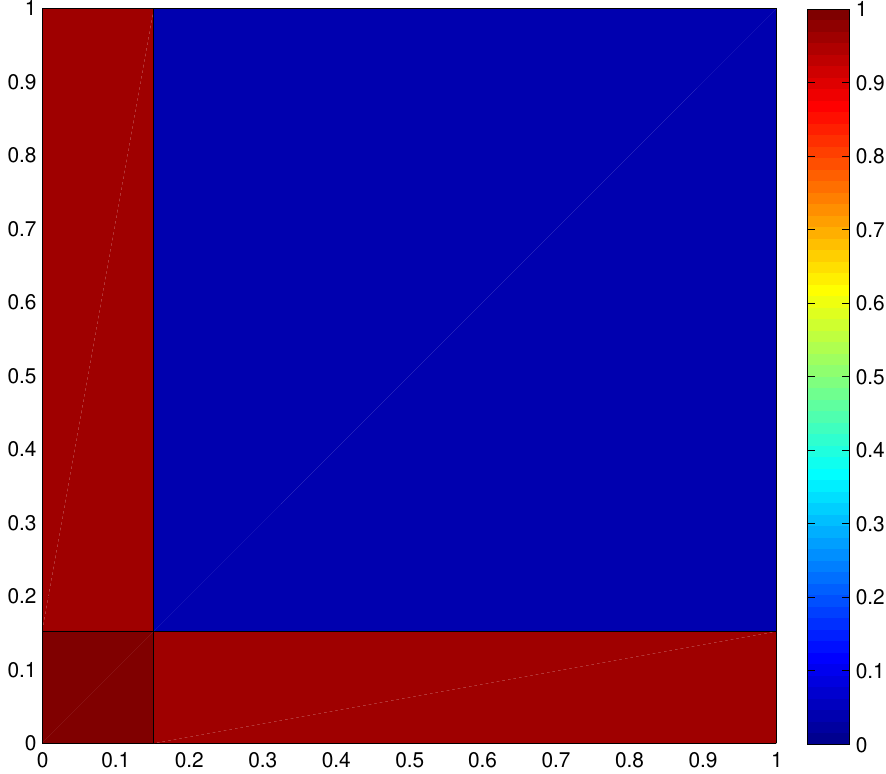} 
\includegraphics[angle=0,width=0.23\textwidth]{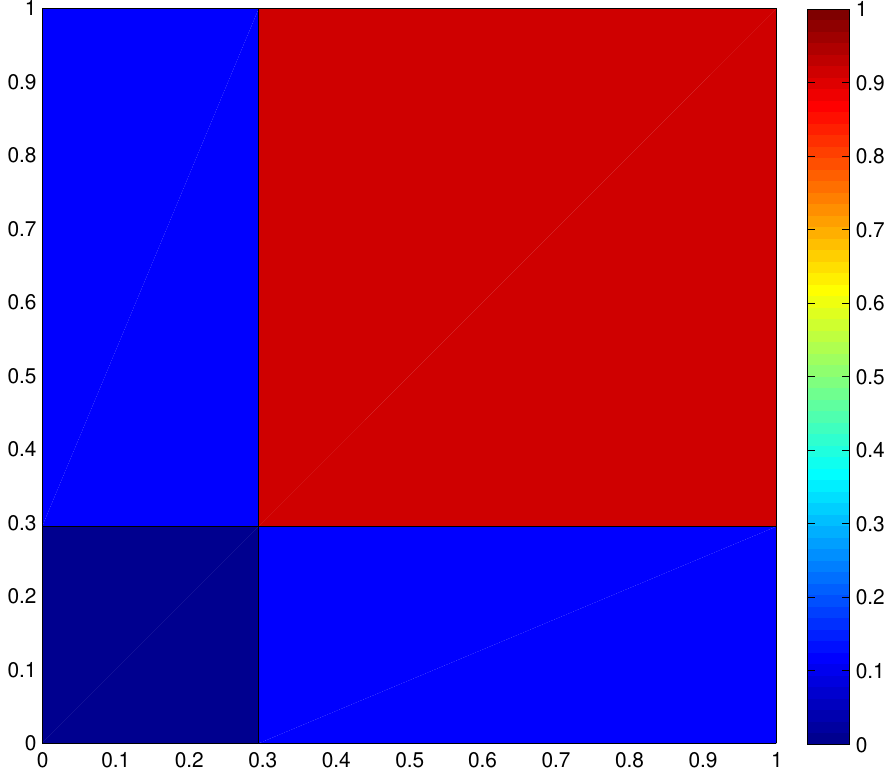}
\includegraphics[angle=0,width=0.23\textwidth]{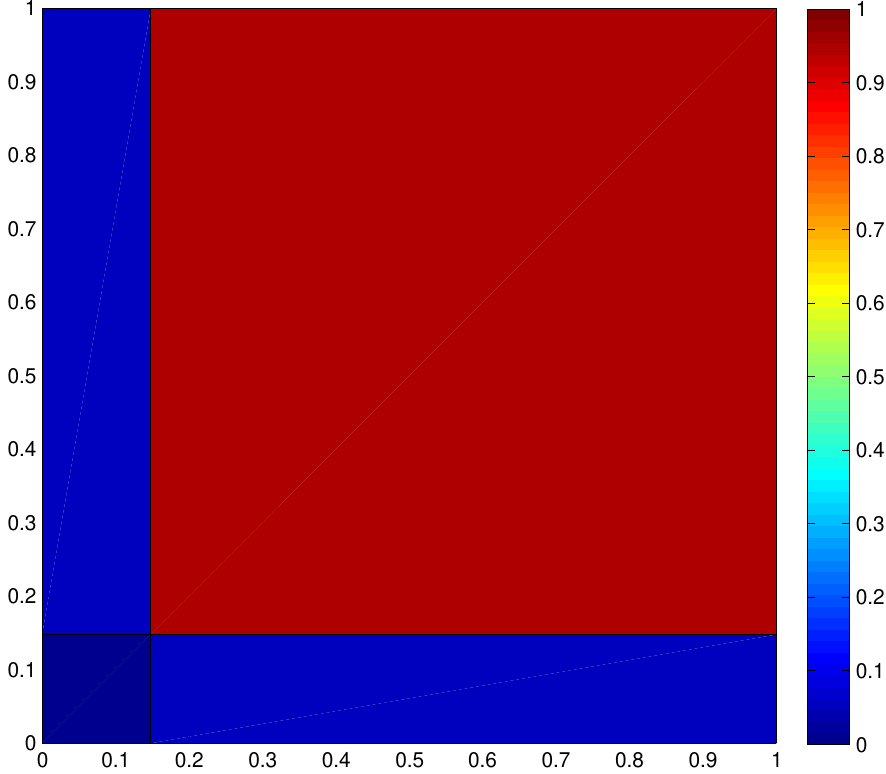}\\
\includegraphics[angle=0,width=0.23\textwidth]{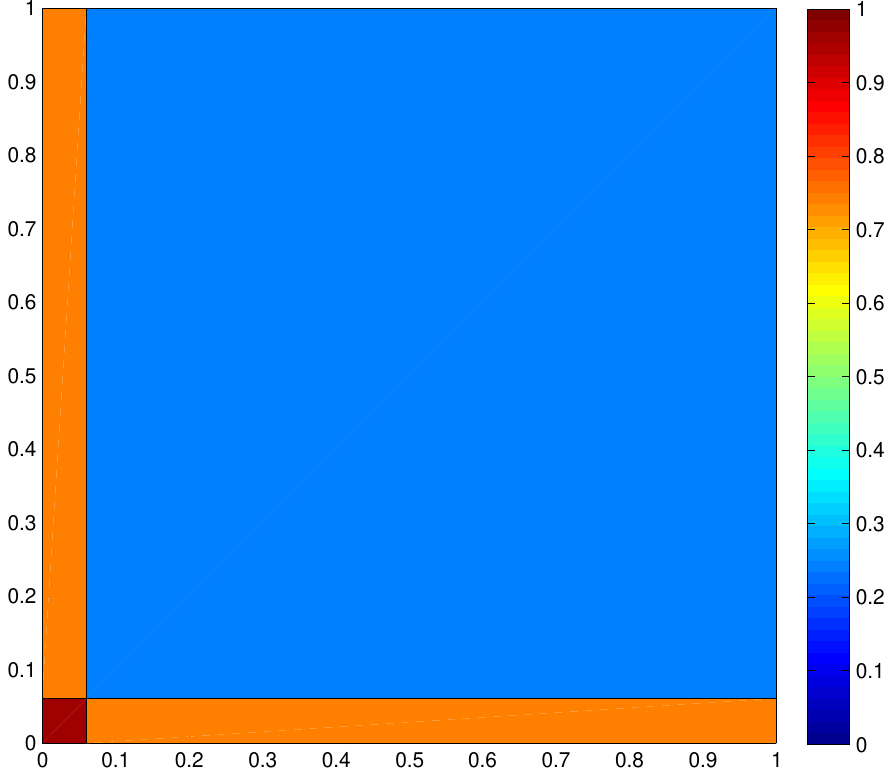}
\includegraphics[angle=0,width=0.23\textwidth]{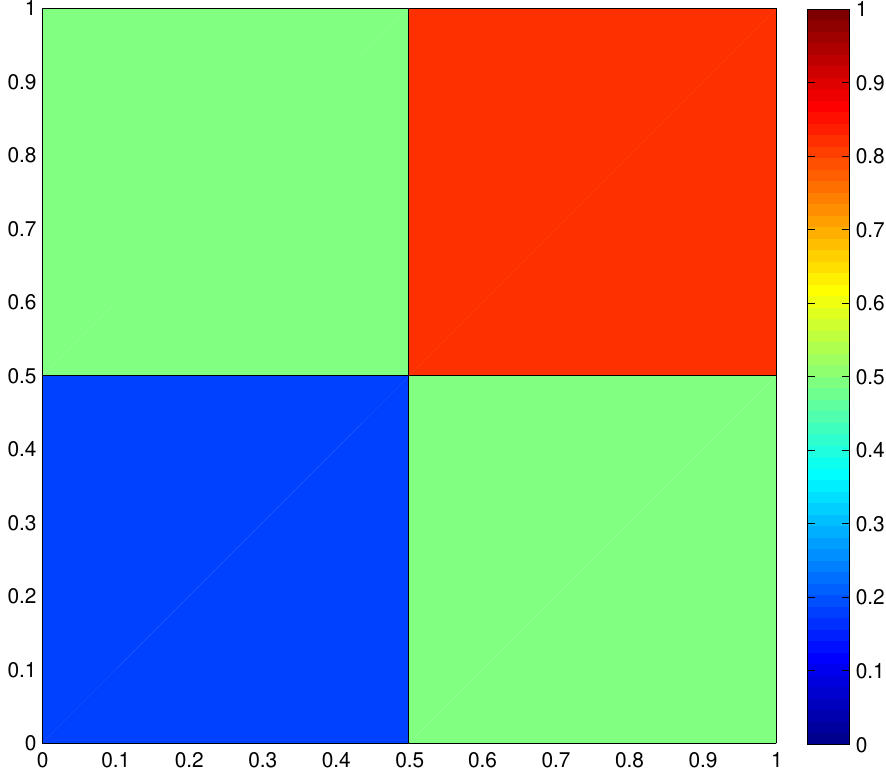}
\includegraphics[angle=0,width=0.23\textwidth]{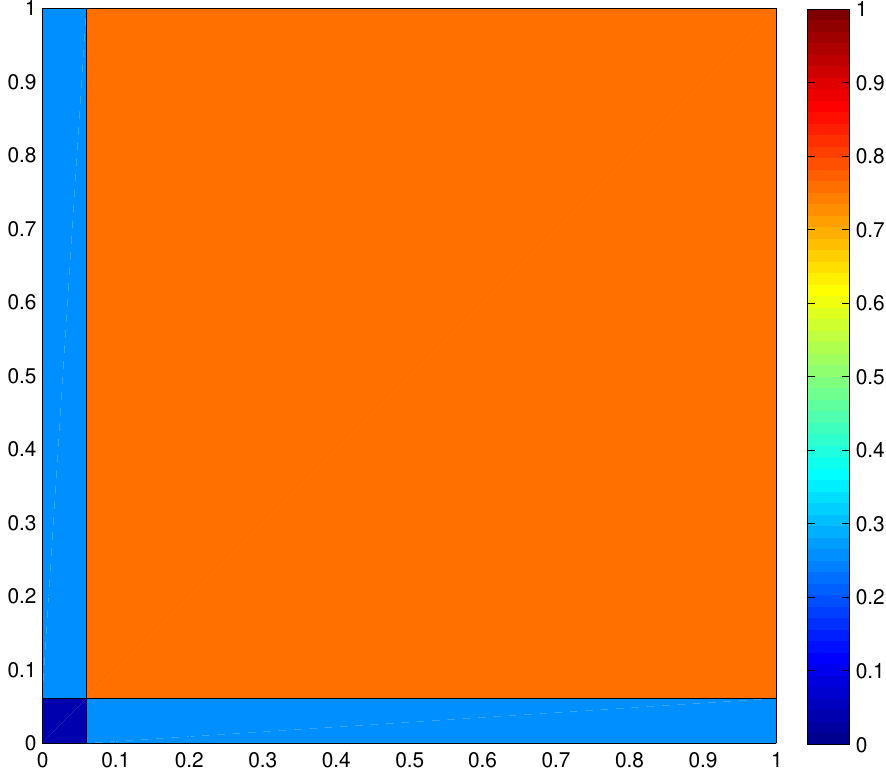}
\caption{Maximizing graphons at $\E=0.3$ (left column),
  $\E=0.5$ (middle column) and $\E=0.7$ (right column). For each
  column $\Tt$ values decrease from top to bottom.}
\label{FIG:Graphons}
\end{figure}

The values of $s$ corresponding to the maximizing graphons are shown in the left plot of Fig.~\ref{FIG:Phase-S} for a fine grid of $(\E,\sigma^2)$ (with $\sigma^2=\Tt-\E^2$ as defined in Fig.~\ref{FIG:Phase-Boundary}) pairs in the phase space. We first observe that the plot is symmetric with respect to $\E=1/2$. The symmetry comes from the fact {(see the proof of Theorem \ref{phase_theorem})} 
that the map $g\to 1-g$ takes $\E\to 1-\E$, $\Tt\to 1-2\E+\Tt$ and thus $\sigma^2\to \sigma^2$. To visualize the landscape of $s$ better in the phase space, we also show the cross-sections of $s_{(\E,\Tt)}(\E,\sigma^2)$ along the lines $\E_k=0.05 k$, $k=7,\cdots,13$, in the right plots of Fig.~\ref{FIG:Phase-S}.
\begin{figure}[!ht]
\begin{minipage}{0.49\textwidth}
\begin{center}
\includegraphics[angle=0,width=0.98\textwidth]{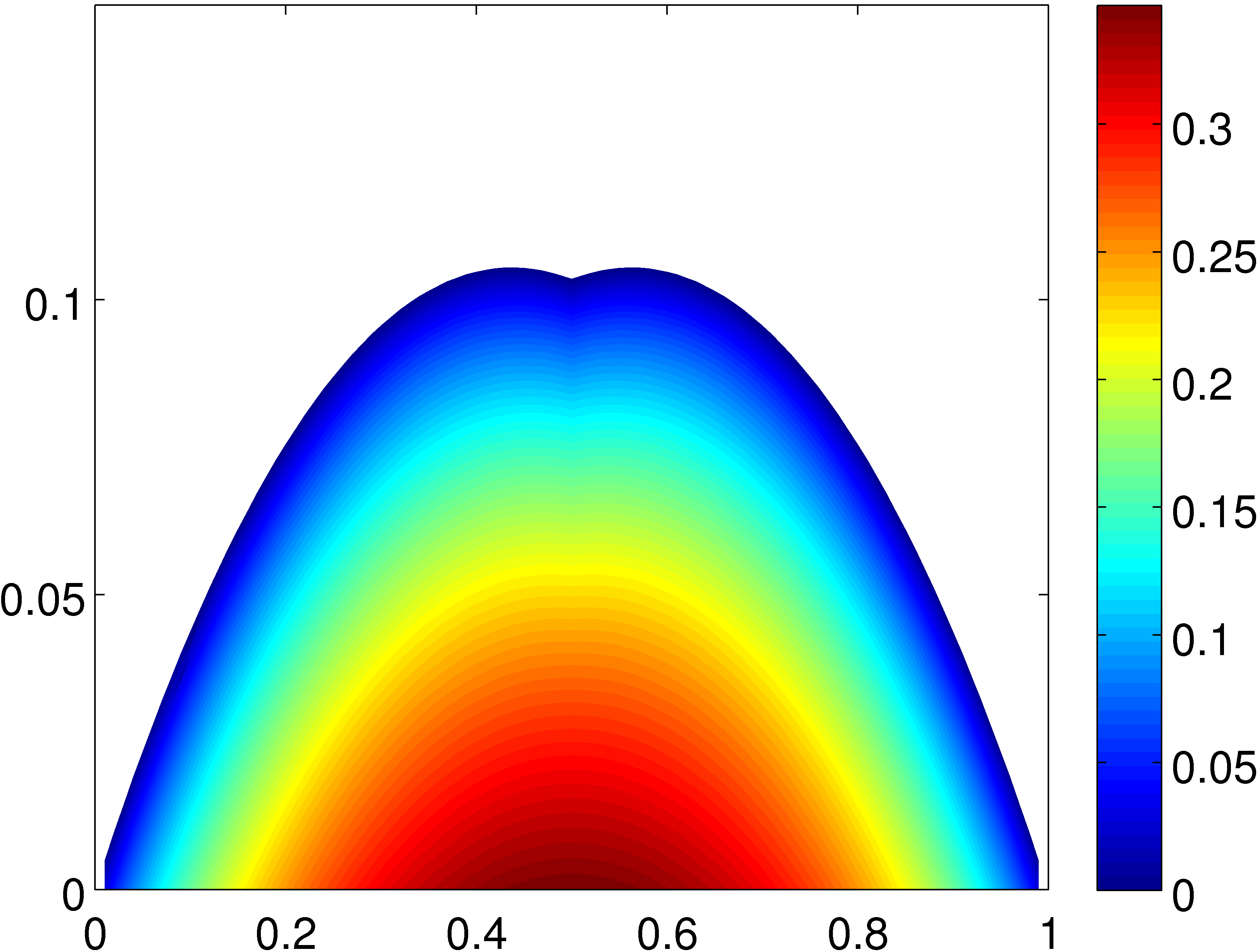}
\end{center}
\end{minipage}
\begin{minipage}{0.49\textwidth}
\begin{center}
\includegraphics[angle=0,width=0.32\textwidth]{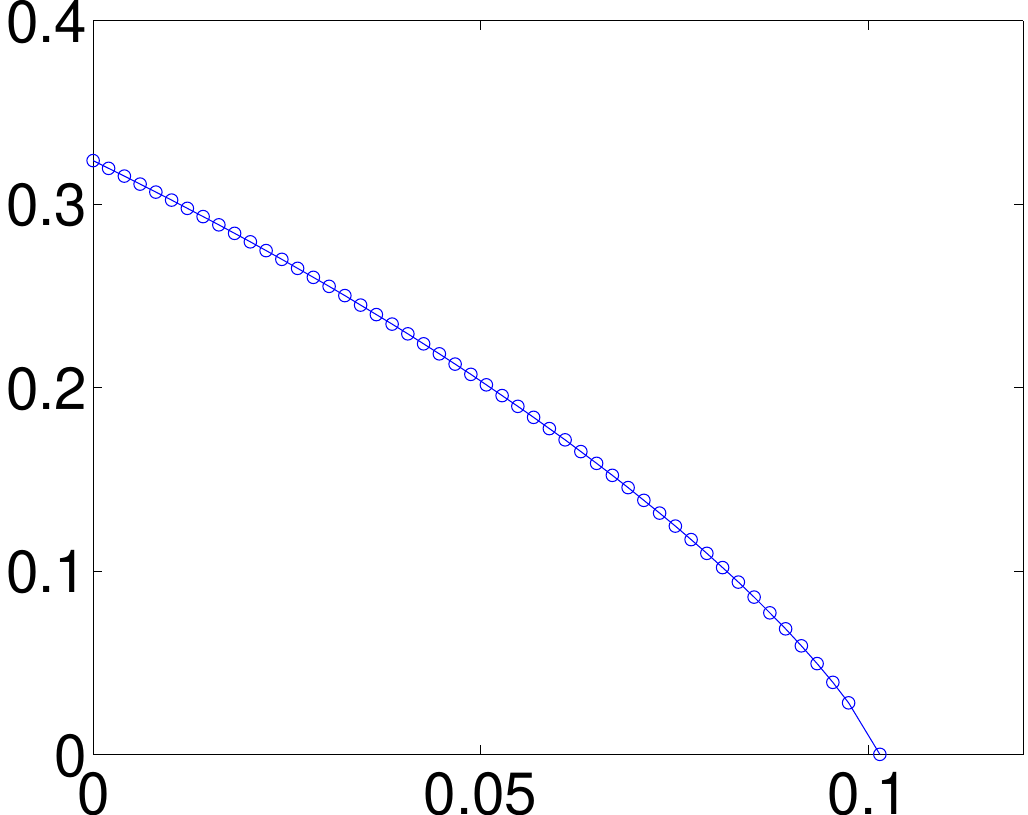} 
\includegraphics[angle=0,width=0.32\textwidth]{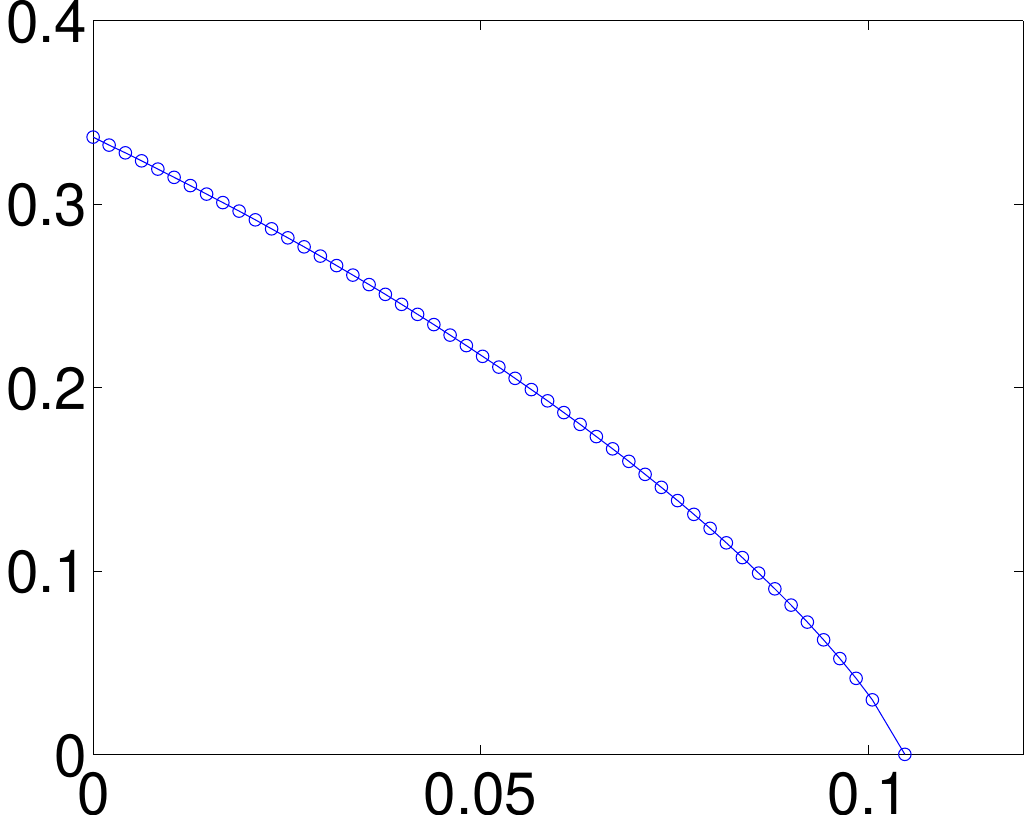} 
\includegraphics[angle=0,width=0.32\textwidth]{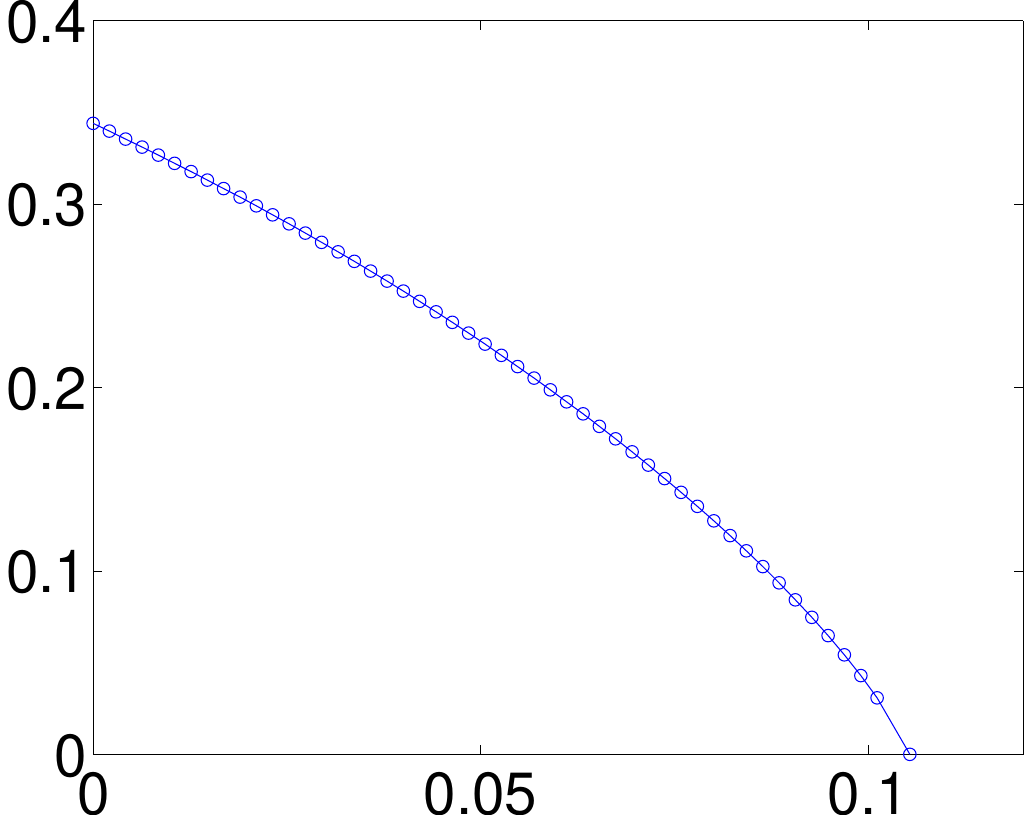}\\ 
\includegraphics[angle=0,width=0.32\textwidth]{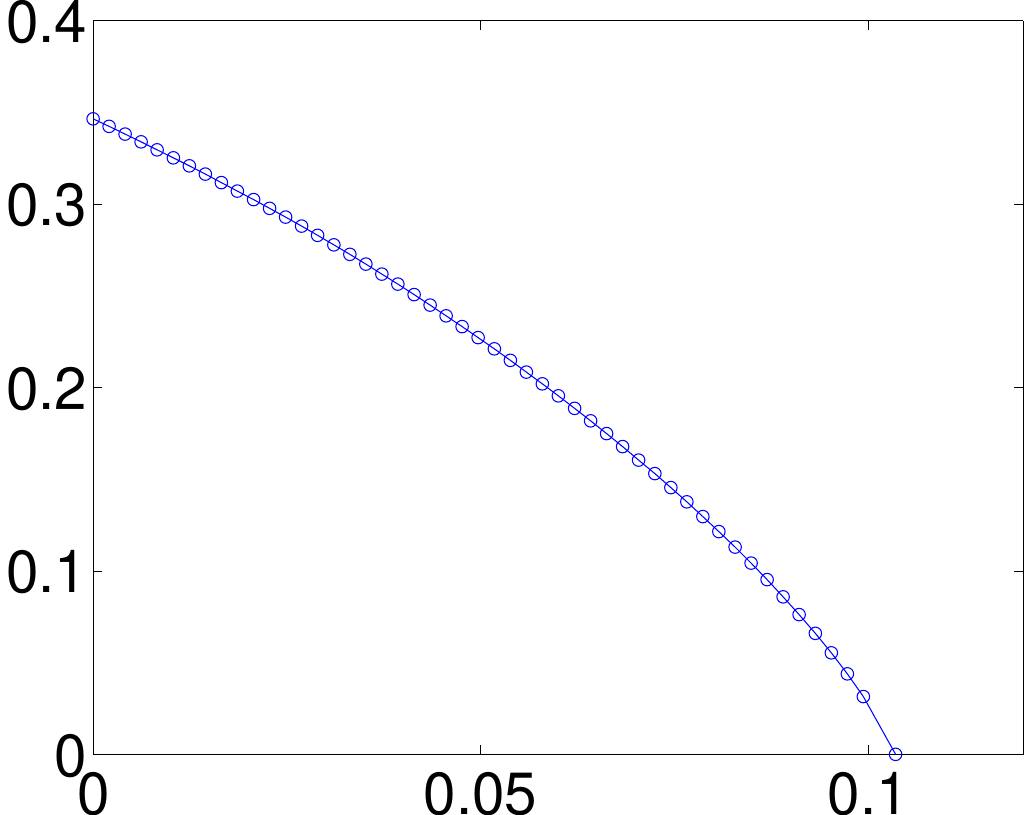}\\
\includegraphics[angle=0,width=0.32\textwidth]{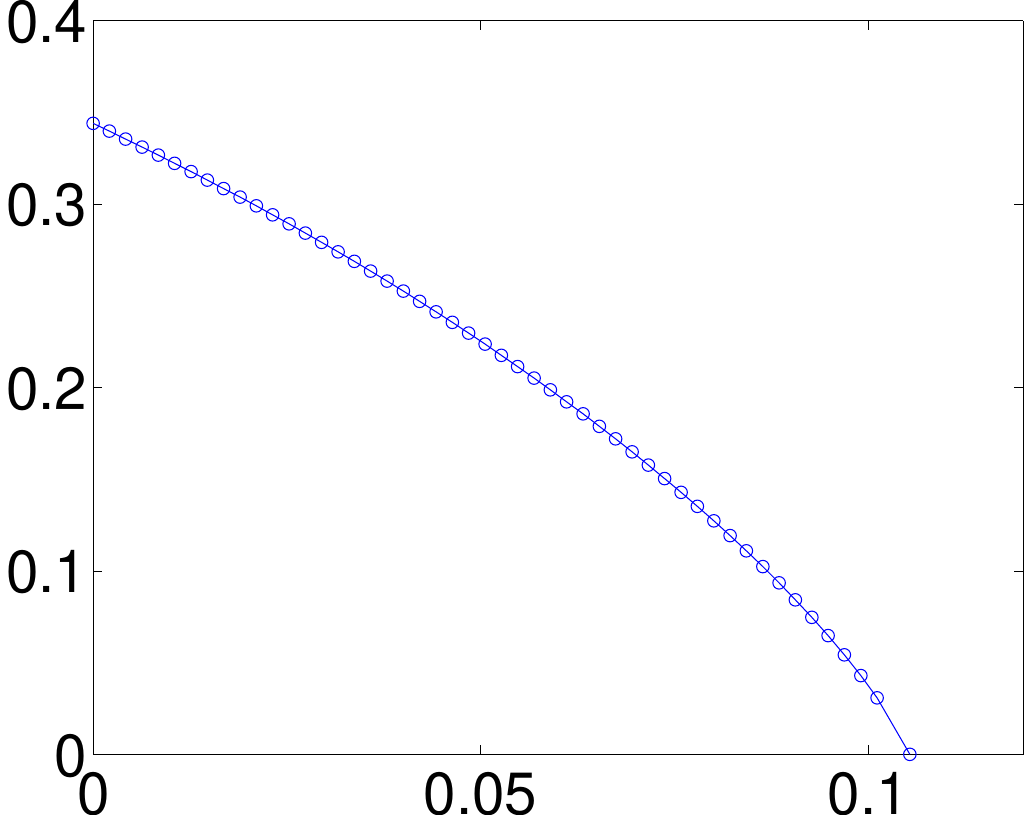} 
\includegraphics[angle=0,width=0.32\textwidth]{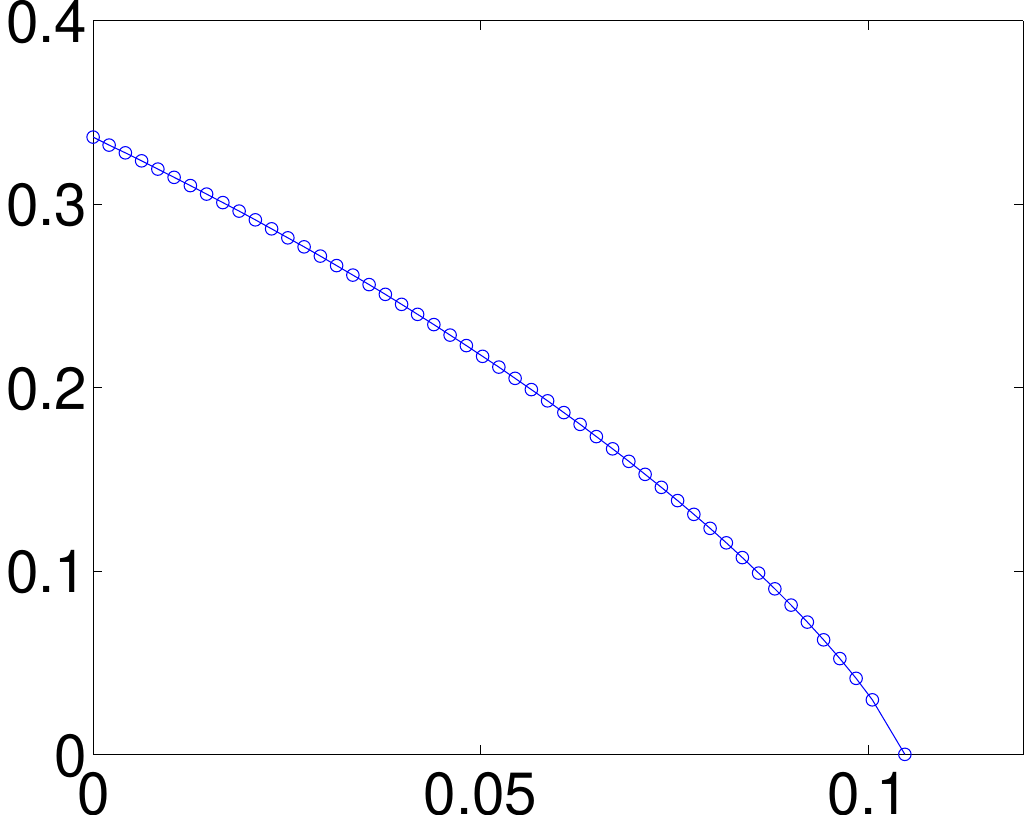} 
\includegraphics[angle=0,width=0.32\textwidth]{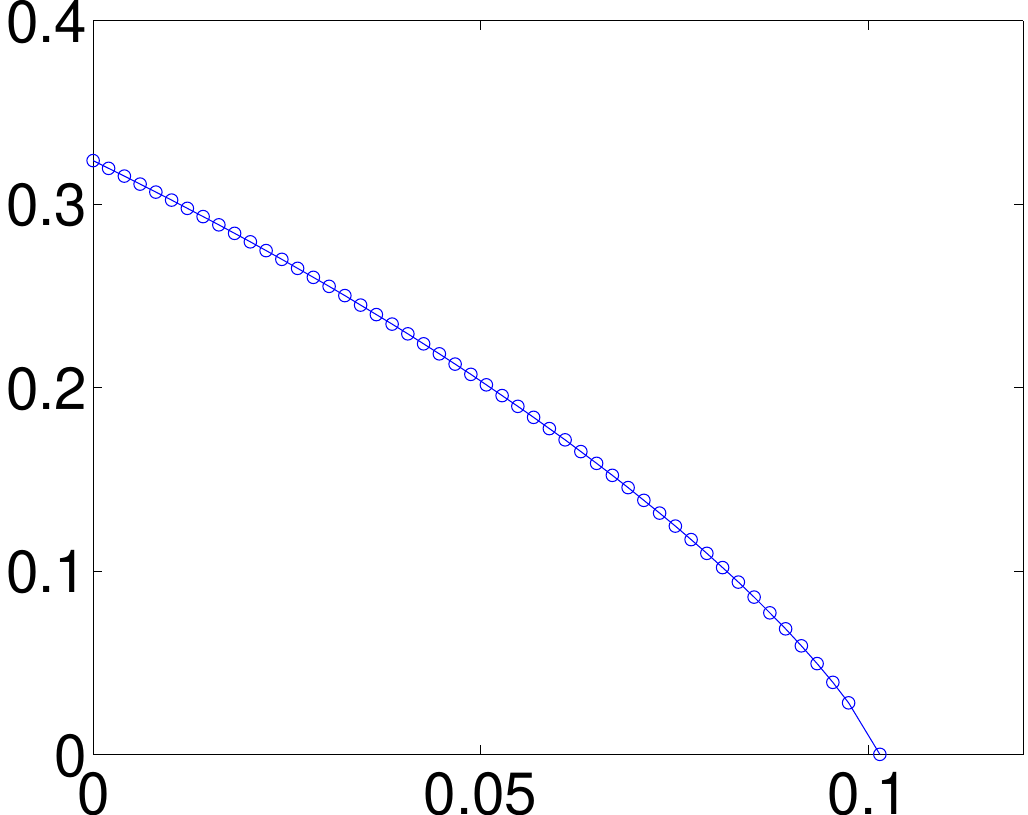} 
\end{center}
\end{minipage}
\put(-464,7){${\boldsymbol\sigma^2}$}
\put(-361,-88){${\boldsymbol\E}$}
\caption{Left: values of $s_{(\E,\Tt)}$ at different $(\E,\sigma^2)$ pairs; Right: cross-sections of $s_{(\E,\Tt)}(\E,\sigma^2)$ along lines $\E=\E_k=0.05 k$ ($k=7,\cdots, 13$) (from top left to bottom right).}
\label{FIG:Phase-S}
\end{figure}

We show in the left plot of Fig.~\ref{FIG:Phase-C} the values of $c_1$ of the maximizing graphons as a function of the pair $(\E,\sigma^2)$. Here we associate $c_1$ with the set of vertices among $V_1$ and $V_2$ that has the larger probability of an interior edge. This is done to avoid the ambiguity caused by the fact that one can relabel $V_1$, $V_2$ and exchange  $c_1$ and $c_2$ to get an equivalent graphon with the same $\E$, $\Tt$ and $\S$ values. We again observe the symmetry with respect to $\E=1/2$. The cross-sections of $c_1(\E,\sigma^2)$ along the lines of $\E_k=0.05 k$ ($k=7,\cdots, 13$) are shown in the right plots of Fig.~\ref{FIG:Phase-C}.
\begin{figure}[!ht]
\begin{minipage}{0.49\textwidth}
\begin{center}
\includegraphics[angle=0,width=0.98\textwidth]{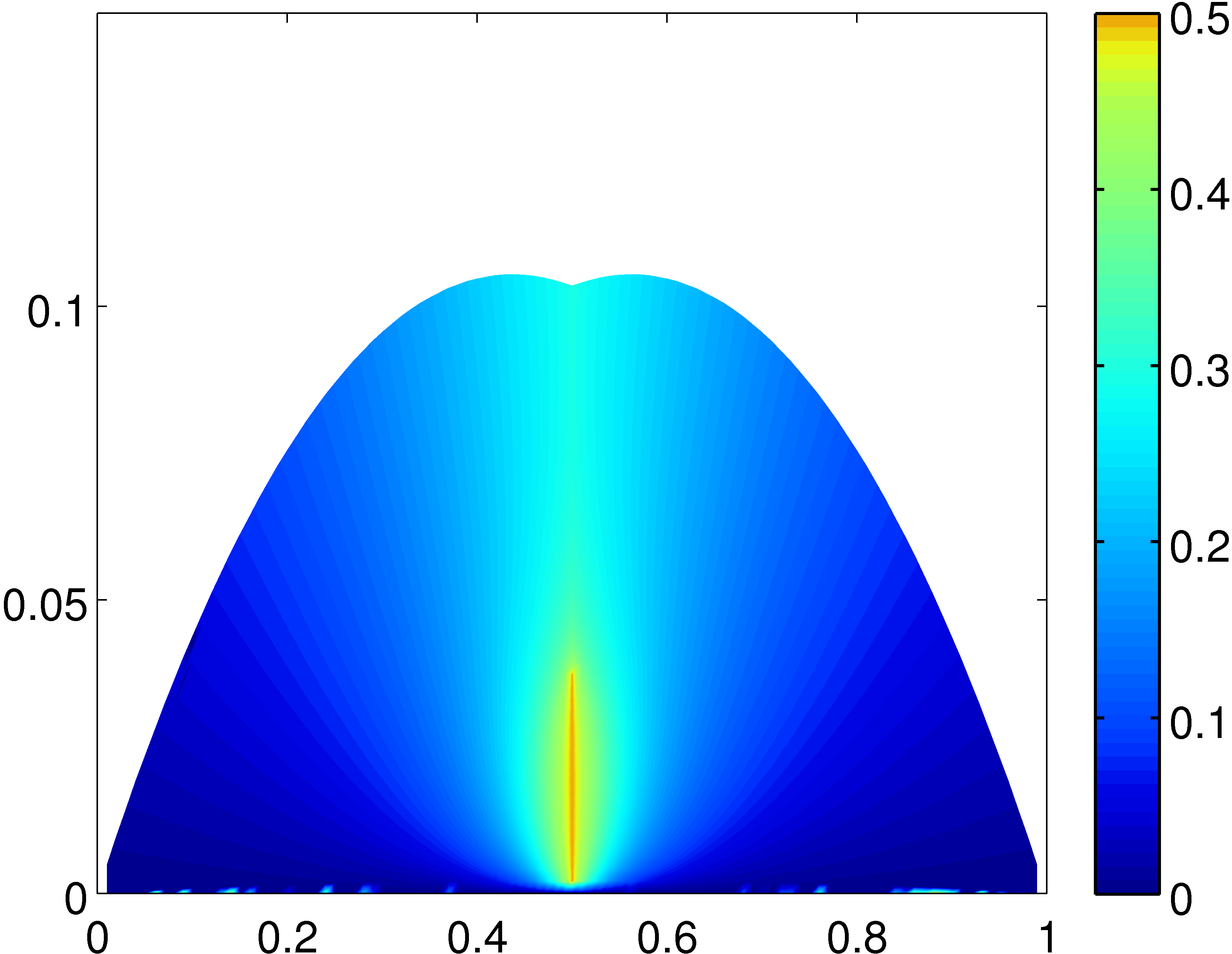}
\end{center}
\end{minipage}
\begin{minipage}{0.49\textwidth}
\begin{center}
\includegraphics[angle=0,width=0.32\textwidth]{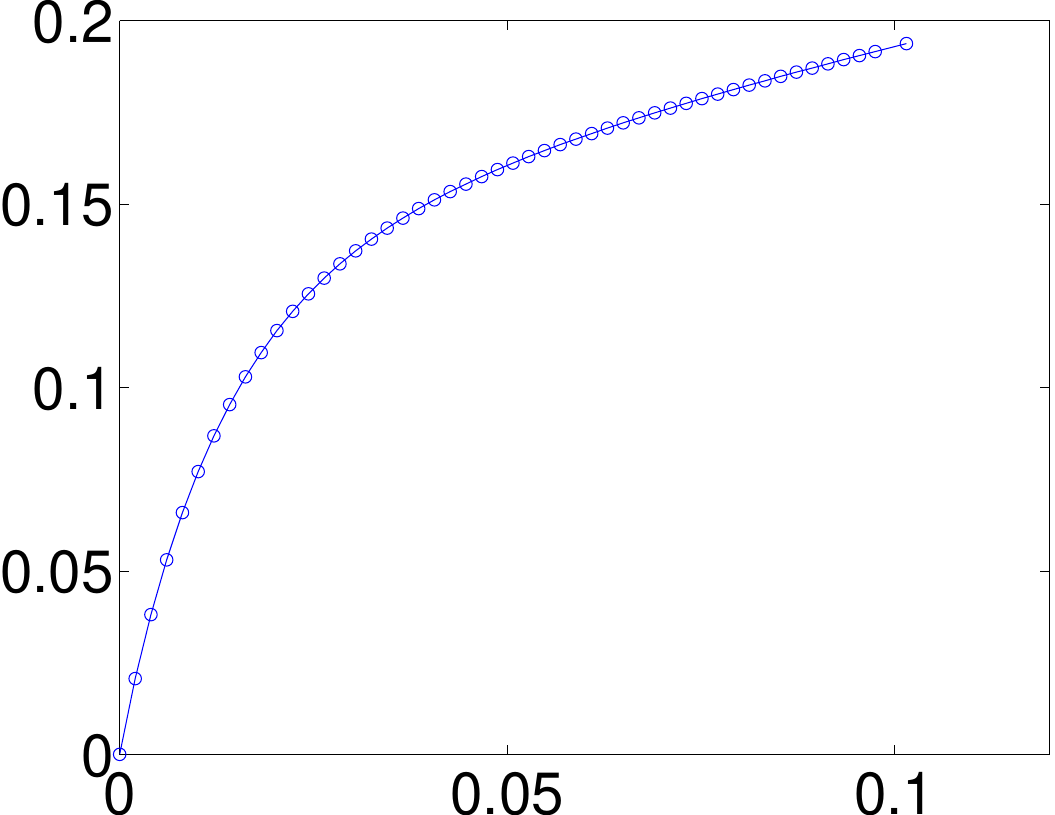} 
\includegraphics[angle=0,width=0.32\textwidth]{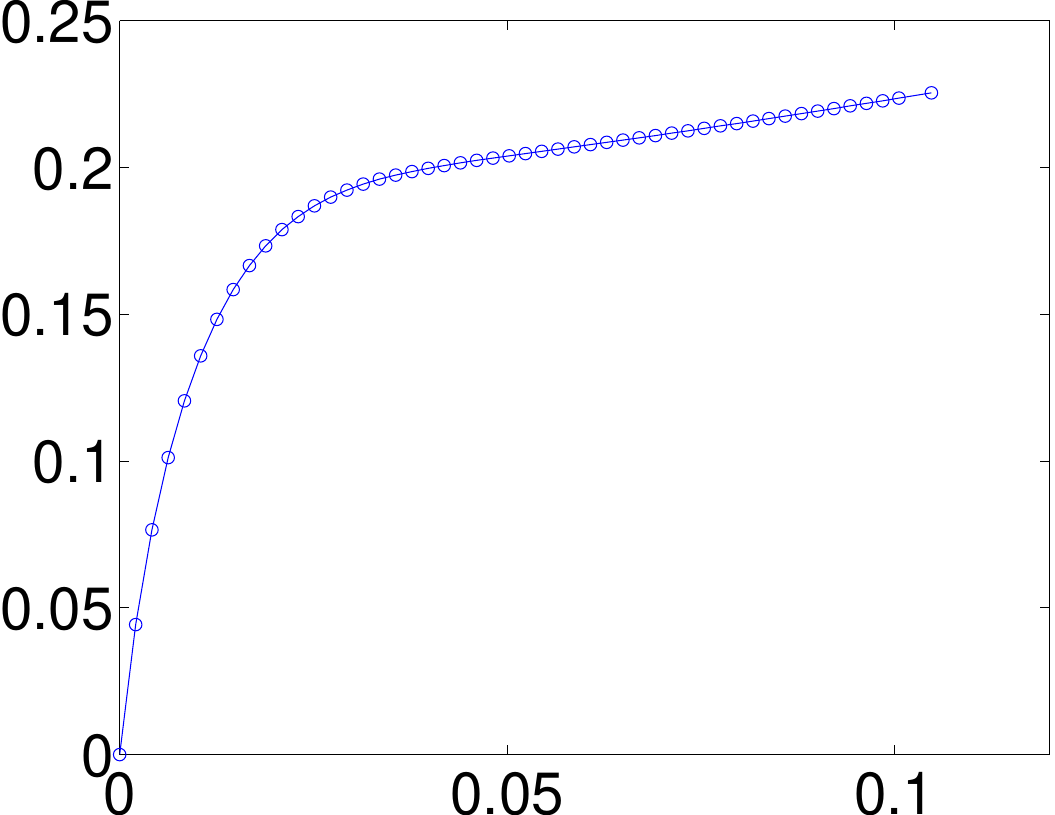} 
\includegraphics[angle=0,width=0.32\textwidth]{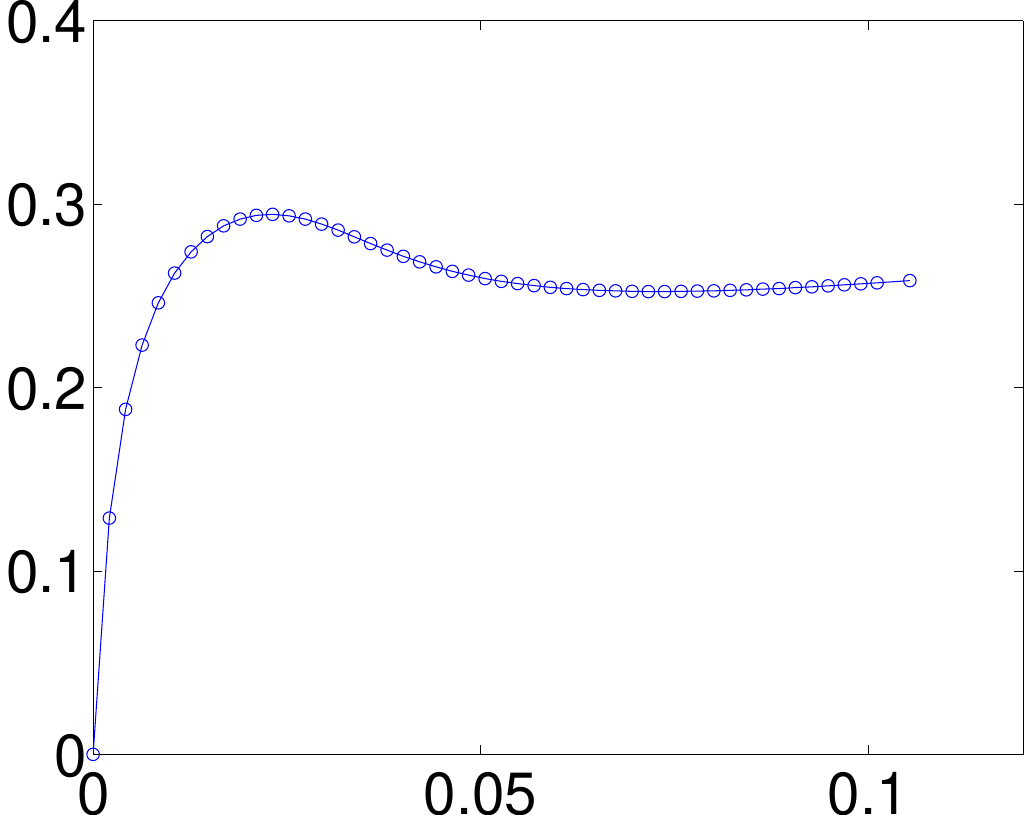}\\ 
\includegraphics[angle=0,width=0.32\textwidth]{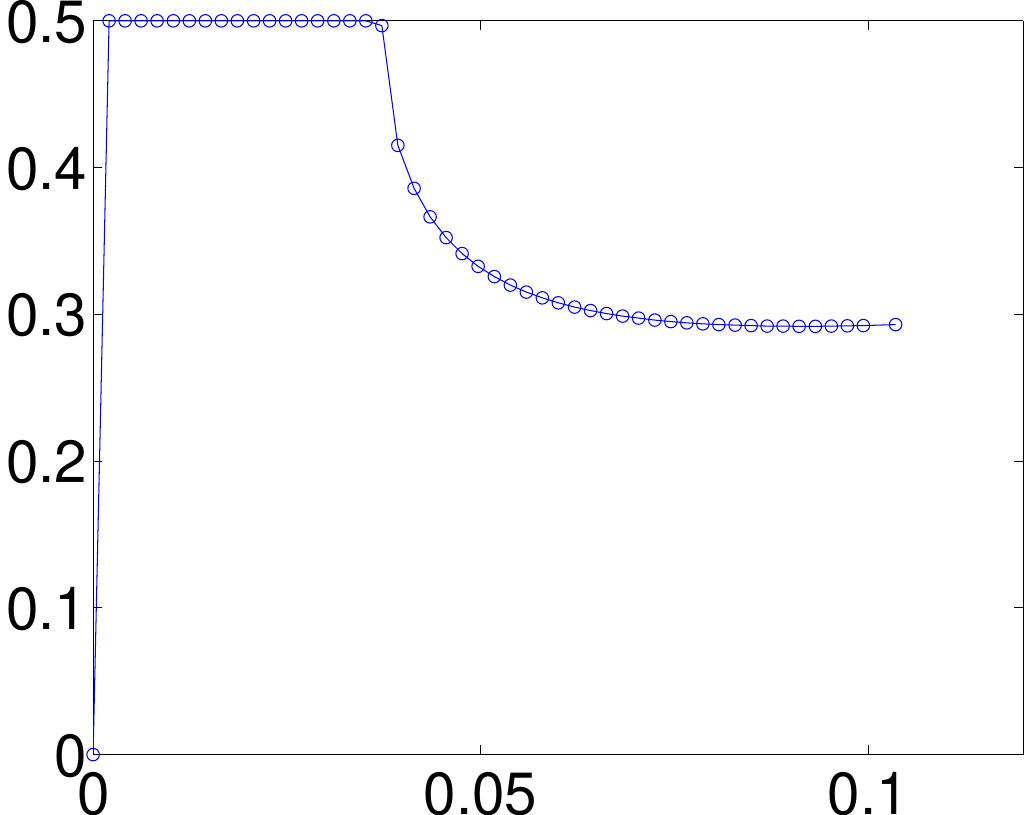}\\
\includegraphics[angle=0,width=0.32\textwidth]{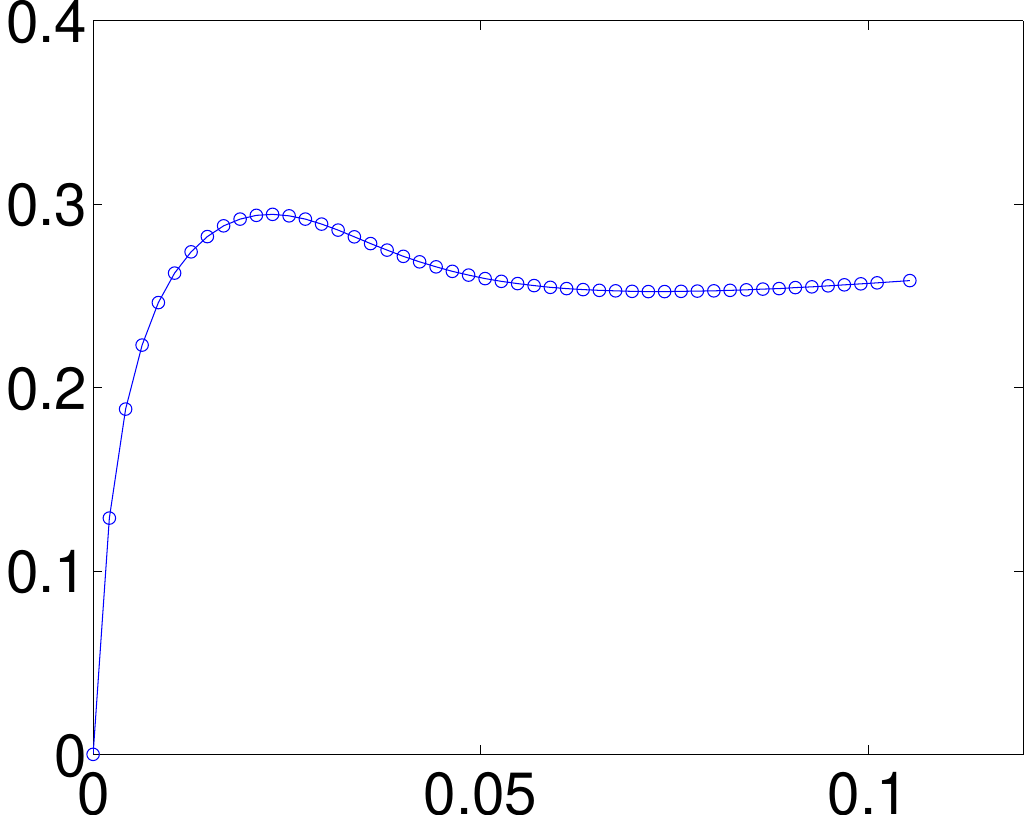} 
\includegraphics[angle=0,width=0.32\textwidth]{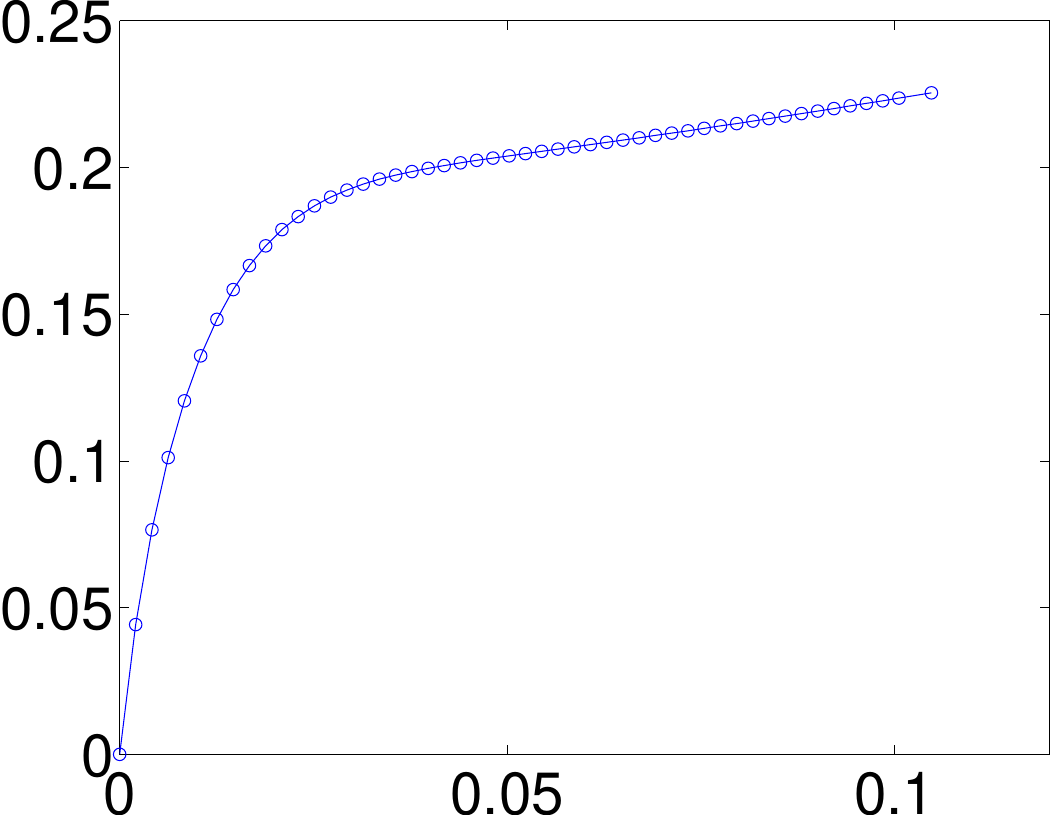} 
\includegraphics[angle=0,width=0.32\textwidth]{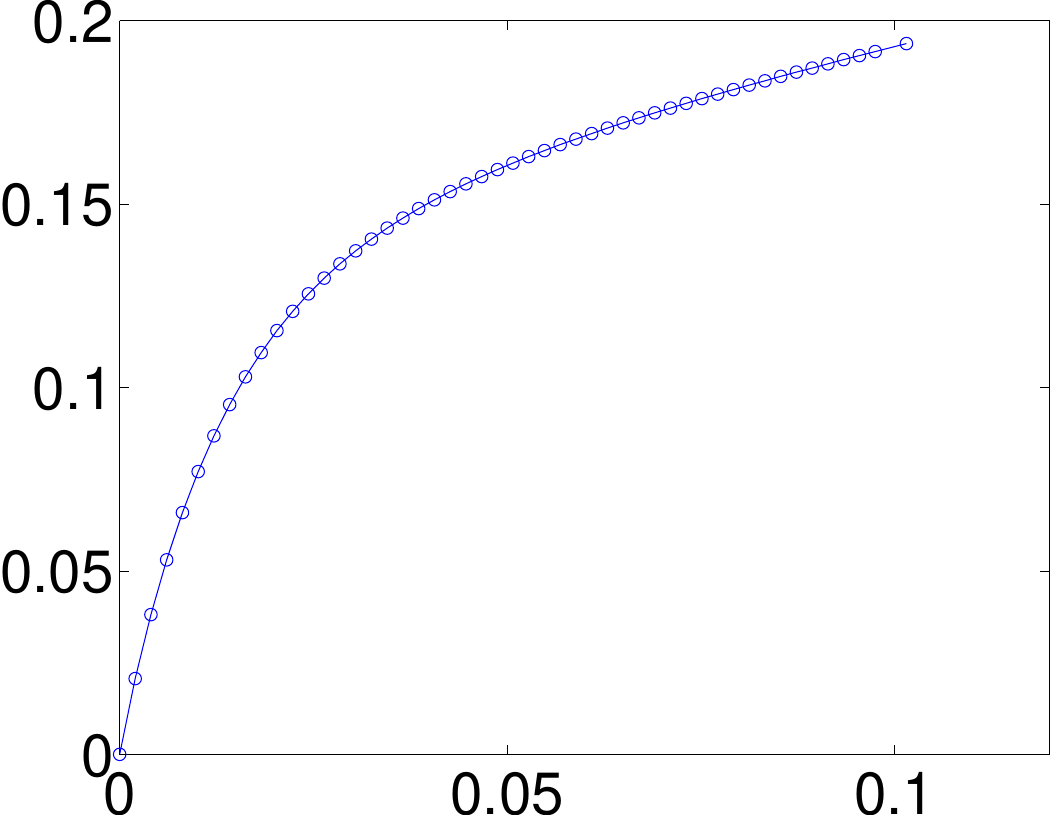} 
\end{center}
\end{minipage}
\put(-464,7){${\boldsymbol\sigma^2}$}
\put(-358,-88){${\boldsymbol\E}$}
\caption{Left: $c_1$ of the maximizing bipodal graphons as a function of $(\E,\sigma^2)$; Right: cross-sections of $c_1(\E,\sigma^2)$ along lines of $\E_k=0.05 k$ ($k=7,\cdots, 13$).}
\label{FIG:Phase-C}
\end{figure}

The last set of numerical simulations were devoted to the study of a
phase transition in the 2-star model. The existence of this phase
transition is suggested by the degeneracy in Theorem~\ref{phase_theorem}. Our numerical simulations indicate that the functions differ to first order in $\E-1/2$, and that the actual entropy $s_{(\E,\Tt)} = \max\{{s^L}_{(\E,\Tt)}, s^R_{(\E,\Tt)}\}$ has a discontinuity in $\partial_\E s_{(\E,\Tt)}$ at $\E=1/2$ above a critical value $\Tt^c$. Below $\Tt^c$, there is a single maximizer, of the form
\begin{equation}\label{symmetric_graphon}
g(x,y) = \begin{cases} \frac{1}{2} + \nu & x,y < \frac{1}{2} \cr
\frac{1}{2} - \nu & x,y > \frac{1}{2} \cr
\frac{1}{2} & \hbox{otherwise} \end{cases}
\end{equation}
Here $\nu$ is a parameter related to $\Tt$ by $\Tt = 1/4 + {\nu^2}/{4}$. Applying the symmetry $g \to 1-g$ and reordering the interval $[0,1]$ by $x \to 1-x$ sends $g$ to itself. 

The critical point $\Tt^c$ is located on the boundary of the region in which the maximizer~\eqref{symmetric_graphon} is stable. The value of $\Tt^c$ can be found by computing the second variation of $\S(g)$ within the space of bipodal graphons with fixed values of $(\E=\frac{1}{2},\Tt)$, evaluated at the maximizer~\eqref{symmetric_graphon}. This second variation is positive-definite for $\nu$ small ({\it i.e.} for $\Tt$ close to $1/4$) and becomes indefinite for larger values of $\nu$. At the critical value of $\Tt^c$, $\nu = 2\sqrt{\Tt^c - 1/4}$ satisfies 
\begin{equation} 
\left ( 2S(\frac{1}{2}-\nu) -2S(\frac{1}{2}) + 3 \nu S'(\frac{1}{2}-\nu) \right ) (2 -\frac{1}{2} S''(\frac{1}{2}-\nu))
+ 8 \nu^2 S''(\frac{1}{2}-\nu) = 0
\end{equation}
where $S'$ and $S''$ are respectively the first and second order derivatives of $S(g)$ (defined in~\eqref{EQ:Shannon}) with respect to $g$. This equation is transcendental, and so cannot be solved in closed form. Solving it numerically for $\nu$ leads to the value $\Tt^c\approx 0.287$, or $\sigma^2 \approx 0.037$. This agrees precisely with what we previously observed in our simulations of optimizing graphons, and corresponds to the point in the left plot Fig.~\ref{FIG:Phase-C} where the $c_1=1/2$ region stops.  

\begin{figure}[ht]
\centering
\includegraphics[angle=0,width=0.48\textwidth]{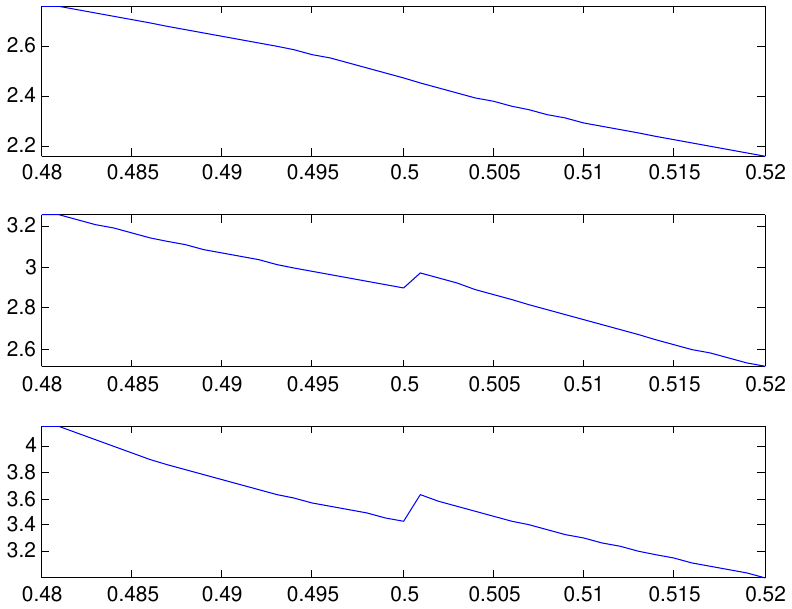} 
\includegraphics[angle=0,width=0.48\textwidth]{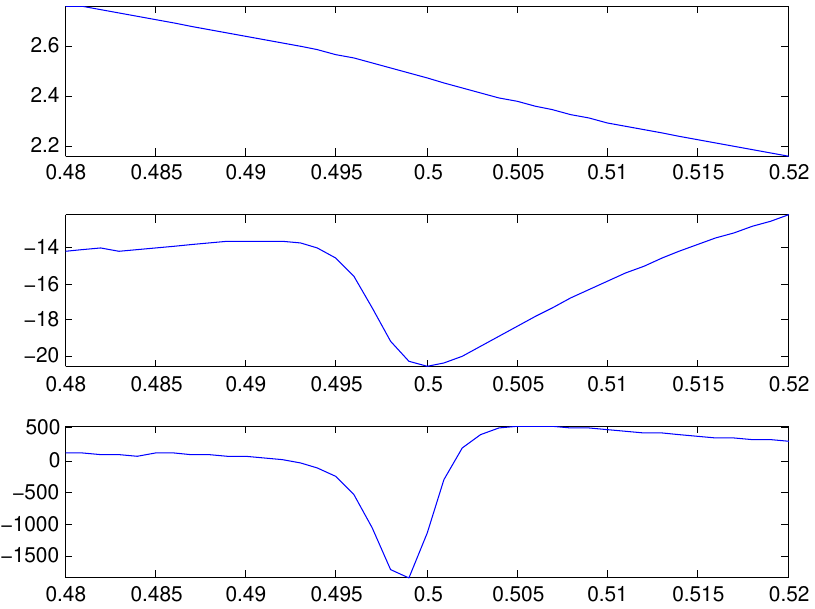} 
\caption{Left: the derivative $\frac{\partial s_{(\E,\Tt)}}{\partial \E}$ at $\Tt=0.28$ (top), $\Tt=0.30$ (middle) and $\Tt=0.32$ (bottom) in the neighborhood of $\E=0.5$; Right: the derivatives $\frac{\partial s_{(\E,\Tt)}}{\partial \E}$ (top), $\frac{\partial^2 s_{(\E,\Tt)}}{\partial \E^2}$ (middle) and $\frac{\partial^3 s_{(\E,\Tt)}}{\partial \E^3}$ (bottom) in the neighborhood of $\E=0.5$ for $\Tt=0.28$.}
\label{FIG:PhaseTrans}
\end{figure}

In the left plot of Fig.~\ref{FIG:PhaseTrans}, we show numerically computed derivatives of $s_{(\E,\Tt)}$ with respect to $\E$ in the neighborhood of $\E=0.5$ for three different values of $\Tt$: one below the critical point and two above it. It is clear that discontinuities in the first order derivative of $s$ appears at $\E=0.5$ for $\Tt>\Tt^c$. When $\Tt<\Tt^c$, we do not observe any discontinuity in the first three derivatives of $s$.

\section{A Finitely Forced Model}
\label{finitely_forced}

We have shown that, in the interior of the phase space, entropy
maximizers with edge and $k$-star densities as constraints are multipodal. It is known from extremal graph theory that this is not true in general on the boundary of the phase space. We now briefly look at this issue using the concept of finitely forcible graphons introduced in~\cite{LS3}.

Let $h(x,y)$ be any doubly monotonic function with 0 as a regular value, and consider the graphon
\begin{equation}\label{EQ:Gen Tria} 
g(x,y) = \begin{cases} 1 & h(x,y)>0 \cr 0 & h(x,y) <0 \end{cases}.
\end{equation}
Then it is shown in~\cite{LS3} that for this graphon, the density of
the signed quadrilateral subgraph $Q$ (with signs going around the quadrilateral as $+$, $-$, $+$, $-$), is zero. In other words,
\begin{equation}\label{EQ:Quad Den} 
	t_Q(g) = \int g(w,x) [1-g(x,y)]g(y,z) [1-g(w,z)] dw\,dx\,dy\,dz =0,
\end{equation}
where we have labelled the four vertices as $w ,x, y, z$. It is straightforward to verify that $t_Q=t_1^2-2\tilde t_3+\tilde t_4$ where $\tilde t_3$ is the density of 3-chains while $\tilde t_4$ is the density of the un-signed quadrilateral:
\begin{equation}
\begin{array}{rcl}
	\tilde t_3(g) &=& \int g(w,x) g(x,y)g(y,z) dw\,dx\,dy\,dz, \\
        \tilde t_4(g) &=& \int g(w,x) g(x,y)g(y,z) g(z,w) dw\,dx\,dy\,dz .
\end{array}
\end{equation}

The triangle graphon $g_T$ defined by
\begin{equation}\label{EQ:Tri Graphon}
g_T(x,y) = \begin{cases} 1 & x+y > 1 \cr 0 & x+y < 1 \end{cases}
\end{equation}
is a special case of~\eqref{EQ:Gen Tria} with $h(x,y)=x+y-1$. For the triangle graphon we check that it has edge density $t_1=1/2$, 2-star density $t_2=1/3$, 3-chain density $\tilde t_3=5/24$ and quadrilateral density $\tilde t_4=1/6$. This clearly gives us $t_Q(g_T)=0$.

It is shown in~\cite{LS3} that {(up to rearranging vertices)} graphons of the form~\eqref{EQ:Gen Tria} are the only kind of graphon for which $t_Q(g)=0$. Moreover, among graphons of the form \eqref{EQ:Gen Tria} the density of edges minus the density of 2-stars is at most 1/6, and this upper bound is achieved uniquely by the graphon $g_T$. Thus the triangle graphon $g_T$ is finitely forcible with the two constraints:
\begin{enumerate}
\item $\zeta_1(g)\equiv t_1(g)-t_2(g)-\frac{1}{6}\ge 0$, i.e. the density of edges should be 1/6 greater than the density of 2-stars. 
\item $\zeta_2(g)\equiv t_Q(g)=t_1^2(g)-2\tilde t_3(g)+\tilde t_4(g)=0$, i.e. the density of the signed quadrilateral subgraph $Q$ should be zero.
\end{enumerate}

Here we look at a path toward the triangle graphon by considering the parameterized family of graphons $g_\alpha(x,y) = \alpha + (1-2\alpha) g_T(x,y)$ ($0\le\alpha\le 0.5$). We attempt to maximize the entropy among graphons that have the same values of ${t}=(t_1, t_2, \tilde t_3, \tilde t_4)$ as $g_\alpha$. We first check that 
\begin{eqnarray} 
t_1 & = & 1/2 \cr 
t_2 & = & (1-\alpha+\alpha^2)/3 \cr 
\tilde t_3 & = & (5-8\alpha+8\alpha^2)/24 \cr 
\tilde t_4 & = & (1-3\alpha+5\alpha^2-4\alpha^3+2\alpha^4)/6 .
\end{eqnarray}
This gives $\zeta_1(g_\alpha) =\alpha(1-\alpha)/3$ and $\zeta_2(g_\alpha) = \alpha(1+\alpha-4\alpha^2+2\alpha^3)/6$. 

We can show that by enforcing the densities $(t_1, t_2, \tilde t_3, \tilde t_4)$, we are approaching the triangle graphon from the interior of the profile when we let $\alpha\to 0$, as stated in the following theorem.
\begin{theorem} 
For any $\alpha>0$, the values of $t$ lie in the interior of the profile.  
\end{theorem}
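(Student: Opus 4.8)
The plan is to prove that $g_\alpha$ is a regular point of the density map, so that graphons near $g_\alpha$ realize a full four-dimensional neighborhood of $t(g_\alpha)$. Write $\Phi(g)=(t_1(g),t_2(g),\tilde t_3(g),\tilde t_4(g))$, so that $t(g_\alpha)=\Phi(g_\alpha)$, and fix $0<\alpha<1/2$ (at $\alpha=1/2$ one has $t_2=t_1^2$, i.e. $g_\alpha$ sits on the lower boundary). Since $g_\alpha=\alpha+(1-2\alpha)g_T$ takes only the two values $\alpha$ and $1-\alpha$, both bounded away from $\{0,1\}$, the perturbation $g_\alpha+\varepsilon h$ is again a graphon for every bounded symmetric $h$ and all small $\varepsilon$. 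It therefore suffices to show that the four functional derivatives $\phi_i:=\delta t_i/\delta g$, evaluated at $g_\alpha$, are linearly independent as functions on $[0,1]^2$. Granting this, I would choose mass-moving perturbations $h_1,\dots,h_4$ (exactly as in the proof of Lemma \ref{lagrange1}) so that the $4\times4$ matrix $[\langle\phi_i,h_j\rangle]$ is invertible; the map $(s_1,\dots,s_4)\mapsto\Phi(g_\alpha+\sum_j s_j h_j)$ then has invertible Jacobian at the origin, and the inverse function theorem shows that its image contains a neighborhood of $t(g_\alpha)$, placing $t(g_\alpha)$ in the interior of the profile.

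First I would record the four derivatives. From $d_T(x)=x$ one gets the affine degree function $d_\alpha(x)=\alpha+(1-2\alpha)x$, and a direct first variation gives $\phi_1=1$, $\phi_2(x,y)=d_\alpha(x)+d_\alpha(y)$, $\phi_3(x,y)=d_\alpha(x)d_\alpha(y)+D(x)+D(y)$ where $D(x)=\int_0^1 d_\alpha(y)g_\alpha(x,y)\,dy$, and $\phi_4(x,y)=4\,g_\alpha^{(3)}(x,y)$, the kernel of the operator cube of $g_\alpha$. A short computation shows $D$ is a genuine quadratic polynomial, so $\phi_1,\phi_2,\phi_3$ are honest polynomials in $(x,y)$ of degrees $0$, $1$, $2$; moreover the degree-two part of $\phi_3$ contains the monomial $(1-2\alpha)^2xy$, nonzero for $\alpha\neq1/2$, so $\phi_1,\phi_2,\phi_3$ are already linearly independent among themselves.

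The crux is to show $\phi_4$ is not a polynomial, hence not in $\mathrm{span}(\phi_1,\phi_2,\phi_3)$. Here I would compute the iterated kernels. Writing $q=1-2\alpha$ and $g_\alpha(x,y)=\alpha+q\,\mathbf{1}_{x+y>1}$, one finds $g_\alpha^{(2)}(x,z)=\alpha^2+\alpha q(x+z)+q^2\min(x,z)$; carrying this through one more factor $g_\alpha(z,y)$ and integrating in $z$ produces the term $q^3\int_{1-y}^1\min(x,z)\,dz$, which equals $q^3xy$ on $\{x+y<1\}$ but $q^3\bigl(-\tfrac{x^2}{2}+x-\tfrac{(1-y)^2}{2}\bigr)$ on $\{x+y>1\}$. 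The two expressions agree on $x+y=1$, so $g_\alpha^{(3)}$ is continuous, but their second $x$-derivatives differ by $-q^3\neq0$, so $\partial_x^2 g_\alpha^{(3)}$ jumps across the line $x+y=1$. Every polynomial of degree $\le2$ has constant second derivatives, so $\phi_4$ is not such a polynomial, and the four derivatives are linearly independent.

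The main obstacle is precisely this last step: one must push the iterated-kernel computation far enough to exhibit the crease of $g_\alpha^{(3)}$ along $x+y=1$; the variation formulas, the Jacobian, and the inverse function theorem are then routine because $g_\alpha$ lies strictly inside $(0,1)$. As an independent sanity check I would note that the already-computed values $\zeta_1(g_\alpha)=\alpha(1-\alpha)/3>0$ and $\zeta_2(g_\alpha)=t_Q(g_\alpha)=\alpha(1+\alpha-4\alpha^2+2\alpha^3)/6>0$ show directly that $g_\alpha$ sits strictly off the two explicit boundary surfaces $\{t_Q=0\}$ and $\{\zeta_1=0\}$; the linear-independence argument is what upgrades this to genuine four-dimensional interiority in all directions.
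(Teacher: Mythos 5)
Your proposal is correct and follows essentially the same route as the paper's proof: reduce the claim to linear independence of the four functional derivatives (valid because $g_\alpha$ is bounded away from $0$ and $1$), observe that $\delta t_1/\delta g$, $\delta t_2/\delta g$, $\delta\tilde t_3/\delta g$ are polynomials of degrees $0,1,2$, and show $\delta\tilde t_4/\delta g$ fails to be such a polynomial because it is not smooth across the line $x+y=1$. Your explicit iterated-kernel computation exhibiting the jump in $\partial_x^2 g_\alpha^{(3)}$ (and your caveat excluding $\alpha=1/2$) is a slightly more detailed version of the paper's argument, which instead checks non-analyticity at $\alpha=0$ by noting the 3-chain probability equals $xy$ for $x+y<1$ and is strictly smaller for $x+y>1$.
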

\begin{proof} 
Since the graphon $g_a$ is strictly between 0 and 1, it is enough to show that the four functional derivatives, $\delta t_1/\delta g$, $\delta t_2/\delta g$, $\delta \tilde t_3/\delta g$ and $\delta \tilde t_4/\delta g$, are linearly independent functions of $x$ and $y$, since then by varying $g$ we can change ${t}$ in any direction to first order.  A simple computation shows that $\delta  t_1/\delta g(x,y)$ is a constant, $\delta t_2/\delta g(x,y)$ is a linear polynomial in $x$ and $y$, and $\delta \tilde t_3/\delta g(x,y)$ is a quadratic polynomial with an $xy$ term as well as linear terms. These three are analytic and manifestly linearly independent.

However, $\delta \tilde t_4/\delta g(x,y)$ is not analytic across the line $x+y=1$, and so cannot be a linear combination of the first three functional derivatives. To see this, it is enough to consider the case of $\alpha=0$.  $\delta t_4/\delta g(x,y)$ is a multiple of the probability of $x$ being connected to $y$ via a 3-chain $y$-$z$-$w$-$x$. When $x+y<1$, this is exactly $xy$, since if $z>1-y$ and $w>1-x$ then $z+w$ is automatically greater than $1$. When $x+y>1$, the requirement that $z+w>1$ provides an extra condition, and the functional derivative is strictly less than $xy$.
\end{proof}

We can now show that if we try to fit the density $t$ with $M$-podal graphons, then $M$ blows up as $\alpha$ goes to zero. More precisely,
\begin{theorem} For each positive integer $M$ there is an $\epsilon_M>0$
  such that for $\alpha <\epsilon_M$ there are no $M$-podal graphons whose
  densities ${t}$ are the same as those of $g_\alpha$.
\end{theorem}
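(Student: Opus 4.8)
The plan is to argue by contradiction, combining compactness of the space of reduced graphons with the finite forcibility of the triangle graphon $g_T$ established above. Suppose the theorem fails for some fixed $M$. Then there is a sequence $\alpha_n \downarrow 0$ together with $M$-podal graphons $h_n$ satisfying $t(h_n) = t(g_{\alpha_n})$, where $t = (t_1, t_2, \tilde t_3, \tilde t_4)$. The crux is that any cut-metric limit of the $h_n$ would have to be simultaneously $M$-podal and equal to the (non-multipodal) graphon $g_T$, which is impossible.

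First I would extract a convergent subsequence. Since the space of reduced graphons is compact in the cut metric \cite{Lov}, after passing to a subsequence we may assume $h_n \to h_\infty$. I would then observe that the set of $M$-podal reduced graphons is closed: it is the image of the compact parameter domain $\{(c_1,\dots,c_M,(g_{ij})) : c_i \ge 0,\ \sum_i c_i = 1,\ g_{ij}=g_{ji}\in[0,1]\}$ under the continuous map sending parameters to the corresponding step graphon, and the continuous image of a compact set in a Hausdorff space is closed. Allowing some $c_i = 0$ lets the limit degenerate to fewer than $M$ blocks (or lets adjacent blocks merge), which is still $M$-podal in the degenerate sense. Hence $h_\infty$ is $M$-podal. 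Moreover, since convergence in the cut metric implies convergence of all subgraph densities, and since $t(g_\alpha)$ depends polynomially—hence continuously—on $\alpha$ with $t(g_0) = t(g_T)$, we obtain $t(h_\infty) = \lim_n t(g_{\alpha_n}) = t(g_T)$.

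It remains to reach the contradiction via finite forcing. Because $h_\infty$ matches $g_T$ in all four densities $t_1, t_2, \tilde t_3, \tilde t_4$, it matches $g_T$ in the two forcing quantities as well: $\zeta_2(h_\infty) = t_Q(h_\infty) = t_1^2 - 2\tilde t_3 + \tilde t_4 = 0$ and $\zeta_1(h_\infty) = t_1(h_\infty) - t_2(h_\infty) - \tfrac16 = (\tfrac12 - \tfrac13) - \tfrac16 = 0$. By the finite forcibility result quoted from \cite{LS3}, $g_T$ is the unique reduced graphon with $t_Q = 0$ and $t_1 - t_2 = \tfrac16$, so $h_\infty = g_T$ up to rearrangement of vertices. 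But $g_T$ has degree function $d(x) = \int_0^1 g_T(x,y)\,dy = x$, which takes infinitely many values, whereas any $M$-podal graphon has a degree function taking at most $M$ values; thus $g_T$ is not $M$-podal. This contradicts the $M$-podality of $h_\infty$ established above, and the theorem follows.

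The main obstacle is the closedness of the $M$-podal class under cut-metric limits: one must check that the parametrization map into reduced-graphon space is genuinely continuous and that all degeneracies (blocks shrinking to measure zero, or two blocks with identical interaction profiles coalescing) only lower the podality rather than escaping the $M$-podal class. Once this is secured, the contradiction is immediate, since a step graphon can never coincide with the continuously-varying $g_T$ forced by the density constraints.
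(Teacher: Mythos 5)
Your proof is correct and follows essentially the same route as the paper's: argue by contradiction, use compactness to extract a cut-metric limit of the putative $M$-podal graphons, note that densities pass to the limit so the limit has the densities of $g_T$, and invoke finite forcibility to conclude the limit is $g_T$, which is not multipodal. The only differences are ones of detail—you prove compactness of the $M$-podal class via closedness inside the compact space of reduced graphons and explicitly verify the forcing quantities $\zeta_1,\zeta_2$ and the degree-function obstruction, steps the paper leaves implicit.
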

\begin{proof} Suppose otherwise. Then we could find $M$-podal graphons
  for arbitrarily small $\alpha$. Since the space of $M$-podal (or smaller)
  graphons is compact, we can find a subsequence that converges to
  $g_0$ as $\alpha \to 0$. But densities vary continuously with the
  graphon, being simply integrals of products of $g$'s. So the
  densities of the $M$-podal graphon $g_0$ are the same as the
  densities of $g_T$. But that is a contradiction, since $g_T$ was
  finitely forced.
\end{proof}
Therefore, we know that $M \to \infty$ as $\alpha \to 0$. The fact
that the densities are polynomials in $g$ (of degree at most 4)
suggests that the growth should be at least a power law.

We performed two sets of numerical simulations. In the first set of simulations, we enforce the densities $(t_1, t_2, \tilde t_3, \tilde t_4)$ by solving the following minimization problem for some $\alpha$:
\begin{equation}
\max_{\{c_j\}_{1\le j\le K}, \{g_{i,j}\}_{1\le i,j\le K}} \S(g), \quad \mbox{subject to:}\quad t(g)=\boldsymbol (\tau_1,\tau_2,\tilde \tau_3,\tilde \tau_4), \quad \dsum_{1\le j\le K}c_j=1, \quad g_{ij}=g_{ji}.
\end{equation}
For values of $\alpha\in[0.001,0.5)$ we get multipodal maximizers
with a small number of podes. To be precise we obtain, numerically,
3-podal maximizers for $\alpha$ values in $(0.02, 0.5)$, 4-podal
maximizers for $\alpha$ values in $(0.004, 0.020)$, and 5-podal
maximizers for $\alpha$ values in $(0.001, 0.004)$. The transition
from 3-podal to 4-podal occurs around $\alpha=0.02$, and the
transition from 4-podal to 5-podal occurs around  $\alpha=0.004$; see the top row of Fig.~\ref{FIG:FinitelyForced} for typical 3-, 4- and 5-podal maximizers we obtained in this case.

In the second set of simulations, we solve a similar minimization problem that enforce the constaints on $\zeta_1$ and $\zeta_2$, instead of the four densities. For $\alpha$ values in $[0.001,0.5)$ we again get multipodal maximizers with a small number of podes. Precisely, we obtain 2-podal maximizers for $\alpha$ values in $(0.04, 0.5)$, 3-podal
maximizers for $\alpha$ values in $(0.015, 0.040)$, and 4-podal
maximizers for $\alpha$ values in $(0.001, 0.015)$. The transition
from 2-podal to 3-podal occurs around $\alpha=0.04$, and the
transition from 3-podal to 4-podal occurs around  $\alpha=0.015$; see the bottom row of Fig.~\ref{FIG:FinitelyForced} for typical 2-, 3- and 4-podal maximizers we obtained in this case.

Overall, our numerical simulations show that in a large fraction of the profile the maximizing graphons are multipodal with a small number of podes. The simulations also demonstrate that $M$ increases as $\alpha$ decreases. However, the numerical evidences are far from conclusive in the sense that we are not able to push $\alpha$ small enough to see the
(necessary) blow up behavior of $M$ more precisely, let alone the nature of the optimizing graphon as that occurs.
\begin{figure}[!ht]
\centering
\includegraphics[angle=0,width=0.25\textwidth]{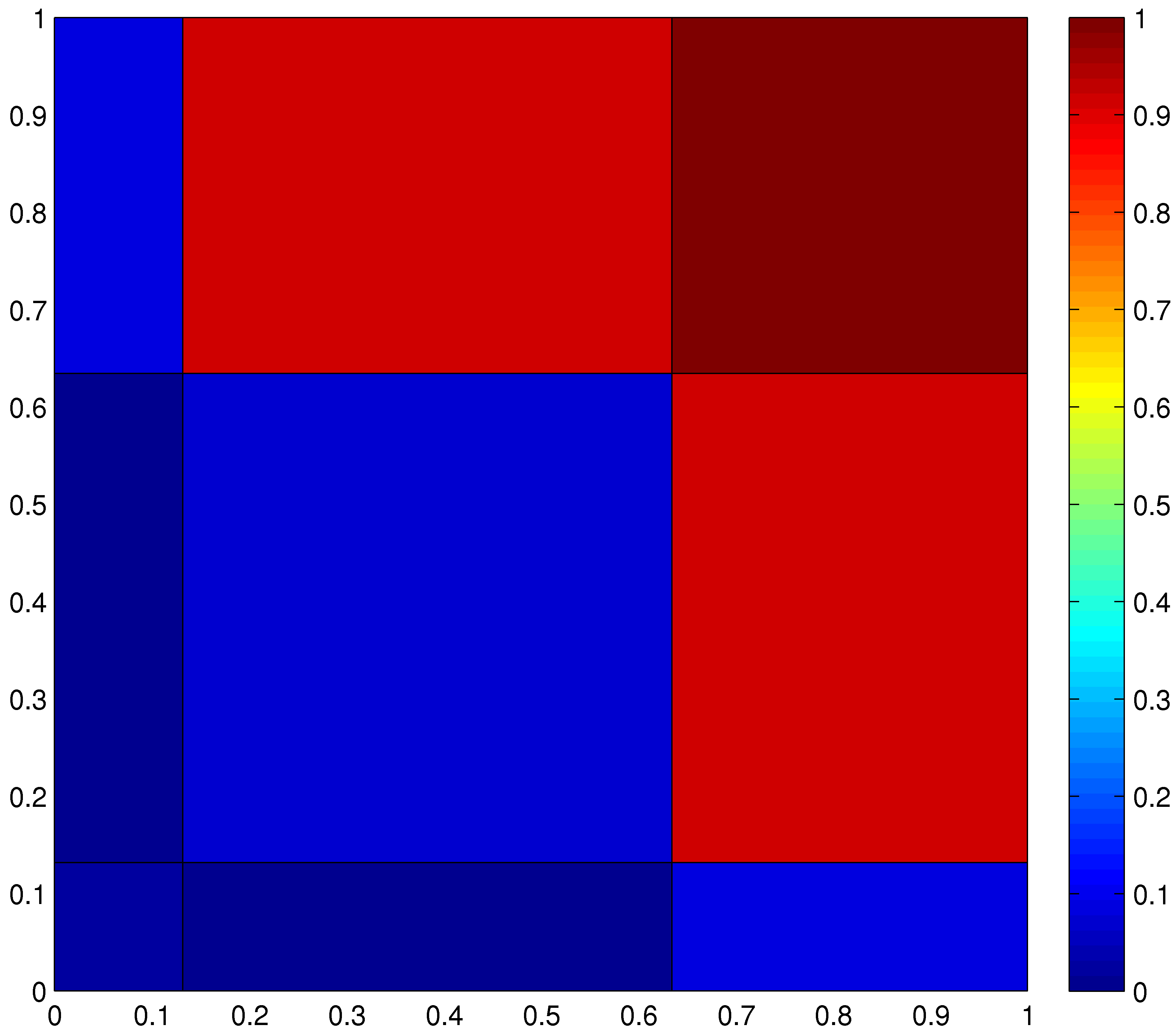}
\includegraphics[angle=0,width=0.25\textwidth]{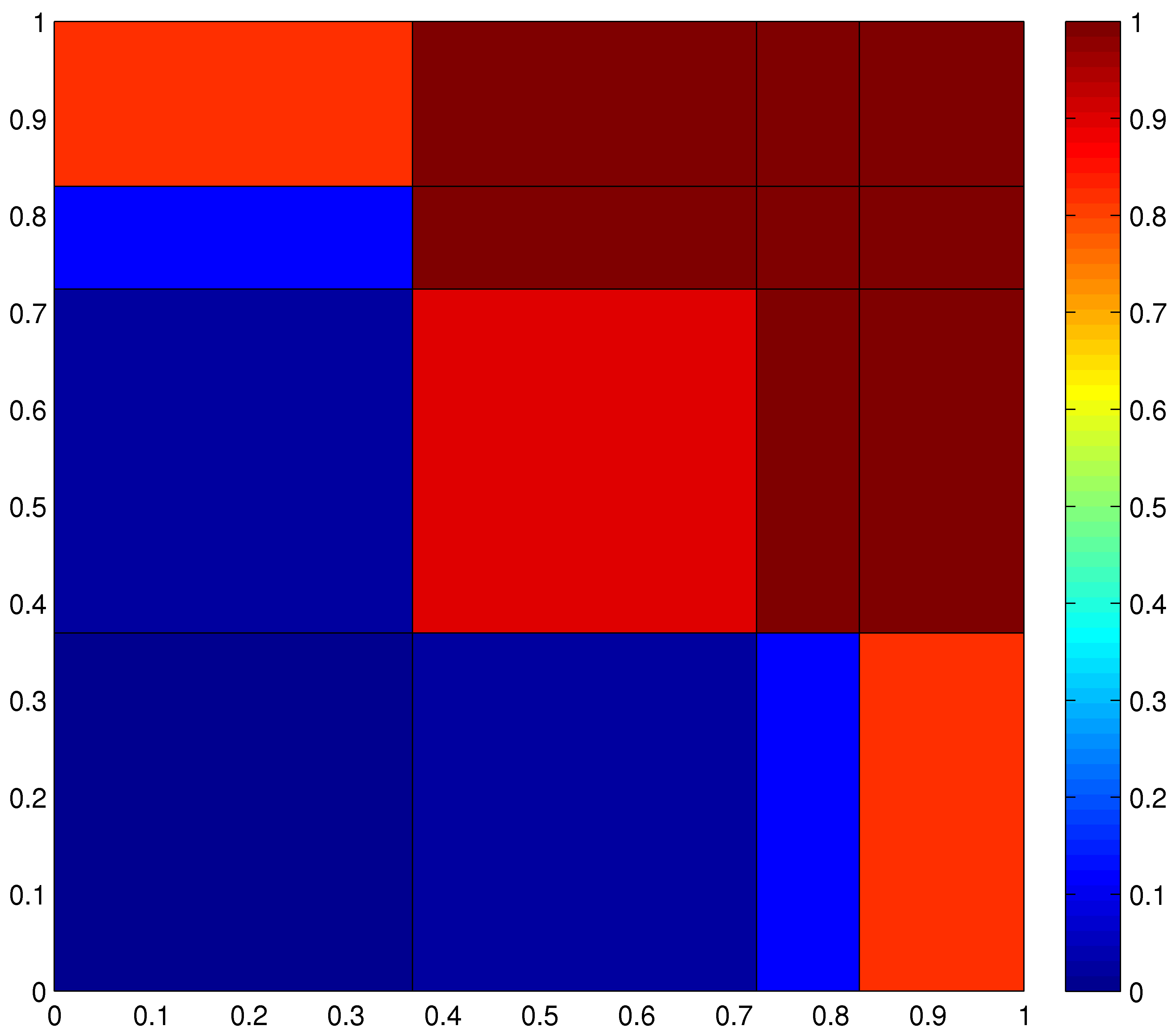}
\includegraphics[angle=0,width=0.25\textwidth]{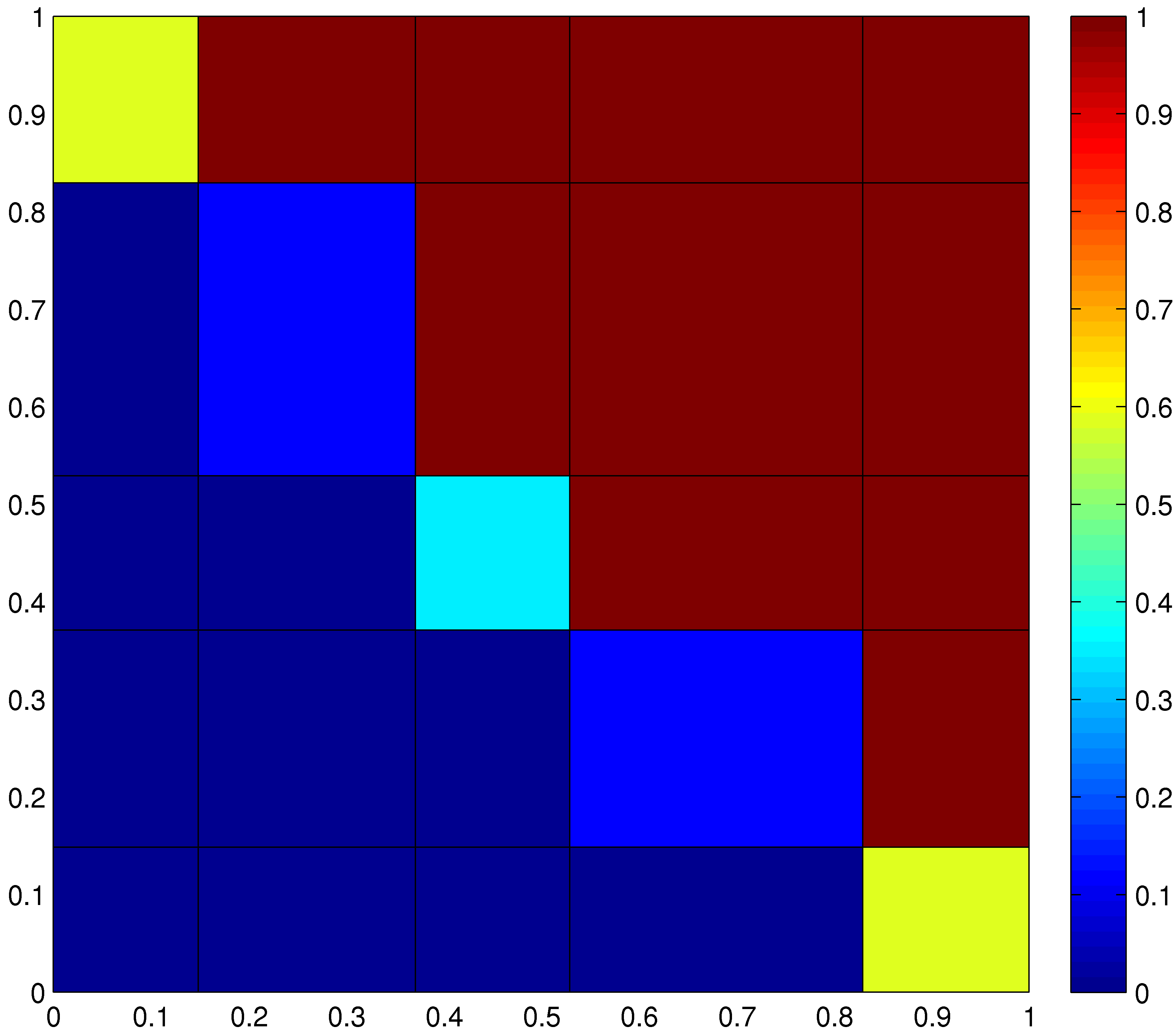}\\
\includegraphics[angle=0,width=0.25\textwidth]{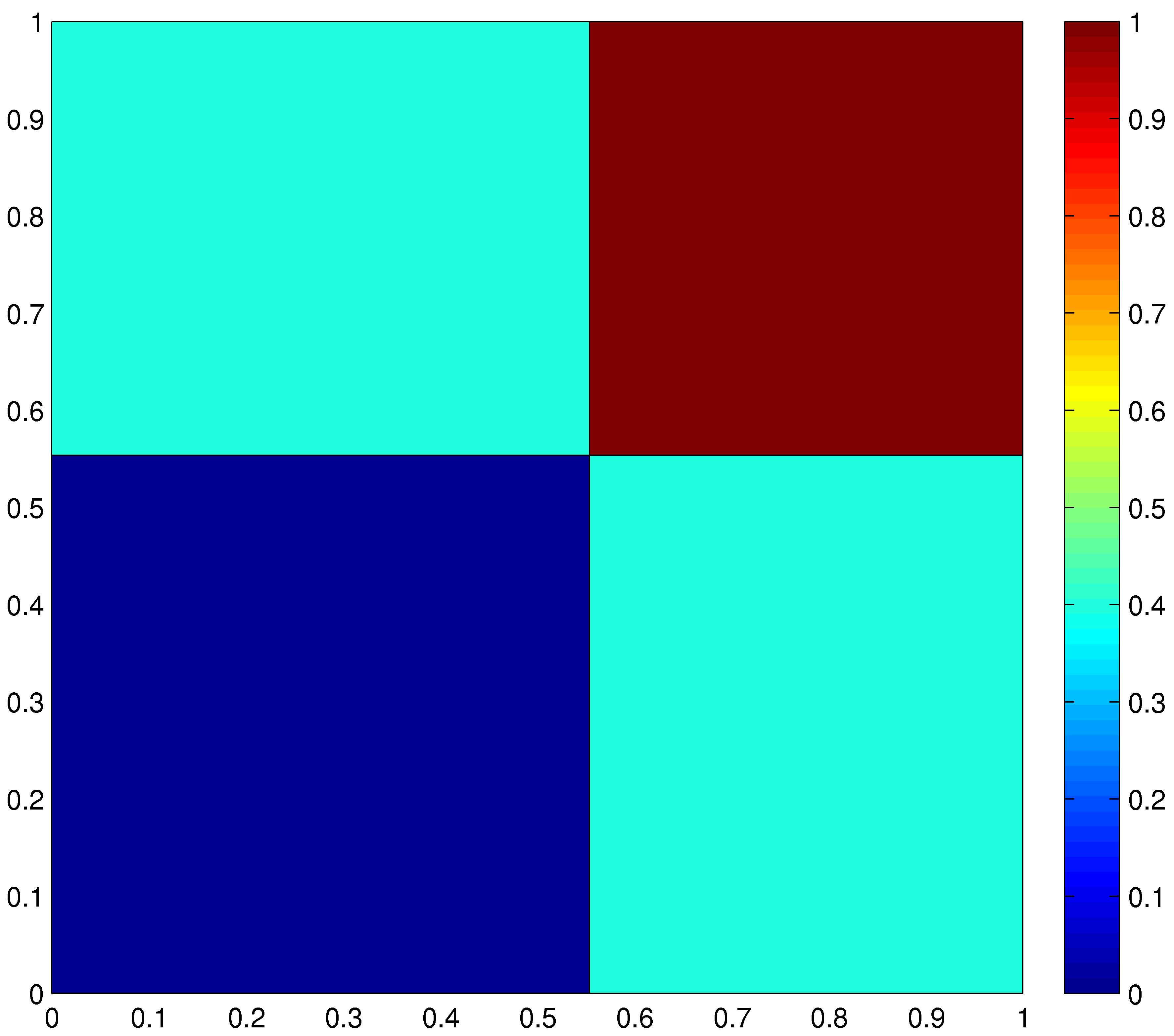}
\includegraphics[angle=0,width=0.25\textwidth]{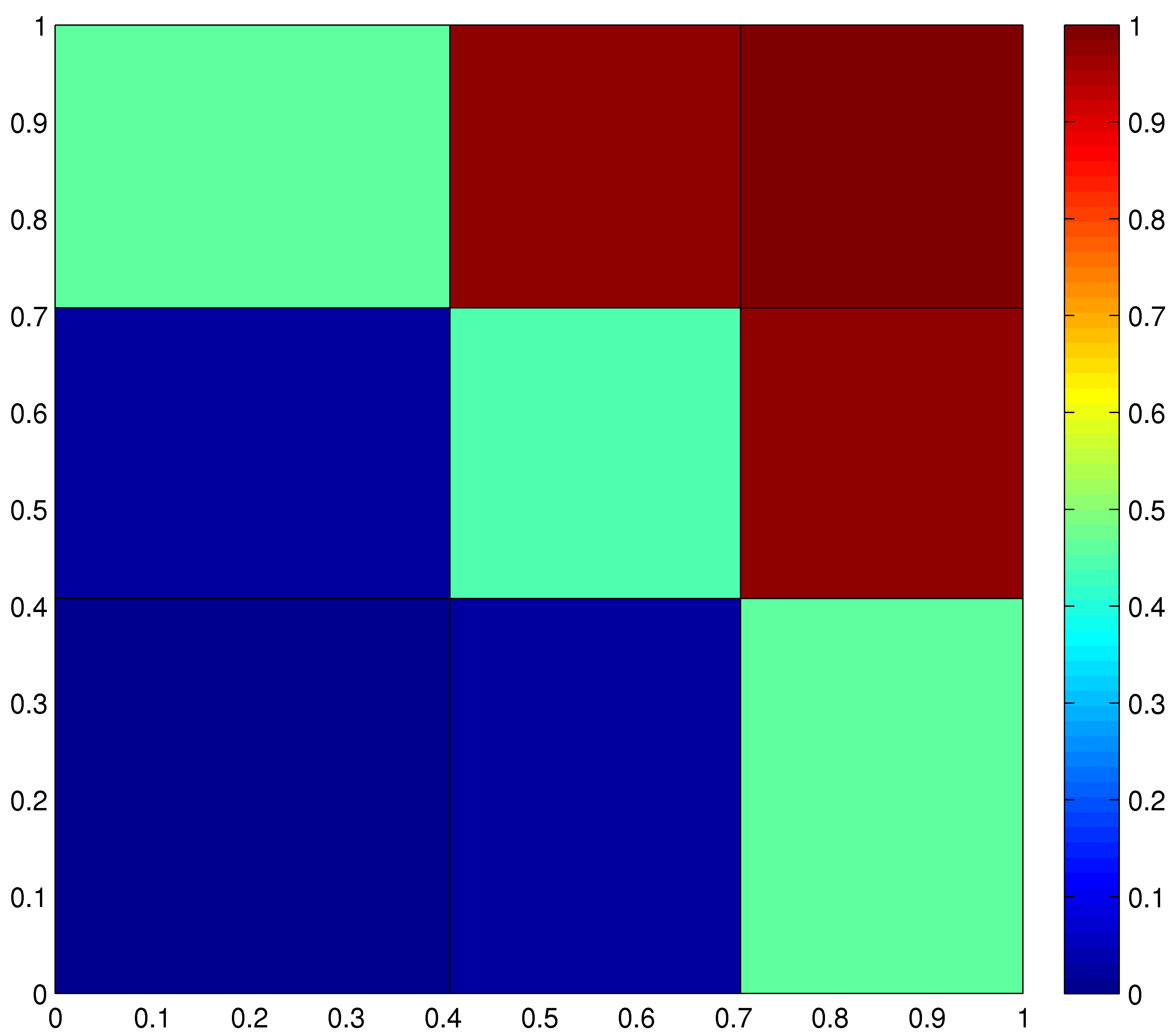}
\includegraphics[angle=0,width=0.25\textwidth]{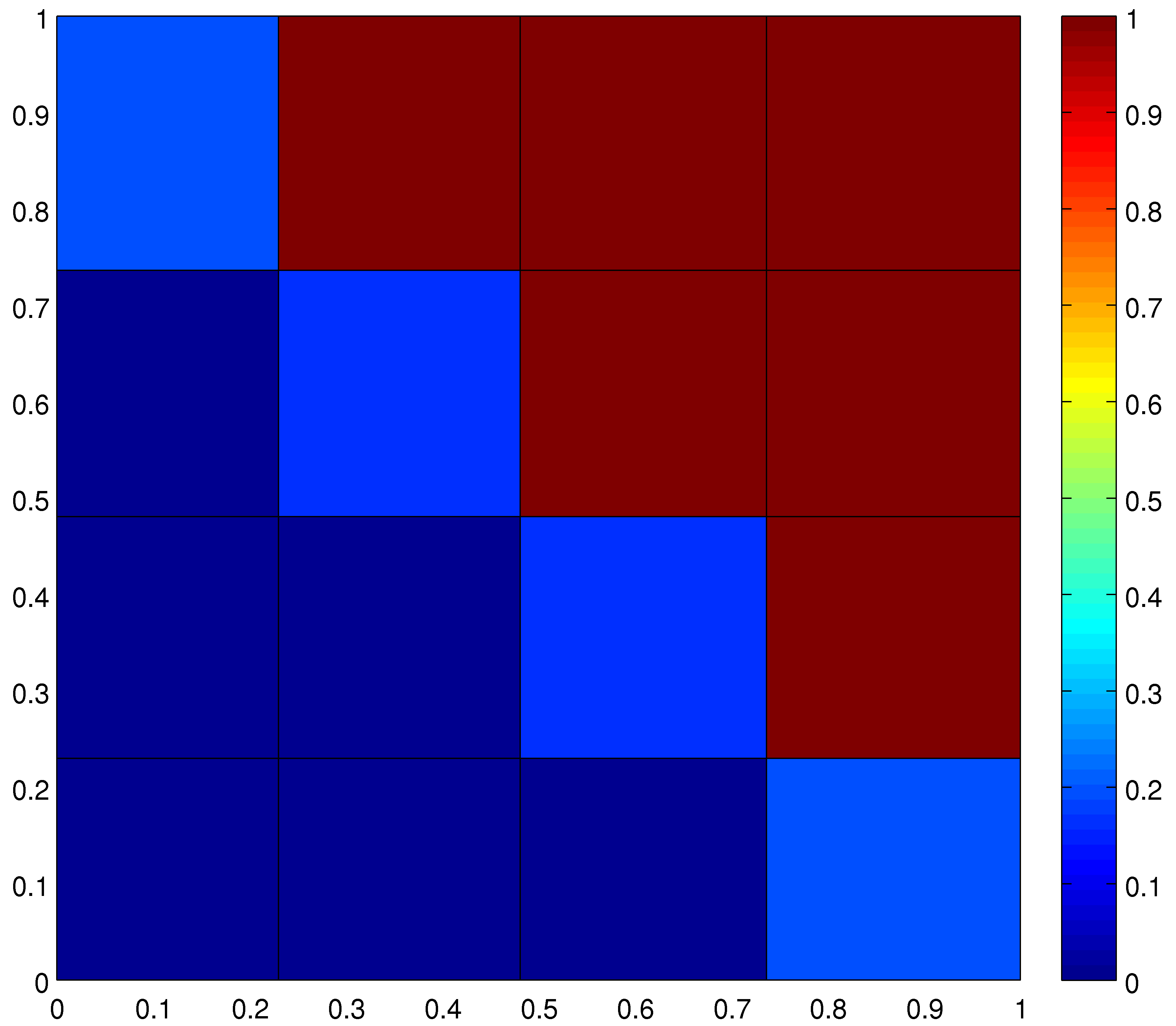}
\caption{Maximizing graphons for the finitely forced model. Top row: maximizers with $(t_1, t_2, \tilde t_3, \tilde t_4)$ constraints at $\alpha=0.0200$ (left, 3-podal), $\alpha=0.0050$ (middle, 4-podal) and $\alpha=0.0015$ (right, 5-podal); Bottom row: maximizers with $(\zeta_1, \zeta_2)$ constraints at $\alpha=0.0200$ (left, 2-podal), $\alpha=0.0050$ (middle, 3-podal) and $\alpha=0.0015$ (right, 4-podal).}
\label{FIG:FinitelyForced}
\end{figure}

\section{Conclusion}
\label{SEC:concl}

We first compare our results with exponential random graph models
(ERGMs), also based on given subgraph densities; see
\cite{CD,RY,AR2,LZ,Y,YRF,AZ} for previous mathematical work on their
asymptotics. For this we contrast
the basic optimization problems underlying ERGMs and the models of
this paper.

Intuitively the randomness in such random graph models arises, in
modeling large networks, by starting with an assumption that a
certain set of subgraphs $H=(H_1,\ldots,H_\ell)$ are `significant' for
the networks; one can then try to understand a large network as a
`typical' one for certain values $t_H(g)=(t_{H_1},\ldots,t_{H_\ell})$ of
the densities $t_{H_j}$ of those subgraphs.  Large deviations theory \cite{CV}
can then give
probabilistic descriptions of such typical graphs through a
variational principle for the constrained Shannon entropy,
$s_\T=\sup_{g|t_H(g)=\T} \S(g)$.

In this paper, as in \cite{RS1,RS2,RRS}, we use such constrained
optimization of entropy, and by analogy with statistical mechanics we
call such models `microcanonical'. In contrast, ERGMs
are analogues of `grand canonical' models of statistical
mechanics.  As noted in Section~\ref{SEC:notation},
our microcanonical version consists of maximizing $\S(g)$ over
graphons $g$ with fixed values $t_H(g)=\T$, leading to a 
constrained-maximum entropy $s_\T=\sup_{g|t_H(g)=\T}\S(g)$.  The optimizing graphons 
satisfy the
Euler-Lagrange variational equation, $\delta[\S(g)+\beta\cdot t_H(g)]=0$,
together with the constraints $t_H(g)=\T$, 
for some set of Lagrange multipliers $\beta=(\beta_1,\ldots,\beta_\ell)$. 

For the ERGM (grand canonical) approach, instead of fixing $t_H(g)$ one
\emph{maximizes} $\tilde F(g) =\S(g)+\beta\cdot t_H(g)$ for fixed $\beta$, obtaining
\begin{equation}
F_\beta=\sup_g \tilde F(g)=\sup_g[\S(g) +\beta\cdot t_H(g)].
\end{equation}

It is typical for there to be a loss of information in the 
grand canonical modelling of large graphs.
One way to see the loss is by comparing the
parameter (``phase'') space $\Sigma_{mc}=\{\T\}$ of the microcanonical
model with that for the grand canonical model,
$\Sigma_{gc}=\{\beta\}$. For each
point $\beta$ of $\Sigma_{gc}$ there are optimizing graphons $\tilde
g_{_{\beta}}$ such that $\tilde F(\tilde g_{_{\beta}})=F_{\beta}$, and for each
point $\T$ of $\Sigma_{mc}$ there are optimizing graphons $\tilde
g_{\T}$ such that $t_H(\tilde g_{\T})=\T$ and 
$\S(\tilde g_{\T})=s_{\T}$. Defining $\T'$ as $t_H(\tilde g_{_{\beta}})$ it follows
that $\tilde g_{_{\beta}}$ maximizes $\S(g)$ under some constraint $\T$, namely
$\S(\tilde g_{_{\beta}})=s_{\T'}$. But the converse fails: there are some
$\T$ for which no optimizing $\tilde g_{_{\beta}}$ satisfies $t_H(\tilde
g_{_{\beta}})=\T$ \cite{CD, RS1}. 

This asymmetry is particularly acute for the $k$-star
models we discuss in this paper: it follows from \cite{CD} that all of $\Sigma_{gc}$ is
represented only on the lower boundary curve of $\Sigma_{mc}$, $\Tk=\E^k$:
see Fig.~\ref{FIG:Phase-Boundary}. If one is
interested in the influence of certain subgraph densities 
in a large network it 
is therefore preferable to use constrained optimization of entropy rather than to
use the ERGM approach. 

Finally, in trying to understand the `phase transition' in the ERGM
edge/triangle model it seems significant that the functional
derivatives of the densities $\delta t/\delta g$ are linearly
dependent at the optimizing (constant) graphons relevant to that
transition. This was quite relevant in the perturbative analysis along
the Erd\H{o}s-R\'enyi curve in the microcanonical edge/triangle model
\cite{RS1}. And of course when the $\delta t/\delta g$ are linearly
dependent they cannot play their usual role as coefficients in the
expansion of the entropy $s_\T$. 

Next we consider the role of multipodal states in modeling large
graphs. In \cite{RS1,RS2,RRS} evidence, but not proof, of multipodal
entropy optimizers was found throughout the phase space of the
microcanonical edge/triangle model, and in this paper we have proven this to hold
throughout the phase space of all $k$-star models.  Consider more
general microcanonical graph models with constraints on edge density,
$e(g)$, and the densities $t_H(g)$ of a finite number of other
subgraphs, $H$. We are interested in the generality of multipodality
for entropy maximizing graphons in such models. As noted in the
Introduction there are known examples (in some sense `rare': see
Theorem 7.12 in \cite{LS3}) with nonmultipodality on phase space
boundaries, but this is not known to occur in the interior of any
phase space. 

To pursue this we first note
a superficial similarity between the subject of extremal
graphs and the older subject of `densest packings of bodies': Given a
finite collection $\C$ of bodies $\{B_1,\ldots,B_\ell\}$ in $\R^d$ determine
those nonoverlapping arrangements, of unlimited numbers of congruent
copies of the $B$'s, which maximize the fraction of $\R^d$ covered by
the $B$'s. (See \cite{Fej} for an overview.) As in extremal graph theory few examples have
been solved, the main ones being congruent spheres for dimensions $d\le 3$ and those
bodies which can tile space, such as congruent regular hexagons in the
plane. Based on this limited experience the assumption/expectation 
developed that for every
collection $\C$ there would be 
a `crystalline' densest packing, a packing whose
symmetry group was small (cocompact) in the group of symmetries of $\R^d$. 
This assumption was
proven incorrect in 1966 by the construction of `aperiodic tilings';
see \cite{Sen, Ra} for an overview.  One can therefore draw a parallel 
between aperiodic
`counterexamples' in the study of densest packings, and nonmultipodal
`counterexamples' in extremal graph theory.
Using nonoverlapping bodies to model molecules, physicists have applied
the formalism of statistical mechanics to packings of bodies. Packings
of spheres then give rise to the `hard sphere model' which is a simple
model for which simulation (not proof) shows the emergence of a
crystalline phase in the interior of the phase space \cite{Low}. More recently, aperiodic tilings have been
used to model quasicrystalline phases of matter. (See \cite{J} for an
introductory guide to quasicrystals.) Although it is
expected that the tilings, corresponding to optimal density on the
boundary of the microcanonical phase space, give rise to an emergent
quasicrystalline phase in the interior, there is much less simulation
evidence of this, as yet, than for crystalline phases emerging from
crystalline sphere packings; 
see \cite{AR1} and references
therein. 

Getting back to networks we note that
simple constraints give rise to multipodal optimal graphs on the
boundary of the phase space, and also \cite{RS1, RS2,RRS} multipodal
phases in the interior, in parallel to the crystalline situation in
packing. By analogy with packing therefore, a 
natural question is: do the nonmultipodal
`counterexamples' on the phase space boundary of random graph models give rise to
nonmultipodal \emph{phases} in the interior of the phase space, in parallel
to aperiodic tilings and quasicrystalline phases? (As in 
statistical mechanics a phase is defined as a connected open subset of the 
phase space of the model, in which the
entropy is analytic; see \cite{RS1}.)

\noindent {\bf Question 1.} Are random graph phases always multipodal?

Our attempt to investigate this in Section \ref{finitely_forced}
  was inconclusive.


Multipodality is a useful tool in
understanding phases. For instance in the edge/triangle model
[RS1, RS2, RRS] even a cursory inspection of the largest values of
such an optimizing graphon concentrates attention on the conditions under which
edges tend to clump together (fluid-like behavior) or push apart into
segregated patterns (solid-like behavior). 
More specifically, 
we note that in simulations of the edge/triangle model \cite{RRS} it
is very noticeable that at densities above the Erd\H{o}s-R\'enyi curve the
optimizing graphons are always monotone, while this is rarely if ever the case
below the curve. We proved in this paper that in $k$-star models, for which
densities are \emph{always} above the ER curve, the
optimizing graphons are always monotone. Consider a general model with
two densities, edges and some graph $H$. The ER curve is $\T_H=\E^k$ 
where $k$ is the number of edges in $H$. A natural question is:

\noindent {\bf Question 2.} Are the optimizing graphons always 
monotone above the ER curve in such random graph models?

In equilibrium statistical
mechanics \cite{Ru} one can rarely understand directly the equilibrium
distribution in a useful way, at least away from extreme values of
energy or pressure, so one determines the basic characteristics of a
model by estimating order parameters or other secondary quantities. In
random graph models multipodal structure of the optimizing state gives
hope for a more direct understanding of the emergent properties of
a model. This would be a significant shift of viewpoint. 
\section*{Acknowledgments}
The authors gratefully acknowledge useful discussions with Mei Yin
and references from Miki Simonovits, Oleg Pikhurko and Daniel Kr\'{a}l'.
The computational codes involved in this research were developed and
debugged on the computational cluster of the Mathematics Department of
UT Austin. The main computational results were obtained on the
computational facilities in the Texas Super Computing Center
(TACC). We gratefully acknowledge this computational support.
R. Kenyon was partially supported by the Simons Foundation.
This work was also partially supported by NSF grants DMS-1208191, DMS-1208941,
DMS-1321018 and DMS-1101326.



\begin{thebibliography}{AR}

\bibitem[AK]{AK} R. Ahlswede and G.O.H. Katona, Graphs with maximal number of adjacent
pairs of edges, Acta Math. Acad. Sci. Hungar. 32 (1978) 97-120

\bibitem[AR1]{AR1} D. Aristoff and C. Radin, First order phase transition 
in a model of quasicrystals, J. Phys. A: Math. Theor. 44(2011), 255001.

\bibitem[AR2]{AR2} D. Aristoff and C. Radin, Emergent structures in large 
networks, J. Appl. Probab. 50 (2013) 883-888. 

\bibitem[AZ]{AZ} D. Aristoff and L. Zhu, On the phase transition curve 
in a directed exponential random graph model, arxiv:1404.6514 

\bibitem[B]{B} {B. Bollobas}, Extremal graph theory, Dover
  Publications, New York, 2004.

\bibitem[BCL]{BCL} {C. Borgs, J. Chayes and L. Lov\'{a}sz}, Moments of
  two-variable functions and the uniqueness of graph limits,
  Geom. Funct. Anal. 19 (2010) 1597-1619.

\bibitem[BCLSV]{BCLSV} {C. Borgs, J. Chayes, L. Lov\'{a}sz, V.T. S\'os
    and K. Vesztergombi}, Convergent graph sequences I: subgraph
  frequencies, metric properties, and testing, { Adv. Math.} { 219}
  (2008) 1801-1851.

\bibitem[CD]{CD} S. Chatterjee and P. Diaconis, Estimating and understanding
exponential random graph models, Ann. Statist. 41 (2013) 2428-2461.

\bibitem[CDS]{CDS} S. Chatterjee, P. Diaconis and A. Sly, 
Random graphs with a given degree sequence, Ann. Appl. Probab. 21 (2011) 1400-1435.

\bibitem[CV]{CV} S. Chatterjee and S.R.S. Varadhan, The large deviation principle
for the Erd\H{o}s-R\'{e}nyi random graph, Eur. J. Comb. 32 (2011)
1000-1017.

\bibitem[Fej]{Fej} L. Fejes T\'oth, { Regular Figures},
  Macmillan, New York, 1964.


\bibitem[J]{J} C. Janot, {Quasicrystals: A primer},
Oxford University Press, Oxford, 1997.

\bibitem[Lov]{Lov} L. Lov\'asz, Large networks and graph limits,
American Mathematical Society, Providence, 2012. 

\bibitem[Low]{Low} H. L\"owen, Fun with hard spheres, In: ``Spatial Statistics and
Statistical Physics'', edited by K. Mecke and D. Stoyan, Springer
Lecture Notes in Physics, volume 554, pages 295--331, Berlin, 2000.

\bibitem[LS1]{LS1} {L. Lov\'{a}sz  and  B. Szegedy},
Limits of dense graph sequences, 
{ J. Combin. Theory Ser. B} { 98} (2006)  933-957.

\bibitem[LS2]{LS2} {L. Lov\'{a}sz  and  B. Szegedy},
Szemer\'edi's lemma for the analyst,
{ GAFA} { 17} (2007)  252-270.

\bibitem[LS3]{LS3} {L. Lov\'{a}sz  and  B. Szegedy},
Finitely forcible graphons,
{ J. Combin. Theory Ser. B} {101} (2011) 269-301.

\bibitem[LZ]{LZ}
E. Lubetzky and Y. Zhao,
On replica symmetry of large deviations in random graphs,
{Random Structures and Algorithms} {47} (2015) 109--146. 

\bibitem[N]{N} M.E.J. Newman, Networks: an Introduction, Oxford
  University Press, 2010.




\bibitem[P]{P} O. Pikhurko, private communication.

\bibitem[R]{R} C. Reiher, The clique density theorem, Ann. Math. (to
  appear), arXiv:1212.2454.

\bibitem[Ra]{Ra} C. Radin, Miles of Tiles, Student Mathematical Library,
Vol 1, Amer. Math. Soc., Providence, 1999.

\bibitem[RRS]{RRS} C. Radin, K. Ren and L. Sadun, The asymptotics of large constrained
graphs, J. Phys. A: Math. Theor. 47 (2014) 175001.

\bibitem[RS1]{RS1} C. Radin and L. Sadun, 
Phase transitions in a complex network, J. Phys. A: Math. Theor. 46 (2013) 305002.

\bibitem[RS2]{RS2}
C. Radin and L. Sadun, Singularities in the entropy of
  asymptotically large simple graphs, 
J. Stat. Phys. 158 (2015) 853--865. 

\bibitem[Ru]{Ru}
{D. Ruelle} 
{Statistical Mechanics; Rigorous Results},
Benjamin, New York, 1969.

\bibitem[RY]{RY} C. Radin and M. Yin, Phase transitions in exponential random
graphs, Ann. Appl. Probab. 23 (2013) 2458-2471.

\bibitem[Sen]{Sen} M. Senechal, {Quasicrystals and geometry}, Cambridge University
Press, Cambridge, 1995.

\bibitem[TET]{TET} H. Touchette, R.S. Ellis and B. Turkington, Physica
  A 340 (2004) 138-146.

\bibitem[Y]{Y} M. Yin, Critical phenomena in exponential random graphs,
J. Stat. Phys. 153 (2013) 1008-1021.

\bibitem[YRF]{YRF} M. Yin, A. Rinaldo and S. Fadnavis,
Asymptotic quantization of exponential random graphs, 
Ann. Appl. Probab. (to appear), arXiv:1311.1738 (2013).

\end{thebibliography}


\end{document}